\makeatletter \@addtoreset{equation}{section} \makeatother
\newtheorem{theorem}{Theorem}[section]
\newtheorem{definition}{Definition}[section]
\newtheorem{proposition}{Proposition}[section]
\newtheorem{lemma}{Lemma}[section]
\newtheorem{remark}{Remark}[section]
\newtheorem{corollary}[theorem]{Corollary}
\begin{document}
\title{Existence and multiplicity of solutions for fractional Schr\"{o}dinger-Kirchhoff equations with  Trudinger-Moser nonlinearity}
\author{Mingqi Xiang$^{a}$,
\ Binlin Zhang$^{b, }$\footnote{Corresponding author.
E-mail addresses:\,  xiangmingqi\_hit@163.com (M. Xiang), zhangbinlin2012@163.com (B. Zhang), 
dusan.repovs@guest.arnes.si (D. Repov\v{s})}\ \ 
and Du\v{s}an Repov\v{s}$^{c}$ \\ 
 \vspace{-0.1cm}
\footnotesize $^a$ College of Science, Civil Aviation University of China, Tianjin,
300300, P.R. China\\
 \vspace{-0.1cm}
\footnotesize $^b$ Department of Mathematics, Heilongjiang Institute of Technology, Harbin,
150050, P.R. China\\
\footnotesize $^c$ Faculty of Education and Faculty of Mathematics and Physics,
University of Ljubljana, 1000,
Slovenia}
\date{} 
\maketitle 
\vspace{-1cm}
\begin{abstract}
We study the existence and multiplicity of  solutions for a class of fractional Schr\"{o}dinger--Kirchhoff type equations with the Trudinger--Moser nonlinearity. More precisely, we consider
\begin{gather*}
\begin{cases}
M\big(\|u\|^{N/s}\big)\left[(-\Delta)^s_{N/s}u+V(x)|u|^{\frac{N}{s}-1}u\right]= f(x,u)
+\lambda  h(x)|u|^{p-2}u \ &{\rm in}\ \mathbb{R}^N,\\
\|u\|=\left(\iint_{\mathbb{R}^{2N}}\frac{|u(x)-u(y)|^{N/s}}{|x-y|^{2N}}dxdy+\int_{\mathbb{R}^N}V(x)|u|^{N/s}dx\right)^{s/N},
\end{cases}\end{gather*}
where $M:[0,\infty]\rightarrow [0,\infty)$ is a continuous function, $s\in (0,1)$, $N\geq2$, $\lambda>0$ is a parameter, $1<p<\infty$, $(-\Delta )^s_{N/s}$ is the fractional $N/s$--Laplacian,  $V:\mathbb{R}^N\rightarrow(0,\infty)$ is a continuous function, $f:\mathbb{R}^N\times\mathbb{R}\rightarrow\mathbb{R} $ is a continuous function, and $h:\mathbb{R}^N\rightarrow[0,\infty)$ is a measurable function.
First, using the mountain pass theorem, a nonnegative solution
is obtained when $f$ satisfies exponential growth conditions and $\lambda$
is large enough, and we prove that the solution converges to zero in $W_V^{s,N/s}(\mathbb{R}^N)$ as $\lambda\rightarrow\infty$. Then, using the Ekeland variational principle, a
nonnegative nontrivial solution is obtained when $\lambda$ is small enough, and we show that the solution converges to zero in $W_V^{s,N/s}(\mathbb{R}^N)$ as $\lambda\rightarrow0$. Furthermore, using the genus theory, infinitely many solutions are obtained
when $M$ is a special function and $\lambda$ is small enough.
We note that our paper covers a novel feature of Kirchhoff problems, that is,  the Kirchhoff function $M(0)=0$.\medskip

\emph{\bf Keywords:}  Fractional Schr\"{o}dinger--Kirchhoff equations; Trudinger-Moser inequality; Existence of solutions.\medskip

\emph{\bf 2010 MSC:} 35A15, 35R11,  47G20.
\end{abstract}

\section{Introduction}

Given $s\in(0,1)$ and $N\geq2$, we study the following fractional Schr\"{o}dinger--Kirchhoff type equation:
\begin{equation}\label{eq1}
M\left(\|u\|^{N/s}\right)\left[(-\Delta)^s_{N/s}u+V(x)|u|^{N/s-1}u\right]= f(x,u)
+\lambda  h(x)|u|^{p-2}u\,\, \ \ {\rm in}\ \mathbb{R}^N,
\end{equation}
where
\begin{equation}\label{norm}
\|u\|=\left([u]_{s,N/s}^{N/s}
+\int_{\mathbb{R}^N}V(x)|u|^{N/s}dx\right)^{s/N},\quad
[u]_{s,N/s}=\left(\iint_{\mathbb{R}^{2N}}\frac{|u(x)-u(y)|^{N/s}}{|x-y|^{2N}}dxdy\right)^{{s}/{N}},
\end{equation}
$M:[0,\infty)\rightarrow[0,\infty)$ is a continuous function, $V:\mathbb{R}^N\rightarrow\mathbb{R}^+$ is a scalar
potential, $1<p<\infty$,  $h:\mathbb{R}^N\rightarrow[0,\infty)$ is a weight function,
$f:\mathbb{R}^N\times\mathbb{R}\rightarrow\mathbb{R}$ is a continuous function,
and $(-\Delta )^s_{N/s}$ is the associated fractional $N/s$-Laplace operator which, up to a normalization constant, is defined as
\begin{equation*}
(-\Delta)_{N/s}^s\varphi(x)=2
\lim_{\varepsilon\rightarrow 0^+}\int_{\mathbb{R}^N\setminus B_\varepsilon(x)}\frac{|\varphi(x)-\varphi(y)|^{N/s-2}(\varphi(x)-\varphi(y))}{|x-y|^{2N}}\,dy,\quad x\in\mathbb{R}^{N},
\end{equation*}
on functions $\varphi\in C_0^\infty(\mathbb{R}^N)$.
Hereafter, $B_\varepsilon(x)$ denotes the ball of
$\mathbb{R}^N$ centered at $x\in\mathbb{R}^N$ and
with radius $\varepsilon>0$.

 Equations of the type \eqref{eq1} are important in many fields of
science, notably in continuum mechanics, phase transition phenomena, population dynamics,
minimal surfaces, and anomalous diffusion, since they are a typical outcome of stochastic stabilization of L\'{e}vy processes, see \cite{r2, r3,r-2} and the references therein.
Moreover, such equations and the associated fractional operators allow us to develop a generalization of quantum mechanics and also
to describe the motion of a chain or an array of particles
which are connected by elastic springs, as well as unusual diffusion
processes in turbulent fluid motions and material transports in
fractured media, for more details see
\cite{r3,r7} and the references therein.
Indeed, the nonlocal fractional operators have been extensively studied by several authors in many different cases: bounded and unbounded domains, different behavior of the nonlinearity, etc. In particular, many
works focus on the subcritical and critical growth of the nonlinearity which allows us to treat the problem variationally by using general critical point theory.

This paper was motivated by some works which have  appeared in recent years.
On the one hand, the following nonlinear Schr\"{o}dinger equation
\begin{align}\label{eq1.01}
(-\Delta)^s u+V(x)u= f(x,u)\quad\mbox{in }\mathbb{R}^N,
\end{align}
was elaborated on by Laskin \cite{r-2} in the framework of quantum mechanics.
Equations of type \eqref{eq1.01}
have been extensively studied, see e.g.
\cite{Chang, MBR,MBRS,PXZ}. To the best of our knowledge, most of the works on fractional Laplacian problems involve the nonlinear terms satisfying polynomial growth, there are only few papers dealing with nonlinear term with exponential growth.

In recent years, some authors have paid considerable attention to the limiting case of the fractional Sobolev embedding, commonly known as the Trudinger-Moser case. In fact, when $N=2$, then $W^{1,2}(\Omega)\hookrightarrow L^r(\Omega)$ for $1\leq r<\infty$,
    but we cannot take $r=\infty$ for such an embedding. To fill this gap, for bounded domains $\Omega$, Trudinger \cite{Tr} proved that
that there exists $\tau>0$ such that $W^{1,2}_0(\Omega)$  is embedded into the Orlicz space $L_{\phi_\tau}(\Omega)$,
 determined by the Young function $\phi_\tau=\mbox{exp}({\tau t^2}-1)$. Afterwards,
 Moser found in \cite{Mo} the best exponent $\tau$ and in particular,
he obtained a result which  is now referred to as the Trudinger-Moser inequality.
For more details about Trudinger-Moser inequalities, we also refer to \cite{TO}.
Next, let us recall some useful results about fractional Trudinger-Moser inequality. Let $\omega_{N-1}$ be the surface area of the unit sphere in $\mathbb{R}^N$ and let $\Omega\subset\mathbb{R}^N$ be a bounded domain. Define $W_0^{s,N/s}(\Omega)$ as the completion of $C_0^\infty(\Omega)$ with respect to the norm $[\cdot]_{s,N/s}$.
In \cite{Martin}, Martinazzi proved that
 there exist positive constants $$\alpha_{N,s}
 =\frac{N}{\omega_{N-1}}\left(\frac{\Gamma((N-s)/2)}{\Gamma(s/2)2^s\pi^{N/2}}\right)^{-\frac{N}{N-s}}
 $$ and $C_{N,s}$ depending only on $N$ and $s$ such that
 \begin{align}\label{TMB}
 \sup_{\substack{u\in W_0^{s,N/s}(\Omega)\\
 [u]_{s,N/s}\leq1}}\int_\Omega
 \exp(\alpha|u|^{\frac{N}{N-s}})dx\leq C_{N,s}|\Omega|,
 \end{align}
 for all $\alpha\in [0,\alpha_{N,s}]$ and there exists $\alpha_{N,s}^*\geq \alpha_{N,s}$ such that the supremum in \eqref{TMB} is $\infty$ for $\alpha>\alpha_{N,s}^*$. However, it remains unknown whether
$
\alpha_{N,s}=\alpha_{N,s}^*.
$
Kozono {\it et al.} in \cite{KSW} proved that for all $\alpha>0$ and $u\in W^{s,N/s}(\mathbb{R}^N)$,
\begin{align*}
\int_{\mathbb{R}^N}\Phi_\alpha(u)dx<\infty,
\end{align*}
where
\begin{align}\label{ex}
\Phi_\alpha(t)=\exp\left(\alpha|t|^{\frac{N}{N-s}}\right)-\sum_{\substack{0\leq j<N/s-1\\ j\in\mathbb{N}}}
\frac{\alpha^j}{j!}|t|^{\frac{jN}{N-s}}.
\end{align}
Moreover, there exist positive constants $\alpha_{N,s}$ and $C_{N,s}$ depending only on $N$ and $s$ such that
\begin{align}\label{TM}
\int_{\mathbb{R}^N}\Phi_\alpha(u)dx\leq C_{N,s},\ \ \forall\alpha\in(0,\alpha_{N,s}),
\end{align}
for all $u\in W^{s,N/s}(\mathbb{R}^N)$ with $[u]_{s,N/s}\leq 1$.

 In the setting of the fractional Laplacian, Iannizzotto and Squassina in \cite{IM}
investigated existence of solutions for the following Dirichlet problem
\begin{align}\label{eq1.02}
\begin{cases}
(-\Delta)^{1/2} u=f(u)\ \ &{\rm in}\ (0,1),\\
u=0\quad \ \ &{\rm in}\  \mathbb{R}\setminus(0,1),
\end{cases}
\end{align}
where $(-\Delta)^{1/2}$ is the fractional Laplacian and $f(u)$ behaves like $\exp(\alpha |u|^2)$ as $u\rightarrow\infty$. Using the mountain pass theorem, they proved the existence of solutions for problem \eqref{eq1.02}. Subsequently,
 Giacomoni, Mishra and Sreenadh in \cite{GMS} studied the multiplicity of solutions for problems like \eqref{eq1.02} by using the Nehari manifold method. For more recent results on problem \eqref{eq1.02} in the higher dimensional case, we refer the interested reader to \cite{PS1}  and the references therein.
For the general fractional $p$--Laplacian in unbounded domains, Souza in \cite{Souza} considered the following nonhomogeneous fractional $p$--Laplacian equation
\begin{align}\label{eq1.03}
(-\Delta)_p^su+V(x)|u|^{p-2}u=f(x,u)+\lambda h\  \ {\rm in}\ \mathbb{R}^N,
\end{align}
where $(-\Delta)_p^s$ is the fractional $p$--Laplacian and the nonlinear term $f$ satisfies exponential growth.  He obtained
a nontrivial weak solution of the equation \eqref{eq1.03} by using fixed point theory. Li and Yang in \cite{LY} studied the following equation
\begin{align*}
(-\Delta)_p^\zeta u+V(x)|u|^{p-2}u=\lambda A(x)
|u|^{q-2}u+f(u)\ \ {\rm in} \ \mathbb{R}^N,
\end{align*}
where $p\geq 2$, $0<\zeta<1$, $1<q<p$, $\lambda>0$ is a real parameter,
$A$ is a positive function in $L^{\frac{p}{p-q}}(\mathbb{R}^N)$, $(-\Delta)_p^\zeta$ is the fractional $p$--Laplacian, and $f$ has exponential growth.

On the  other hand, Li and Yang  in \cite{LY2} studied the following Schr\"{o}dinger-Kirchhoff type
equation
\begin{align}\label{eq1.04}
\left(\int_{\mathbb{R}^N}(|\nabla u|^N+V(x)|u|^N)dx\right)^k(-\Delta_N u
+V(x)|u|^{N-2}u)=\lambda A(x)|u|^{p-2}u+ f(u)\quad\mbox{in }\mathbb{R}^N,
\end{align}
where $\Delta_N u={\rm div}(|\nabla u|^{N-2}\nabla u)$ is the $N$--Laplaician, $k>0$,
$V:\mathbb{R}^N\rightarrow(0,\infty)$ is continuous, $\lambda>0$ is a real parameter,
$A$ is a positive function in $L^{\frac{p}{p-q}}(\mathbb{R}^N)$ and $f$ satisfies exponential growth. By using the mountain pass theorem and Ekeland's variational principle, they obtained
two nontrivial solutions of \eqref{eq1.04} for the parameter $\lambda$ small enough.
Actually, the Kirchhoff--type problems,
which arise in various models of physical and biological systems,
have received a lot of attention in recent years. More precisely,
Kirchhoff established a model governed by the equation
\begin{align}\label{kirchhoff}
\rho\frac{\partial ^2u}{\partial t^2}-\left(
\frac{\rho}{h}+\frac{E}{2L}\int_0^L\left|\frac{\partial u}{\partial x}\right|^2dx\right)
\frac{\partial ^2u}{\partial x^2}=0,
\end{align}
  for all $x\in(0,L),t\geq0$, where $u=u(x,t)$ is the lateral displacement at the coordinate $x$ and the time $t$, $E$ is the Young modulus, $\rho$ is the mass density, $h$ is the cross-section area, $L$ is the length, and
  $\rho_0$ is the initial axial tension. Equation~\eqref{kirchhoff} extends the classical D'Alembert
wave equation by considering the effects of the changes in the length of the
strings during the vibrations.
Recently,  Fiscella and Valdinoci have proposed in \cite{FV} a stationary
Kirchhoff model driven by the fractional Laplacian
by taking into account the nonlocal aspect of the tension,
see \cite[Appendix A]{FV} for more details.
It is worth mentioning that when $s\rightarrow1^{-}$ and $M\equiv1$, problem \eqref{eq1} becomes
\begin{align*}
-\Delta_N u+V(x)|u|^{N-2}u=f(x,u)+\lambda h(x)|u|^{p-2}u,
\end{align*}
which was studied by many authors using variational methods, see for example
\cite{ADF,doMS,GPS, LLu}.

Inspired by the above works, we study in the present paper the existence, multiplicity and
asymptotic behavior of solutions of~\eqref{eq1} and
overcome the lack of compactness due to the presence
of exponential growth terms as well as the degenerate nature
of the Kirchhoff coefficient.
To the best of our knowledge, there are no results for \eqref{eq1}
of such  generality.

Throughout the paper, without explicit mention, we  assume
validity of conditions $(V_1)$, $(V_2)$ and $(\mathcal M)$ below:\smallskip

\noindent $(V_1)$
{\em$V:\mathbb{R}^N\rightarrow\mathbb{R}^+$ is a continuous
function and there exists $V_0>0$ such that
$\displaystyle{\inf_{\mathbb{R}^N}} V(x) \geq V_0$.}
\begin{itemize}
\item[$(V_2)$] {\em There exists $h>0$  such that
\begin{align*}
\lim_{|y|\rightarrow\infty} {\rm {meas}}\left\{x\in B_h(y):V(x)\leq c\right\}=0
\end{align*}
for all $c>0$.}
\end{itemize}
\noindent $(\mathcal M)$
{\em$M:\mathbb{R}^+_0\rightarrow \mathbb{R}^+_0$
is a continuous function satisfying the following properties:}
\begin{itemize}
 \item[$(M_1)$] {\em For any $\tau>0$ there exists $\kappa=\kappa(\tau)>0$
 such that $M(t)\geq\kappa$ for all $t\geq\tau$}.
  \item[$(M_2)$] {\em There exists $\theta\geq1$
  such that $tM(t)\leq \theta\mathscr{M}(t)$ for all $t\in \mathbb{R}^+_0$,
  where $\mathscr{M}(t)=\int_0^t M(\tau)d\tau$}.
\end{itemize}

Note that condition $(V_2)$, which is weaker than the
coercivity assumption $V(x)\rightarrow \infty$ as $|x|\rightarrow \infty$,
was first proposed by  Bartsch and  Wang in~\cite{BW}
to overcome the lack of compactness. The condition $(M_1)$ that means $M(t)>0$ for all $t>0$,
was originally used to get the multiplicity of solutions for a class of higher
order $p(x)$--Kirchhoff equations, see \cite{colpuc} for more details.

A typical example of $M$ is given by $M(t)=a+b\theta\,t^{\theta-1}$
for $t\geq0$, where $a,b\geq0$
and $a+b>0$. When $M$ is of this type, problem \eqref{eq1}
is said to be  degenerate if $a=0$, while it is
called non--degenerate if $a>0$.
Recently, fractional Kirchhoff problems have received more and more attention.
Some new existence results of solutions for fractional
non--degenerate Kirchhoff problems are given, for example, in
\cite{PP, PS, PXZ, XZF1}. For some recent results concerning the degenerate
case of Kirchhoff--type problems, we refer to \cite{AFP, capu, XMTZ, PXZ2, XZF2, XZR}
and the
references therein.
We stress that
the degenerate case is quite interesting and is treated in well--known
papers on Kirchhoff theory, see for example \cite{dAS}.
In the vast literature on degenerate Kirchhoff problems,
the transverse oscillations of a stretched string, with nonlocal
flexural rigidity, depends continuously on the Sobolev deflection
norm of $u$ via $M(\|u\|^2)$. From a physical point of view, the fact
that $M(0)=0$ means  that the base tension of the string is zero, which is a
very realistic model. Clearly, assumptions $(M_1)$--$(M_2)$ cover the degenerate case.

The natural solution space for \eqref{eq1} is
$W_V^{s,N/s}(\mathbb{R}^N)$, that is, the completion of
$C_0^\infty(\mathbb{R}^N)$ with respect to the norm $\|\cdot\|$
introduced in~\eqref{norm}. By \cite{PXZ}, we know that $(W_V^{s,N/s}(\mathbb{R}^N),\|\cdot\|)$ is a reflexive Banach space.
 Furthermore, for all $N/s\leq q<\infty$, the following embeddings
 \begin{align*}
 W_V^{s,N/s}(\mathbb{R}^N)\hookrightarrow
 W^{s,N/s}(\mathbb{R}^N)\hookrightarrow L^q(\mathbb{R}^N)
 \end{align*}
 are continuous, see \cite{r28}. Define
 \begin{align*}
 \lambda_1=\inf\left\{\frac{\|u\|^{N/s}}{\|u\|_{L^{\theta N/s}(\mathbb{R}^N)}^{\theta N/s}}:
 u\in W_V^{s,N/s}(\mathbb{R}^N)\setminus\{0\}\right\}.
 \end{align*}
 Clearly, $\lambda_1>0$.

Throughout the paper we assume that the nonlinear term
{\em $f:\mathbb{R}^N\times\mathbb{R}^+\rightarrow\mathbb{R}$
is a continuous function, with $f(x,t)\equiv 0$ for $t\leq0$
and $x\in\mathbb{R}^N$.}  In the following, we also require the following assumptions $(f_1)$--$(f_3)$:
\begin{itemize}
\item[$(f_1)$] {\it There exists $b_1,b_2>0$ and $0<\alpha_0<\alpha_{N,s}$, such that}
\begin{align*}
|f(x,t)|\leq b_1 t^{\theta N/s-1}+b_2\Phi_{\alpha_0}(t)\quad\mbox{\em for all }
(x,t)\in\mathbb{R}^N\times\mathbb{R}^+.
\end{align*}
where $\Phi_{\alpha}(t)$ is given in \eqref{ex}.
\item[$(f_2)$] {\em There exists  $\mu>\theta N/s$ such that
$$0<\mu F(x,t)\leq f(x,t)t,\quad
F(x,t)=\int_0^tf(x,\tau)d\tau,$$
whenever $x\in\mathbb{R}^N$ and $t\in \mathbb{R}^+$.}
\item[$(f_3)$] {\it The following holds:}
\begin{align*}
\limsup_{t\rightarrow0^+}\frac{F(x,t)}{t^{\theta N/s}}<\frac{s\mathscr{M}(1)}{N}\lambda_1,\ \
{\rm uniformly\ in}\ x\in\mathbb{R}^N.
\end{align*}
\end{itemize}
Note that $(f_3)$ is compatible with the condition $(M_2)$.
A typical example of $f$, satisfying
$(f_1)$--$(f_2)$, is given by $f(x,t)=\Phi_{\alpha_0}(t)+\mathcal{C}_0t^{\theta N/s-1}$, where $\mathcal{C}_0$ is a positive constant.

\medskip

We say that $u\in W_V^{s,N/s}(\mathbb{R}^N)$ is a (weak)
{\em solution} of problem \eqref{eq1}, if
\begin{gather*}
M(\|u\|^{N/s})\left(\langle u,\varphi\rangle_{s,N/s}+\int_{\mathbb{R}^N}V|u|^{N/s-2}u\varphi dx\right)
=\int_{\mathbb{R}^N}(f(x,u)+ \lambda h(x)|u|^{p-2}u)\varphi dx,\\ \langle u,\varphi\rangle_{s,N/s}=\iint_{\mathbb{R}^{2N}}\frac{\big[|u(x)-u(y)|^{N/s-2}(u(x)-u(y))\big]
\cdot\big[\varphi(x)-\varphi(y)\big]}{|x-y|^{2N}} dxdy,
\end{gather*}
for all $\varphi\in W_V^{s,N/s}(\mathbb{R}^N)$.
\medskip

First of all, for the case $N\theta/s<p<\infty$, by using the mountain pass theorem we can obtain the first existence result as follows.

\begin{theorem}\label{th1}
Assume that $V$ satisfies $(V_1)$--$(V_2)$,  $f$ satisfies $(f_1)$--$(f_3)$
and $M$ fulfills $(M_1)$--$(M_2)$. If $0\leq h\in L^\infty(\mathbb{R}^N)$ and $N\theta/s<p<\infty$,
then there exists
$\lambda^*>0$ such that for all $\lambda>\lambda^*$, problem \eqref{eq1}
admits a nontrivial nonnegative mountain pass solution $u_\lambda $ in
$W_V^{s,N/s}(\mathbb{R}^N)$. Moreover, $\lim_{\lambda\rightarrow\infty}\|u_\lambda\|=0$.
\end{theorem}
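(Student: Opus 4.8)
The plan is to apply the mountain pass theorem to the energy functional
$$J_\lambda(u)=\frac{s}{N}\mathscr{M}\big(\|u\|^{N/s}\big)-\int_{\mathbb{R}^N}F(x,u)\,dx-\frac{\lambda}{p}\int_{\mathbb{R}^N}h(x)|u|^p\,dx,\qquad u\in W_V^{s,N/s}(\mathbb{R}^N),$$
whose critical points are exactly the weak solutions of \eqref{eq1}. First I would check that $J_\lambda\in C^1$: the middle term is controlled by $(f_1)$ together with the fractional Trudinger--Moser bound \eqref{TM} (applied to $u/[u]_{s,N/s}$ after rescaling, which is legitimate since $\alpha_0<\alpha_{N,s}$), while $h\in L^\infty(\mathbb{R}^N)$ and the continuous embedding $W_V^{s,N/s}(\mathbb{R}^N)\hookrightarrow L^p(\mathbb{R}^N)$ (valid because $p>N\theta/s\geq N/s$) handle the last term. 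I would also record two preliminary facts used repeatedly: the compact embedding $W_V^{s,N/s}(\mathbb{R}^N)\hookrightarrow\hookrightarrow L^q(\mathbb{R}^N)$ for every $q\in[N/s,\infty)$, which follows from $(V_1)$--$(V_2)$ in the Bartsch--Wang spirit; and the elementary consequences of $(M_2)$, namely that $t\mapsto\mathscr{M}(t)/t^{\theta}$ is nonincreasing, so $\mathscr{M}(t)\geq\mathscr{M}(1)t^{\theta}$ for $0\leq t\leq1$ and $\mathscr{M}(t)\leq\mathscr{M}(1)t^{\theta}$ for $t\geq1$, together with the fact that $(M_1)$ forces $\mathscr{M}(\tau)>0$ for every $\tau>0$ and $\mathscr{M}(t)\to\infty$.

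Next I would verify the mountain pass geometry. For $\|u\|=\rho\leq1$, combining $\mathscr{M}(\|u\|^{N/s})\geq\mathscr{M}(1)\|u\|^{N\theta/s}$ with the estimate of $\int F(x,u)$ coming from $(f_3)$ (the leading coefficient, strictly below $\tfrac{s}{N}\mathscr{M}(1)\lambda_1$) and from $(f_1)$ plus \eqref{TM} (the remainder, of size $O(\rho^{q})$ with $q>N\theta/s$ once $\rho$ is small), and with $\tfrac{\lambda}{p}\int h|u|^p\leq\tfrac{\lambda}{p}\|h\|_\infty c_0\rho^p$, one gets
$$J_\lambda(u)\geq\Big(\tfrac{s}{N}\mathscr{M}(1)-c_1\Big)\rho^{N\theta/s}-c_2\rho^{q}-\tfrac{\lambda}{p}\|h\|_\infty c_0\,\rho^{p-N\theta/s}\rho^{N\theta/s},\qquad c_1<\tfrac{s}{N}\mathscr{M}(1).$$
Since $p>N\theta/s$ and $q>N\theta/s$, choosing $\rho=\rho_\lambda$ small enough (with $\rho_\lambda\to0$ as $\lambda\to\infty$) yields $\inf_{\|u\|=\rho_\lambda}J_\lambda\geq\alpha_\lambda>0$. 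For the mountain top I fix $u_0\in W_V^{s,N/s}(\mathbb{R}^N)$ with $\int h|u_0|^p\,dx>0$; using $\mathscr{M}(t)\leq\mathscr{M}(1)t^{\theta}$ for $t\geq1$, $F\geq0$ (a consequence of $(f_2)$) and $p>N\theta/s$, one finds $J_\lambda(\sigma u_0)\leq A\sigma^{N\theta/s}-B\lambda\sigma^{p}$ for $\sigma$ large, so $J_\lambda(\sigma u_0)\to-\infty$; maximizing the right-hand side over $\sigma$ then shows the mountain pass level $c_\lambda=\inf_{\gamma}\max_{[0,1]}J_\lambda(\gamma(t))$ (infimum over continuous paths joining $0$ to a point where $J_\lambda<0$) satisfies $0<\alpha_\lambda\leq c_\lambda\leq C\lambda^{-N\theta/(sp-N\theta)}\to0$ as $\lambda\to\infty$. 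This decay of $c_\lambda$ is exactly why we must take $\lambda$ large.

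It remains to establish the Palais--Smale condition $(PS)_{c_\lambda}$ for $\lambda$ large, which is the heart of the argument. Given $J_\lambda(u_n)\to c_\lambda$ and $J_\lambda'(u_n)\to0$, boundedness of $\{u_n\}$ follows from the combination $\beta J_\lambda(u_n)-\langle J_\lambda'(u_n),u_n\rangle$ with a multiplier $\beta\in(N\theta/s,\min\{\mu,p\}]$ (possible since both $\mu$ and $p$ exceed $N\theta/s$): by $(M_2)$ the Kirchhoff part is $\geq(\tfrac{\beta s}{N}-\theta)\mathscr{M}(\|u_n\|^{N/s})$ with positive coefficient, by $(f_2)$ one has $\int(f(x,u_n)u_n-\beta F(x,u_n))\geq0$, and since $\beta\leq p$ the term $\lambda(1-\beta/p)\int h|u_n|^p\geq0$; hence $\mathscr{M}(\|u_n\|^{N/s})\lesssim1+\|u_n\|$, which forces $\{\|u_n\|\}$ bounded because $\mathscr{M}(t)\gtrsim t$ for large $t$ and $N/s>1$. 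Applying the same combination to the sequence and using the smallness of $c_\lambda$ shows $\limsup_n\|u_n\|$ is small, in particular $\sup_n[u_n]_{s,N/s}<(\alpha_{N,s}/\alpha_0)^{(N-s)/N}$ for $\lambda$ large; this below-threshold bound and \eqref{TM} with an exponent $q'>1$ close to $1$ give a uniform bound on $\{\Phi_{\alpha_0}(u_n)\}$ in $L^{q'}(\mathbb{R}^N)$, hence on $\{f(x,u_n)\}$ in some $L^{q'}(\mathbb{R}^N)$. Passing to a subsequence with $u_n\rightharpoonup u$ and using the compact embedding, $u_n\to u$ in every $L^{q}$, $q\geq N/s$, and a.e.; a Vitali/generalized dominated convergence argument then yields $\int F(x,u_n)\to\int F(x,u)$, $\int f(x,u_n)(u_n-u)\to0$ and $\int h|u_n|^{p-2}u_n(u_n-u)\to0$. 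Since $c_\lambda>0$, $\{u_n\}$ cannot tend to $0$, so by $(M_1)$ we have $M(\|u_n\|^{N/s})\geq\kappa_0>0$ along the sequence; feeding the above into $\langle J_\lambda'(u_n),u_n-u\rangle\to0$ gives $\langle\mathcal{A}(u_n),u_n-u\rangle\to0$ for the monotone operator $\mathcal{A}$ associated with $(-\Delta)^s_{N/s}+V|\cdot|^{N/s-2}\cdot$, and the $(S_+)$ property of $\mathcal{A}$ then gives $u_n\to u$ strongly in $W_V^{s,N/s}(\mathbb{R}^N)$. Consequently $J_\lambda'(u)=0$, $J_\lambda(u)=c_\lambda>0$ so $u\neq0$, and $u\geq0$ follows by testing \eqref{eq1} with $-u^-$, $u^-=\min\{u,0\}$: the right-hand side is $\leq0$ there because $f(x,\cdot)\equiv0$ on $(-\infty,0]$ and $h\geq0$, while the left-hand side dominates $\kappa_0\|u^-\|^{N/s}$ by a standard algebraic inequality, forcing $u^-\equiv0$. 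Finally, writing $\beta c_\lambda=\beta J_\lambda(u_\lambda)-\langle J_\lambda'(u_\lambda),u_\lambda\rangle\geq(\tfrac{\beta s}{N}-\theta)\mathscr{M}(\|u_\lambda\|^{N/s})$ and letting $\lambda\to\infty$ gives $\mathscr{M}(\|u_\lambda\|^{N/s})\to0$, and since $\mathscr{M}(\tau)>0$ for every $\tau>0$ this forces $\|u_\lambda\|\to0$. The two genuinely delicate points are the below-threshold energy estimate that restores compactness of the exponential nonlinearity (which is what confines the result to large $\lambda$) and the passage to the limit in the nonlocal $N/s$-Laplacian.
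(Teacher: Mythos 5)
Your proposal follows the same strategy as the paper: mountain pass geometry driven by $(f_3)$ and $\mathscr{M}(t)\gtrless\mathscr{M}(1)t^\theta$, the observation that the mountain pass level $c_\lambda\to 0$ as $\lambda\to\infty$, a below-threshold energy bound restoring compactness of the exponential term via Trudinger--Moser, an $(S_+)$ argument for strong convergence of the $(PS)$ sequence, and finally $\mathscr{M}(\|u_\lambda\|^{N/s})\lesssim c_\lambda\to 0$ for the asymptotics. Two points, however, need repair.

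First, your functional carries $\frac{\lambda}{p}\int h|u|^p\,dx$ rather than the $\lambda\int h|u^+|^p\,dx$ the paper uses, and this breaks the nonnegativity step. Testing the Euler equation with $-u^-$ (with your convention $u^-=\min\{u,0\}$, so $-u^-=\max\{-u,0\}\geq 0$), the $f$-term vanishes and the $h$-term is $\lambda\int h|u|^{p-2}u(-u^-)\,dx=-\lambda\int_{\{u<0\}}h|u|^p\,dx\leq 0$, which is fine; but the Kirchhoff/principal part tested against $-u^-$ satisfies $\langle\mathcal{A}(u),-u^-\rangle\leq-[(-u^-)]_{s,N/s}^{N/s}-\|(-u^-)\|_{N/s,V}^{N/s}\leq 0$, not $\geq\kappa_0\|u^-\|^{N/s}$ as you assert. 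The identity then only yields $\kappa_0\|(-u^-)\|^{N/s}\leq\lambda\int_{\{u<0\}}h|u|^p\,dx$, from which $u^-\equiv 0$ does not follow. Replacing $|u|^p$ by $|u^+|^p$ in the energy makes the $h$-contribution vanish identically when tested against $-u^-$ (disjoint supports), after which the sign of the principal part does force $u^-\equiv 0$; this is exactly why the paper inserts $u^+$ into $I_\lambda$.

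Second, the explicit rate $c_\lambda\leq C\lambda^{-N\theta/(sp-N\theta)}$ is not justified under $(M_2)$. Your bound $J_\lambda(\sigma u_0)\leq A\sigma^{N\theta/s}-B\lambda\sigma^p$ uses $\mathscr{M}(t)\leq\mathscr{M}(1)t^\theta$, valid only for $t\geq 1$; the maximizer of the right-hand side is $\sigma^*\sim\lambda^{-1/(p-N\theta/s)}\to 0$, which for large $\lambda$ lies in the regime $\sigma<1$ where the reverse inequality $\mathscr{M}(t)\geq\mathscr{M}(1)t^\theta$ holds and the bound fails. The paper instead shows the fibering maximizer $t_\lambda\to 0$ directly from $\langle I_\lambda'(t_\lambda e),e\rangle=0$ and then uses continuity of $\mathscr{M}$ at $0$; this gives $c_\lambda\to 0$ without a rate, which is all the compactness argument needs.
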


Then, for the case $1<p<N/s$, by utilizing the Ekeland variational principle we can get the second existence result as follows.
\begin{theorem}\label{th2}
Assume that $V$ satisfies $(V_1)$--$(V_2)$, $f$ satisfies $(f_1)$--$(f_3)$
and $M$ fulfills $(M_1)$--$(M_2)$. If $1<p<N/s$ and $0\leq h\in L^{\frac{N}{N-sp}}(\mathbb{R}^N)$,
then there exists
$\lambda_*>0$ such that for all $0<\lambda<\lambda_*$, problem \eqref{eq1}
admits a nontrivial nonnegative least energy solution
in $ W_V^{s,N/s}(\mathbb{R}^N)$. Moreover, $\lim_{\lambda\rightarrow0}\|u_\lambda\|=0$.
\end{theorem}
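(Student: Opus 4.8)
The plan is to obtain the solution as a \emph{local minimiser}, near the origin, of the energy functional
\[
J_\lambda(u)=\frac{s}{N}\mathscr{M}\big(\|u\|^{N/s}\big)-\int_{\mathbb{R}^N}F(x,u)\,dx-\frac{\lambda}{p}\int_{\mathbb{R}^N}h(x)(u^+)^p\,dx,\qquad u^+=\max\{u,0\},
\]
whose critical points solve \eqref{eq1} (with $(u^+)^{p-1}$ in place of $|u|^{p-2}u$); that $J_\lambda\in C^1(W_V^{s,N/s}(\mathbb{R}^N))$ follows from $(f_1)$ and the Trudinger--Moser inequality \eqref{TM} (note $[u]_{s,N/s}\le\|u\|$). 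First I would establish a local geometry: using $(f_3)$ to bound $F(x,t)\le\big(\tfrac{s\mathscr M(1)}{N}\lambda_1-\varepsilon_0\big)t^{\theta N/s}+(\text{higher order})$ near $t=0$, $(f_1)$ together with \eqref{TM} to control the exponential tail, the definition of $\lambda_1$, the continuous embeddings into $L^q$ for $q\ge N/s$, and the bound $\mathscr M(t)\ge\mathscr M(1)t^{\theta}$ for $0\le t\le1$ coming from $(M_2)$, one finds $\rho\in(0,1)$ and $\delta>0$, \emph{independent of $\lambda$}, and $\lambda_*>0$ so that $J_\lambda(u)\ge\delta$ for $\|u\|=\rho$ whenever $0<\lambda<\lambda_*$; here $\tfrac{\lambda}{p}\int h(u^+)^p\le C\lambda\|h\|_{L^{N/(N-sp)}}\|u\|^p$ is of lower order because $p<N/s$ and is absorbed for $\lambda$ small. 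One also takes $\rho$ so small that $q'\alpha_0\rho^{N/(N-s)}<\alpha_{N,s}$ for some $q'>1$, which will be essential later. Conversely, choosing $v\ge0$ with $\int h v^p>0$ and letting $t\to0^+$, the term $-\tfrac{\lambda t^p}{p}\int h v^p$ dominates $\tfrac{s}{N}\mathscr M(t^{N/s}\|v\|^{N/s})\ge0$ and $-\int F(x,tv)\le0$ (since $F\ge0$ by $(f_2)$), so $c_\lambda:=\inf_{\|u\|\le\rho}J_\lambda(u)<0$.

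Next I would apply Ekeland's variational principle on the complete metric space $\overline{B_\rho}$ to produce $(u_n)\subset\overline{B_\rho}$ with $J_\lambda(u_n)\to c_\lambda$ and $J_\lambda'(u_n)\to0$; since $c_\lambda<0<\delta\le\inf_{\partial B_\rho}J_\lambda$, eventually $u_n\in B_\rho$, so $(u_n)$ is a genuine bounded Palais--Smale sequence. The core of the argument is then compactness. Passing to a subsequence, $u_n\rightharpoonup u$ in $W_V^{s,N/s}(\mathbb{R}^N)$ and, by the compact embedding $W_V^{s,N/s}(\mathbb{R}^N)\hookrightarrow L^q(\mathbb{R}^N)$ (a consequence of $(V_2)$) together with $h\in L^{N/(N-sp)}$ and the \emph{uniform} bound $\int_{\mathbb{R}^N}\Phi_{\alpha_0}(u_n)^{q'}dx\le C$ furnished by $\|u_n\|\le\rho$ via \eqref{TM}, one checks $\int f(x,u_n)(u_n-u)\,dx\to0$ and $\int h(u_n^+)^{p-1}(u_n-u)\,dx\to0$. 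Hence $\langle J_\lambda'(u_n),u_n-u\rangle\to0$ forces $M(\|u_n\|^{N/s})\langle\mathcal A(u_n),u_n-u\rangle\to0$, where $\langle\mathcal A(w),\varphi\rangle=\langle w,\varphi\rangle_{s,N/s}+\int_{\mathbb{R}^N}V|w|^{N/s-2}w\varphi\,dx$. Since $\|u_n\|\to0$ would give $u_n\to0$ and $J_\lambda(u_n)\to0\neq c_\lambda$, we have $\liminf\|u_n\|>0$, so $(M_1)$ yields $\inf_n M(\|u_n\|^{N/s})>0$; therefore $\limsup\langle\mathcal A(u_n),u_n-u\rangle\le0$, and the $(S_+)$--property of $\mathcal A$ gives $u_n\to u$ strongly. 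Consequently $J_\lambda'(u)=0$ and $J_\lambda(u)=c_\lambda<0$, so $u\not\equiv0$; testing the equation with $u_-=\min\{u,0\}$ and using that $M(\|u\|^{N/s})>0$, that the right-hand side vanishes on $\{u\le0\}$, and that $\langle\mathcal A(u),u_-\rangle\ge[u_-]_{s,N/s}^{N/s}+\int_{\{u<0\}}V|u|^{N/s}\,dx\ge0$ (here $N/s>2$ makes the elementary inequality applicable) forces $u\ge0$. Thus $u=:u_\lambda$ is a nonnegative nontrivial solution of \eqref{eq1}, and being a minimiser of $J_\lambda$ on $\overline{B_\rho}$ it is a least energy solution.

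For the asymptotics as $\lambda\to0$, I would combine the identity $\langle J_\lambda'(u_\lambda),u_\lambda\rangle=0$, i.e. $M(\|u_\lambda\|^{N/s})\|u_\lambda\|^{N/s}=\int f(x,u_\lambda)u_\lambda\,dx+\lambda\int h u_\lambda^{p}\,dx$, with $J_\lambda(u_\lambda)=c_\lambda<0$, $(M_2)$, and $(f_2)$, eliminating $\int f(x,u_\lambda)u_\lambda$ to arrive at
\[
0>c_\lambda\ \ge\ \Big(\tfrac{s}{N\theta}-\tfrac1\mu\Big)M(\|u_\lambda\|^{N/s})\|u_\lambda\|^{N/s}-\Big(\tfrac1p-\tfrac1\mu\Big)\lambda\int_{\mathbb{R}^N} h u_\lambda^{p}\,dx,
\]
where $\tfrac{s}{N\theta}-\tfrac1\mu>0$ and $\tfrac1p-\tfrac1\mu>0$ because $p<N/s<\theta N/s<\mu$. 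This yields $A\,M(\|u_\lambda\|^{N/s})\|u_\lambda\|^{N/s}\le B\lambda\|h\|_{L^{N/(N-sp)}}\|u_\lambda\|^{p}$ with $A,B>0$. If $\|u_\lambda\|\not\to0$, then along some $\lambda_j\to0$ we have $\|u_{\lambda_j}\|\ge\delta_0>0$, hence $M(\|u_{\lambda_j}\|^{N/s})\ge\kappa(\delta_0^{N/s})>0$ by $(M_1)$ and then $\|u_{\lambda_j}\|^{N/s-p}\le\frac{B\|h\|_{L^{N/(N-sp)}}C^{p}}{A\kappa(\delta_0^{N/s})}\lambda_j\to0$ (using $N/s>p$), contradicting $\|u_{\lambda_j}\|\ge\delta_0$. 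Therefore $\|u_\lambda\|\to0$.

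I expect the compactness step to be the main difficulty: because of the exponential nonlinearity, the convergence of $\int f(x,u_n)(u_n-u)\,dx$ relies on a \emph{uniform} Trudinger--Moser bound for $(u_n)$, which is available only because $\|u_n\|\le\rho$ is small — this is precisely why the whole construction must be local and why $\rho$ is chosen with $q'\alpha_0\rho^{N/(N-s)}<\alpha_{N,s}$. The possibly degenerate Kirchhoff term ($M(0)=0$) adds the secondary difficulty that one must first rule out $\|u_n\|\to0$ before $(M_1)$ and the $(S_+)$--property of $\mathcal A$ can be invoked; the same dichotomy reappears in the analysis of the limit $\lambda\to0$.
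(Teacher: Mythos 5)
Your proposal is correct and its overall architecture — local geometry near the origin for $\lambda$ small, Ekeland's variational principle in $\overline{B_\rho}$ to produce a bounded Palais--Smale sequence at the negative level $c_\lambda=\inf_{\overline{B_\rho}}J_\lambda$, compactness via the uniform Trudinger--Moser bound on the small ball together with $(M_1)$ and the $(S_+)$ inequality, and nonnegativity by testing with the negative part — matches the paper's proof (Lemmas~\ref{lem4.1}--\ref{lem4.2} and the end of Section~\ref{sec4}). The interesting divergence is in how you get $\|u_\lambda\|\to 0$ as $\lambda\to 0$. The paper takes a $\lambda$-dependent radius $\widetilde\rho_\lambda$ in Lemma~\ref{lem4.1}, arranges $\widetilde\rho_\lambda\to 0$ as $\lambda\to 0$, and then reads off the asymptotics from $\|u_\lambda\|\le\widetilde\rho_\lambda$; as written, however, the coupling between the choice of $\widetilde\rho_\lambda$ and the threshold $\Lambda_2$ (which itself is defined in terms of $\widetilde\rho_\lambda$) is left implicit. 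You instead fix one small $\rho$ independent of $\lambda$, and derive the decay from the structure of the equation: combining $\langle J_\lambda'(u_\lambda),u_\lambda\rangle=0$ with $J_\lambda(u_\lambda)<0$, $(M_2)$, and $(f_2)$ gives
\[
\Bigl(\tfrac{s}{N\theta}-\tfrac1\mu\Bigr)M\bigl(\|u_\lambda\|^{N/s}\bigr)\|u_\lambda\|^{N/s}\ \le\ \Bigl(\tfrac1p-\tfrac1\mu\Bigr)\lambda\,S_{N/s}^p\|h\|_{L^{N/(N-sp)}}\|u_\lambda\|^p,
\]
and since $p<N/s$, any subsequence bounded away from zero would give $\|u_{\lambda_j}\|^{N/s-p}\lesssim\lambda_j\to0$ via $(M_1)$, a contradiction. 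This argument is self-contained and avoids the need for a shrinking radius, which I consider cleaner. One minor quibble, common to both your write-up and the paper: the phrase ``least energy solution'' is asserted but not really justified — what is shown is that $u_\lambda$ minimizes the energy over $\overline{B_\rho}$, not a priori over all nontrivial critical points. Also, your parenthetical ``$N/s>2$ makes the elementary inequality applicable'' is attached to the nonnegativity step where it isn't actually needed; it is needed later for the $(S_+)$/strong-convergence step (as in Lemma~\ref{lem3.6}).
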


Finally, to study the existence of infinitely many solutions for problem \eqref{eq1} in the case $1<p<N/s$,
inspired by the method adopted in \cite{XMTZ}, we appeal to the genus theory.
However, we encounter some technical difficulties under the general assumptions $(M_1)$--$(M_2)$.  Therefore, we consider the classical Kirchhoff function, that is, $M(t)=a+b\theta t^{\theta-1}$ for all $t\geq0$, where $a\geq0,b\geq0$, $a+b>0$ and $\theta>1$.  As a consequence, we are able to prove a further result compared to Theorem \ref{th2}.

\begin{theorem}\label{th3}
Assume that $V$ satisfies $(V_1)$--$(V_2)$, $f$ satisfies $(f_1)$--$(f_3)$, and $M(t)=a+b\theta t^{\theta-1}$ for all $t\geq0$, where $a\geq0,b\geq0$, $a+b>0$ and $\theta>1$. If $1<p<N/s$ and $0\leq h\in L^{\frac{N}{N-sp}}(\mathbb{R}^N)$,
then there exists
$\lambda_{**}\in(0, \lambda_*]$ such that for all $0<\lambda<\lambda_{**}$, problem \eqref{eq1}
has infinitely many solutions in $ W_V^{s,N/s}(\mathbb{R}^N)$.
\end{theorem}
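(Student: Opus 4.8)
The plan is to symmetrize the problem, truncate the energy functional so that genus theory becomes applicable, and then build a sequence of negative minimax levels. Since $f(x,\cdot)$ is supported on $\mathbb R^+$, I would first replace $f$ by its odd extension $\widehat f$ (so that $\widehat f\equiv f$ on $\mathbb R^N\times\mathbb R^+$), with primitive $\widehat F(x,t)=F(x,|t|)$; then $\widehat f$ inherits symmetrized forms of $(f_1)$--$(f_3)$, and the functional
\[
J_\lambda(u)=\frac sN\,\mathscr M\big(\|u\|^{N/s}\big)-\int_{\mathbb R^N}\widehat F(x,u)\,dx-\frac\lambda p\int_{\mathbb R^N}h(x)|u|^p\,dx
\]
is of class $C^1$ on $X:=W_V^{s,N/s}(\mathbb R^N)$, even, with $J_\lambda(0)=0$, and its nontrivial critical points are weak solutions of \eqref{eq1}. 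So it suffices to produce infinitely many nontrivial critical points of $J_\lambda$. (We may assume $\{h>0\}$ has positive measure, since otherwise \eqref{eq1} is independent of $\lambda$.)

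Because of the Ambrosetti--Rabinowitz condition $(f_2)$, $J_\lambda$ is unbounded below, so I cannot apply genus theory to it directly; the second step is a truncation. Fix $0<\rho_0<\rho_1$ so small (depending only on $N,s,\alpha_0$) that the fractional Trudinger--Moser inequality \eqref{TM} yields a uniform bound $\int_{\mathbb R^N}\Phi_{q\alpha_0}(u)\,dx\le C_{N,s}$ whenever $\|u\|\le\rho_1$, for a fixed $q>1$ large enough for the Hölder estimates needed below. Choosing a non-increasing $\chi\in C^\infty([0,\infty);[0,1])$ with $\chi\equiv1$ on $[0,\rho_0]$ and $\chi\equiv0$ on $[\rho_1,\infty)$, I set
\[
\widetilde J_\lambda(u)=\frac sN\,\mathscr M\big(\|u\|^{N/s}\big)-\chi(\|u\|)\Big(\int_{\mathbb R^N}\widehat F(x,u)\,dx+\frac\lambda p\int_{\mathbb R^N}h(x)|u|^p\,dx\Big).
\]
Using $(f_1)$, $(f_3)$, $h\in L^{N/(N-sp)}(\mathbb R^N)$ and the explicit form $\mathscr M(t)=at+bt^\theta$, one checks that $\widetilde J_\lambda\in C^1(X)$ is even, $\widetilde J_\lambda(0)=0$, $\widetilde J_\lambda\equiv J_\lambda$ on $\{\|u\|\le\rho_0\}$, $\widetilde J_\lambda(u)=\tfrac sN\mathscr M(\|u\|^{N/s})\ge0$ for $\|u\|\ge\rho_1$, and $\widetilde J_\lambda$ is bounded below on $X$; moreover, after possibly shrinking the threshold to some $\lambda_{**}\in(0,\lambda_*]$, every critical point of $\widetilde J_\lambda$ of negative energy lies in $\{\|u\|<\rho_0\}$ and is therefore a critical point of $J_\lambda$. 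This last assertion, which is what forces $\lambda$ to be small (it absorbs $\tfrac\lambda p\int h|u|^p$ into a fixed positive amount coming from $\tfrac sN\mathscr M(\|u\|^{N/s})$ on the annulus $\rho_0\le\|u\|\le\rho_1$), is the technically delicate part of the truncation and is precisely where the special structure of $M$ and condition $(f_3)$ are used.

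The third step, and the main obstacle, is the Palais--Smale condition: I must show $\widetilde J_\lambda$ satisfies $(PS)_c$ for every $c<0$. Given $\widetilde J_\lambda(u_n)\to c<0$ and $\widetilde J_\lambda'(u_n)\to0$, boundedness of $(u_n)$ is immediate (since $\widetilde J_\lambda\ge0$ outside $\{\|u\|<\rho_1\}$), so $u_n\rightharpoonup u$ in $X$ up to a subsequence; by $(V_1)$--$(V_2)$ the embedding $X\hookrightarrow L^r(\mathbb R^N)$ is compact for all $r\in[N/s,\infty)$, whence $u_n\to u$ in every such $L^r$ and a.e. Since $\|u_n\|\le\rho_1$ eventually, the localized Trudinger--Moser bound, together with $(f_1)$ and a convergence lemma for Nemytskii operators of exponential type, yields $\int h|u_n|^{p-2}u_n(u_n-u)\to0$, $\int\widehat f(x,u_n)(u_n-u)\to0$ and $\int\widehat F(x,u_n)\to\int\widehat F(x,u)$. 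Writing out $\langle\widetilde J_\lambda'(u_n),u_n-u\rangle$, the differentiation of $\chi(\|\cdot\|)$ produces a factor $-\chi'(\|u_n\|)\|u_n\|^{1-N/s}\big(\int\widehat F(x,u_n)+\tfrac\lambda p\int h|u_n|^p\big)\ge0$ (as $\chi'\le0$) multiplying the nonlinear form $\langle u_n,u_n-u\rangle_{s,N/s}+\int_{\mathbb R^N}V|u_n|^{N/s-2}u_n(u_n-u)\,dx$; adding it to $M(\|u_n\|^{N/s})$ gives a coefficient of that form which is bounded below by a positive constant — indeed, where $\chi'(\|u_n\|)\neq0$ one has $\|u_n\|\ge\rho_0$ so $M(\|u_n\|^{N/s})\ge\kappa(\rho_0^{N/s})>0$ by $(M_1)$, while if $\|u_n\|\to0$ then $u_n\to0$ in $X$, contradicting $\widetilde J_\lambda(u_n)\to c<0$. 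Hence $\langle u_n,u_n-u\rangle_{s,N/s}+\int_{\mathbb R^N}V|u_n|^{N/s-2}u_n(u_n-u)\,dx\to0$, and the $(S_+)$-property of $u\mapsto(-\Delta)^s_{N/s}u+V|u|^{N/s-2}u$ on $X$ forces $u_n\to u$ strongly. The difficulty here is genuine: the exponential growth of $f$ destroys the usual compactness of the Nemytskii operator, so the argument must marry the Trudinger--Moser bound (valid only on small balls of $X$) with the weighted compact embeddings supplied by $(V_2)$, while simultaneously controlling the cutoff-derivative terms through their sign and $(M_1)$.

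For the final step I would run the genus machinery. Given $k\in\mathbb N$, fix a $k$-dimensional subspace $E_k\subset C_0^\infty(\mathbb R^N)$ of functions supported in a ball where $\int h\,dx>0$; on the finite-dimensional $E_k$ all norms are equivalent, so $\int_{\mathbb R^N}h|\varphi|^p\,dx\ge c_k\|\varphi\|^p$ for $\varphi\in E_k$, with $c_k>0$. Since $\widehat F\ge0$, $\theta\ge1$ and $p<N/s$,
\[
\widetilde J_\lambda(\varphi)\le\frac{as}{N}\|\varphi\|^{N/s}+\frac{bs}{N}\|\varphi\|^{\theta N/s}-\frac{\lambda c_k}{p}\|\varphi\|^p\le\frac{s(a+b)}{N}\|\varphi\|^{N/s}-\frac{\lambda c_k}{p}\|\varphi\|^p<0
\]
for $0<\|\varphi\|=\delta_k$ with $\delta_k<\rho_0$ small (this is exactly where $p<N/s$ is decisive). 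Hence $A_k:=\{\varphi\in E_k:\|\varphi\|=\delta_k\}$ is compact and symmetric with Krasnoselskii genus $\gamma(A_k)=k$, and $\sup_{A_k}\widetilde J_\lambda<0$. Setting $\Gamma_k=\{A\subset X\setminus\{0\}:A\text{ closed},\ A=-A,\ \gamma(A)\ge k\}$ and $c_k=\inf_{A\in\Gamma_k}\sup_{u\in A}\widetilde J_\lambda(u)$, we obtain $-\infty<c_1\le c_2\le\cdots<0$. Since $\widetilde J_\lambda$ is even and $C^1$, bounded below, and satisfies $(PS)_c$ for every $c<0$, the standard symmetric (genus) critical point theorem applies: either the $c_k$ are all distinct — then they form infinitely many negative critical values accumulating at $0$ — or $c_k=c_{k+1}$ for some $k$, in which case the critical set at that level has genus $\ge2$ and is infinite. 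In either case $\widetilde J_\lambda$ has infinitely many critical points with negative critical value; by the second step they lie in $\{\|u\|<\rho_0\}$, hence are critical points of $J_\lambda$, and therefore solutions of \eqref{eq1}.
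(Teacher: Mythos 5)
Your proposal follows essentially the same strategy as the paper: truncate the energy functional so it is bounded below and coincides with the original near the origin, then run genus theory (Clark-type minimax) to produce infinitely many negative critical values, verifying a local $(PS)_c$ condition for $c<0$ via the fractional Trudinger--Moser bound on small balls of $W_V^{s,N/s}(\mathbb R^N)$. The differences are in implementation and are worth flagging.

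\emph{Symmetrization.} You explicitly replace $f$ by its odd extension $\widehat f$ so that the functional is even. This is a sensible repair: the Krasnoselskii genus machinery and the symmetric deformation lemma require evenness, and the paper's $\mathcal I_\lambda$ --- built from $F(x,\cdot)$, which vanishes on $\mathbb R^-$ but is positive on $\mathbb R^+$ by $(f_2)$ --- is \emph{not} even, a point the paper glosses over. However, you then assert that critical points of the symmetrized functional ``are weak solutions of~\eqref{eq1}''; this is not correct, since they solve the equation with $\widehat f$ in place of $f$, and these coincide only where the solution is nonnegative. Sign-changing minimax critical points (generically present for larger $k$) are thus not solutions of~\eqref{eq1}. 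The paper has the mirror problem (solutions of~\eqref{eq1} but a non-even functional), so this is a gap common to both arguments, but you should not state the identification as unqualified.

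\emph{Cutoff structure.} Your cutoff $\chi$ has fixed thresholds $\rho_0<\rho_1$ depending only on $N,s,\alpha_0$, and kills both $\int\widehat F$ and $\int h|u|^p$; consequently $\widetilde J_\lambda=\tfrac sN\mathscr M(\|u\|^{N/s})\ge 0$ automatically for $\|u\|\ge\rho_1$, so nothing special is needed for large $\|u\|$. The paper's cutoff $\Psi$ has $\lambda$-dependent thresholds $T_0(\lambda)<\min\{T_1(\lambda),1\}$, the two roots of the auxiliary function $G_\lambda$, truncates only the $F$-term, and therefore needs a separate analysis for $\|u\|>1$ (its ``Case 2''). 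Your version is structurally cleaner, but the pivotal claim that $\widetilde J_\lambda\ge0$ on the annulus $\rho_0\le\|u\|\le\rho_1$ for $\lambda<\lambda_{**}$ --- which you label ``technically delicate'' and do not actually prove --- is exactly the $G_\lambda$ estimate the paper carries out explicitly using $(f_1)$, $(f_3)$, the bound $\mathscr M(t)\ge\mathscr M(1)t^\theta$ on $[0,1]$, the explicit form $M(t)=a+b\theta t^{\theta-1}$, and Lemma 5.1 ($T_0(\lambda)\to0$). A complete proof must supply this estimate; it is the heart of the truncation step.

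\emph{$(PS)_c$.} Your treatment of the $\chi'$-term is unnecessary: once $\widetilde J_\lambda\ge0$ on $\{\|u\|\ge\rho_0\}$ is established, any $(PS)_c$ sequence with $c<0$ eventually satisfies $\|u_n\|<\rho_0$, where $\chi\equiv1$ is locally constant, so $\widetilde J_\lambda'(u_n)$ coincides with the derivative of the untruncated functional and the argument reduces to Lemma 4.2. This is the shortcut the paper's Lemma 5.2 uses, and it sidesteps the sign analysis of $\chi'$ altogether.
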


Here we point out that it remains  open to establish whether $\lambda_*=\lambda_{**}$
from Theorems \ref{th2} and \ref{th3}.
Moreover, it would be interesting to investigate whether there are solutions to problem \eqref{eq1} as $\lambda\in [\lambda_*, \lambda^*]$
from Theorems \ref{th1} and  \ref{th2}.

Let us simply describe the approaches to prove Theorems 1.1--1.3. To show the existence of at least one nonnegative solution of problem \eqref{eq1}, we shall use the mountain pass theorem. However, since the nonlinear term in problem \eqref{eq1} satisfies exponential growth, it is difficult to get the global Palais-Smale condition. To overcome the lack of compactness due to the presence of an exponential nonlinearity, we employ some tricks borrowed from  \cite{AFP}, where a critical Kirchhoff problem involving the fractional Laplacian has been studied. We first show that the energy functional associated with problem \eqref{eq1} satisfies the Palais-Smale condition at suitable levels $c_{\lambda}.$ In this process, the key point is to study the asymptotical behaviour of $c_{\lambda}$ as $\lambda\rightarrow\infty$, see Lemma \ref{lem3.4} for more details. For the case $1<p<N/s$ and $\lambda$ small enough, we prove that \eqref{eq1} has at least one nontrivial solution with negative energy by using Ekeland's variational principle. In order to get the multiplicity of solutions for problem \eqref{eq1}  for $\lambda$ small enough,
we follow some ideas from \cite{AA} and use the genus theory.

To the best of our knowledge, Theorems~\ref{th1}--\ref{th3} are the first results
for the Schr\"{o}dinger--Kirchhoff equations involving
Trudinger--Moser nonlinearities in the fractional setting.

The paper is organized as follows.
In Section~\ref{sec2}, we present the functional setting and prove
preliminary results.
In Section~\ref{sec3}, we obtain the existence of nontrivial nonnegative solutions for problem \eqref{eq1} for $\lambda$ large enough,
 by using the mountain pass theorem.
In Section~\ref{sec4},
we prove the existence of nonnegative solutions for problem \eqref{eq1} for $\lambda$ small enough,
 by using the Ekeland variational principle.
In Section~\ref{sec5},
we investigate the existence of infinitely many solutions for problem \eqref{eq1} by applying the genus theory.
In Section~\ref{sec6},
we extend Theorems \ref{th1}--\ref{th3} to get
wider applications, by replacing the fractional $N/s$--Laplacian
operator with a general
nonlocal integro--differential operator.
\medskip

\section{Preliminaries}\label{sec2}
In this section, we first provide the functional setting for problem \eqref{eq1}.
Let $1<p<\infty$ and let $L^p(\mathbb{R}^N,V)$ denote the Lebesgue space of real-valued functions, with
$V(x)|u|^p\in L^1({\mathbb{R}}^N),$ equipped with the norm
$$
\|u\|_{p,V}=\left(\int_{\mathbb{R}^N}V(x)|u|^p dx\right)^{{1}/{p}}\quad \text{for all }u\in L^p(\mathbb{R}^N,V).
$$
Set
\begin{align*}
W^{s,p}(\mathbb{R}^N)=\{u\in L^{p}(\mathbb{R}^N):
[u]_{s,p}<\infty\},
\end{align*}
where the Gagliardo seminorm $[u]_{s,p}$ is defined by
\begin{align*}
[u]_{s,p}=
\left(\iint_{\mathbb{R}^{2N}}\frac{|u(x)-u(y)|^{p}}{|x-y|^{N+sp}}dxdy\right)^{1/p}
.
\end{align*}
Equipped with the following norm
\begin{align*}
\|u\|_{s,p}=\left(\|u\|_{L^p(\mathbb{R}^N)}^p+[u]_{s,p}^p\right)^{1/p},
\end{align*}
$W^{s,p}(\mathbb{R}^N)$ is a Banach space.
The fractional critical exponent is defined by
\begin{align*}
p_s^*=
\begin{cases}
\frac{Np}{N-sp}\ \ &{\rm if}\ sp<N;\\
\infty\ \ &{\rm if}\ sp\geq N.
\end{cases}
\end{align*}
Moreover, the fractional Sobolev embedding states that
$W^{s,p}(\mathbb{R}^N)\hookrightarrow L^{p_s^*}(\mathbb{R}^N)$ is continuous
if $sp<N$, and $W^{s,p}(\mathbb{R}^N)$$\hookrightarrow L^{q}(\mathbb{R}^N)$ is continuous for all $p\leq q<\infty$ if $sp=N$. For a more detailed account of the properties of
$W^{s,p}(\mathbb{R}^N)$, we refer to \cite{r28}.

By $(V_1)$ and \cite[Theorem 6.9]{r28}, the embedding $W^{s,N/s}_V(\mathbb{R}^N)\hookrightarrow L^{\nu}(\mathbb{R}^N)$ is continuous for any $\nu\in [N/s,\infty)$, namely there exists a positive constant $C_\nu$ such that
\begin{align*}
\|u\|_{L^{\nu}(\mathbb{R}^N)}\leq C_\nu \|u\|\quad\mbox{for all } u\in W^{s,N/s}_V(\mathbb{R}^N).
\end{align*}
To prove the existence of weak solutions of~\eqref{eq1}, we shall
use the following embedding theorem.

\begin{theorem}[Compact embedding, II -- Theorem~2.1 of~\cite{PXZ}]
\label{th2.1}
Assume that conditions $(V_1)$ and $(V_2)$ hold. Then for any $\nu\geq N/s$
the embedding
$
W_V^{s,N/s}(\mathbb{R}^N)\hookrightarrow\hookrightarrow  L^\nu(\mathbb{R}^N)
$
is compact.
\end{theorem}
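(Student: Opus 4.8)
The plan is to reduce the assertion to the single exponent $\nu=N/s$ and then pass to arbitrary $\nu\ge N/s$ by interpolation, the crucial ingredient being a uniform decay-at-infinity estimate that exploits $(V_1)$ and $(V_2)$. The local compactness on balls (fractional Rellich--Kondrachov, from \cite{r28}) plus a diagonal argument takes care of the bulk; the decay estimate takes care of the tail.

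\emph{Step 1 (uniform tail estimate).} I would first prove: for every $\varepsilon>0$ there is $R=R(\varepsilon)>0$ such that $\int_{\mathbb{R}^N\setminus B_R}|u|^{N/s}\,dx\le\varepsilon\|u\|^{N/s}$ for every $u\in W_V^{s,N/s}(\mathbb{R}^N)$. Write $\int_{B_R^c}|u|^{N/s}$ as the sum of the integrals over $\{V>c\}$ and over $\{V\le c\}$. On $\{V>c\}$ the integral is at most $c^{-1}\int_{\mathbb{R}^N}V|u|^{N/s}\le c^{-1}\|u\|^{N/s}$, which is $<\tfrac{\varepsilon}{2}\|u\|^{N/s}$ once $c$ is large. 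For the part on $\{V\le c\}$, cover $B_R^c$ by balls $B_h(y_i)$, with $h$ the constant from $(V_2)$, chosen with bounded overlap $\kappa=\kappa(N)$ and centers $|y_i|\ge R-h$. On each ball apply H\"older with exponent $q/(N/s)$ for some fixed $q>N/s$, then the translation-invariant local embedding $W^{s,N/s}(B_h(y_i))\hookrightarrow L^q(B_h(y_i))$ (valid since $sp=N$), obtaining $\int_{B_h(y_i)\cap\{V\le c\}}|u|^{N/s}\le |\{x\in B_h(y_i):V(x)\le c\}|^{\sigma_0}\,C\,\|u\|^{N/s}_{W^{s,N/s}(B_h(y_i))}$ with $\sigma_0=1-(N/s)/q\in(0,1)$. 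By $(V_2)$, for $R$ large all these measures are smaller than any prescribed $\eta>0$; pulling $\eta^{\sigma_0}$ out of the sum and using bounded overlap together with $(V_1)$ (which gives $\|u\|_{L^{N/s}(\mathbb{R}^N)}^{N/s}\le V_0^{-1}\|u\|^{N/s}$) yields $\sum_i\|u\|^{N/s}_{W^{s,N/s}(B_h(y_i))}\le\kappa(1+V_0^{-1})\|u\|^{N/s}$, so the $\{V\le c\}$ contribution is $\le C\eta^{\sigma_0}\|u\|^{N/s}<\tfrac{\varepsilon}{2}\|u\|^{N/s}$ for $\eta$ small. Thus one chooses $c$ first, then $\eta$, then $(V_2)$ supplies $R$.

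\emph{Step 2 (compactness, then interpolation).} Let $(u_n)$ be bounded in $W_V^{s,N/s}(\mathbb{R}^N)$. Since $V\ge V_0$, each $(u_n|_{B_k})$ is bounded in $W^{s,N/s}(B_k)$, and $W^{s,N/s}(B_k)\hookrightarrow\hookrightarrow L^{N/s}(B_k)$ by \cite{r28}; a diagonal argument over $k\in\mathbb{N}$ gives a subsequence, still denoted $(u_n)$, converging in $L^{N/s}(B_k)$ for every $k$. Combining with Step 1: given $\varepsilon$, fix $R=R(\varepsilon)$; since $N/s\ge2$, $\|u_n-u_m\|_{L^{N/s}(\mathbb{R}^N)}^{N/s}\le 3^{N/s-1}\bigl(\|u_n-u_m\|_{L^{N/s}(B_R)}^{N/s}+\int_{B_R^c}|u_n|^{N/s}+\int_{B_R^c}|u_m|^{N/s}\bigr)$, where the last two terms are $\le\varepsilon\sup_n\|u_n\|^{N/s}$ and the first tends to $0$; hence $(u_n)$ is Cauchy, so convergent, in $L^{N/s}(\mathbb{R}^N)$, say $u_n\to u$. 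Finally, by the continuous embeddings $W_V^{s,N/s}(\mathbb{R}^N)\hookrightarrow L^q(\mathbb{R}^N)$ for all $q\in[N/s,\infty)$, the sequence is bounded in every such $L^q$; fixing $\nu\ge N/s$ and $q>\nu$, the interpolation inequality $\|u_n-u\|_{L^\nu}\le\|u_n-u\|_{L^{N/s}}^{1-\beta}\|u_n-u\|_{L^q}^{\beta}$ gives $u_n\to u$ in $L^\nu(\mathbb{R}^N)$, and passing to the weak limit in the reflexive space $W_V^{s,N/s}(\mathbb{R}^N)$ shows $u\in W_V^{s,N/s}(\mathbb{R}^N)$. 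This establishes compactness of the embedding for every $\nu\ge N/s$.

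\emph{Main obstacle.} The delicate point is Step 1: running the covering/H\"older argument with constants independent of the translations $y_i$, and \emph{extracting} the small factor supplied by $(V_2)$ before summing over the (in general infinitely many) balls --- applying H\"older directly to the sum would fail --- so that the residual sum is absorbed by bounded overlap and $(V_1)$. The rest is routine.
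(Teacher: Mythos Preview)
Your proof is correct and follows essentially the same route as the paper, which simply refers to \cite[Theorem~2.1]{PXZ} with the remark that in the borderline case $sp=N$ one replaces the critical exponent $p_s^*$ by an auxiliary $\varrho>\nu$; the underlying argument in \cite{PXZ} is precisely your covering/H\"older tail estimate under $(V_2)$, local fractional Rellich--Kondrachov, and interpolation. You have simply made explicit the details that the paper leaves to the cited reference.
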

\begin{proof}
The proof is similar to the proof of Theorem 2.1 in \cite{PXZ}. Indeed, one can choose $\varrho>N/s$ such that $\nu\in [N/s,\varrho]$. Here $\varrho$ plays the same role as $p_s^*$ in \cite[Theorem 2.1]{PXZ}. Then, using the fact that the embedding $W^{s,N/s}_V(\mathbb{R}^N)\hookrightarrow L^{\varrho}(\mathbb{R}^N)$ is continuous and the same discussion as Theorem 2.1 in \cite{PXZ}, one can obtain the desired conclusion.
\end{proof}

The following radial lemma can be found in \cite[Radial Lemma A.IV]{BLions}.
\begin{lemma}\label{RL}
Let $N\geq2$. If $u\in L^p(\mathbb{R}^N)$ with $1\leq p<\infty$, is a radial non-increasing function (i.e. $ 0\leq u(x)\leq u(y)$ if $|x|\geq |y|$), then
\begin{align*}
|u(x)|\leq |x|^{-N/p}\left(\frac{N}{\omega_{N-1}}\right)^{1/p}\|u\|_{L^p(\mathbb{R}^N)},\  x\neq 0,
\end{align*}
where  $\omega_{N-1}$ is the $(N-1)$-dimensional measure of the $(N- 1)$-sphere.
\end{lemma}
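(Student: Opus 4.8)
The plan is to exploit the monotonicity of $u$ to compare its value at a point $x$ with its $L^p$-average over the whole ball $B_{|x|}(0)$. First I would fix $x\in\mathbb{R}^N$ with $x\neq 0$ and write $r=|x|$. Since $u$ is radial and non-increasing, every $y\in B_r(0)$ satisfies $|y|\le r=|x|$, so the hypothesis gives $u(y)\ge u(x)\ge 0$, and therefore $|u(y)|^p\ge |u(x)|^p$ for a.e. $y\in B_r(0)$.

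Next I would integrate this pointwise inequality over $B_r(0)$ and use $u\in L^p(\mathbb{R}^N)$ to get
$$\|u\|_{L^p(\mathbb{R}^N)}^p\ \ge\ \int_{B_r(0)}|u(y)|^p\,dy\ \ge\ |u(x)|^p\,|B_r(0)|.$$
It then only remains to insert the elementary volume identity $|B_r(0)|=\frac{\omega_{N-1}}{N}\,r^N$, which follows from polar coordinates via $|B_r(0)|=\int_0^r\omega_{N-1}\rho^{N-1}\,d\rho$, where $\omega_{N-1}$ is the $(N-1)$-dimensional measure of $S^{N-1}$. Substituting $r=|x|$ yields $\|u\|_{L^p(\mathbb{R}^N)}^p\ge |u(x)|^p\,\frac{\omega_{N-1}}{N}\,|x|^N$, and dividing and taking $p$-th roots gives exactly
$$|u(x)|\le |x|^{-N/p}\left(\frac{N}{\omega_{N-1}}\right)^{1/p}\|u\|_{L^p(\mathbb{R}^N)}.$$

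There is essentially no analytic obstacle here: the proof needs neither compactness, nor regularity, nor density arguments. The only two minor points that call for care are (i) reading the hypothesis "radial non-increasing" up to modification on a Lebesgue-null set, so that the comparison $u(y)\ge u(x)$ indeed holds for a.e. $y\in B_{|x|}(0)$ — which is exactly what the stated condition $0\le u(x)\le u(y)$ for $|x|\ge|y|$ encodes; and (ii) using the correct normalization $|B_1(0)|=\omega_{N-1}/N$ linking the Lebesgue measure of the unit ball to the $(N-1)$-dimensional measure of the unit sphere. The estimate is sharp, equality being approached by functions that are constant on $B_r(0)$ and vanish outside it.
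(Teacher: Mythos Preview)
Your argument is correct and is precisely the standard proof of this radial decay estimate. The paper itself does not give a proof of this lemma at all; it simply quotes the result from \cite[Radial Lemma A.IV]{BLions}. What you have written is essentially the argument given there: use monotonicity to bound $|u(x)|^p$ by the $L^p$-average over $B_{|x|}(0)$, then insert the volume formula $|B_r(0)|=\omega_{N-1}r^N/N$. Nothing is missing, and your side remarks on the a.e.\ interpretation and on the normalization of $\omega_{N-1}$ are appropriate.
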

Clearly, by Lemma \ref{RL}, we have
\begin{align}\label{RI}
|u(x)|^{N/s}\leq \frac{N\|u\|^{N/s}_{N/s, V}}{V_0\omega_{N-1}|x|^{N}},\  x\neq 0,
\end{align}
for all radial non-increasing function $u\in W_V^{s,N/s}(\mathbb{R}^N)$.

In the sequel, we will prove some technical lemmas which will be used later on.

\begin{lemma}\label{lem2.2}
Let $\alpha<\alpha_{N,s}$. If $u\in W_V^{s,N/s}(\mathbb{R}^N)$ and $\|u\|\leq K $ with $0<K<\left(\frac{\alpha_{N,s}}{\alpha}\right)^{(N-s)/N}$ and $\varphi\in W_V^{s,N/s}(\mathbb{R}^N)$. Then there exists a constant $C(N,s,\alpha,K)>0$ such that
\begin{align*}
\int_{\mathbb{R}^N}\Phi_\alpha(u)|\varphi|dx\leq C(N,s,\alpha,K)\|\varphi\|_{L^\nu(\mathbb{R}^N)},
\end{align*}
for some $\nu>N/s$.
\end{lemma}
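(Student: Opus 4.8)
The plan is to estimate the integral by splitting the exponent $\alpha$ slightly above itself to absorb the factor $|\varphi|$ via Hölder's inequality, using the Trudinger--Moser inequality \eqref{TM} to control the exponential part. First I would choose $q>1$ close to $1$ so that $q\alpha K^{N/(N-s)}<\alpha_{N,s}$; this is possible precisely because the hypothesis $K<\left(\frac{\alpha_{N,s}}{\alpha}\right)^{(N-s)/N}$ gives $\alpha K^{N/(N-s)}<\alpha_{N,s}$ strictly, so there is room to enlarge by a factor $q>1$. Let $q'$ be the conjugate exponent of $q$ and set $\nu=q'N/s>N/s$. Then by Hölder's inequality,
\begin{align*}
\int_{\mathbb{R}^N}\Phi_\alpha(u)|\varphi|\,dx
\leq\left(\int_{\mathbb{R}^N}\Phi_\alpha(u)^{q}\,dx\right)^{1/q}
\left(\int_{\mathbb{R}^N}|\varphi|^{\nu}\,dx\right)^{1/\nu}
=\left(\int_{\mathbb{R}^N}\Phi_\alpha(u)^{q}\,dx\right)^{1/q}\|\varphi\|_{L^{\nu}(\mathbb{R}^N)}^{N/s}.
\end{align*}

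Next I would bound $\Phi_\alpha(u)^q$ by a constant times $\Phi_{q\alpha}(c\,u)$ for a suitable constant, or more directly observe from the definition \eqref{ex} that there is a constant $C=C(q,N,s)$ with $\Phi_\alpha(t)^q\leq C\,\Phi_{q\alpha}(t)$ for all $t\geq0$ (the subtracted partial sums only help, and the leading term is handled by $(\exp(\alpha t^{N/(N-s)})-1)^q\leq \exp(q\alpha t^{N/(N-s)})-1$ up to lower-order corrections, which one checks by elementary estimates). Then, writing $v=u/\|u\|$ so that $[v]_{s,N/s}\leq\|v\|=1$, and using $\|u\|\leq K$,
\begin{align*}
\int_{\mathbb{R}^N}\Phi_{q\alpha}(u)\,dx
=\int_{\mathbb{R}^N}\Phi_{q\alpha\|u\|^{N/(N-s)}}(v)\,dx
\leq\int_{\mathbb{R}^N}\Phi_{q\alpha K^{N/(N-s)}}(v)\,dx\leq C_{N,s},
\end{align*}
where the monotonicity of $\alpha\mapsto\Phi_\alpha(t)$ is used in the middle step and \eqref{TM} in the last, legitimately since $q\alpha K^{N/(N-s)}<\alpha_{N,s}$ and $[v]_{s,N/s}\leq1$. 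Combining these bounds yields the claim with $C(N,s,\alpha,K)=\big(C\,C_{N,s}\big)^{1/q}$ and the stated $\nu>N/s$ (after noting $\|\varphi\|_{L^\nu}^{N/s}$ should be read correctly — one can instead absorb the exponent by taking $\nu=q'$ times any fixed exponent $\geq N/s$, or simply restate with $\nu=q'$, which is $>1$; to land exactly on the paper's phrasing one picks $\nu$ slightly larger than $N/s$ and adjusts $q$ accordingly so that $\|\varphi\|_{L^\nu}$ appears to the first power).

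The main obstacle is the pointwise inequality $\Phi_\alpha(t)^q\leq C\,\Phi_{q\alpha}(t)$: one must check that raising $\Phi_\alpha$ to the power $q$ does not destroy the cancellation of the low-order terms that makes $\Phi$ integrable near infinity in a way compatible with Trudinger--Moser. This is where care is needed, because $\Phi_\alpha(t)\sim\frac{\alpha^{\lceil N/s-1\rceil}}{\lceil N/s-1\rceil!}t^{\lceil N/s-1\rceil N/(N-s)}$ as $t\to0^+$, so $\Phi_\alpha(t)^q$ vanishes faster at $0$ than $\Phi_{q\alpha}(t)$, which is harmless; and as $t\to\infty$ the dominant exponential behaves as required. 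The remaining subtlety is purely the bookkeeping of which Lebesgue exponent $\nu$ one ends up with, which is flexible since any $\nu>N/s$ can be arranged by choosing $q>1$ appropriately, and the continuous embedding $W_V^{s,N/s}(\mathbb{R}^N)\hookrightarrow L^\nu(\mathbb{R}^N)$ for $\nu\geq N/s$ is already available.
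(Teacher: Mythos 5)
Your proposal takes a genuinely different route from the paper's. Rather than Schwarz--symmetrizing, truncating to obtain a compactly supported $v=u^*-u^*(Rx_0)$, invoking the bounded--domain Trudinger--Moser inequality \eqref{TMB} on $|x|<R$, and estimating the tail $|x|\geq R$ term by term via the radial decay of $u^*$, you simply H\"olderize and appeal to the global inequality \eqref{TM}. The supporting estimates you use --- $\Phi_\alpha(t)^q\leq C\,\Phi_{q\alpha}(t)$, the scaling identity $\Phi_{q\alpha}(u)=\Phi_{q\alpha\|u\|^{N/(N-s)}}(u/\|u\|)$, and the monotonicity of $\Phi_\alpha$ in $\alpha$ --- are all correct, and the $\|\varphi\|_{L^\nu(\mathbb{R}^N)}^{N/s}$ bookkeeping slip you yourself flag is harmless: choosing $q\in\left(1,\tfrac{N}{N-s}\right)$ close enough to $1$ that $q\alpha K^{N/(N-s)}<\alpha_{N,s}$, one sets $\nu=q'>N/s$ and the factor enters to the first power.

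The genuine gap is the appeal to \eqref{TM} with only the hypothesis $[u]_{s,N/s}\leq 1$: as written, that statement cannot hold and is not usable as a black box. Since $sp=N$ here, the Gagliardo seminorm $[\,\cdot\,]_{s,N/s}$ is dilation invariant, so if $\chi$ is a fixed bump normalized to $[\chi]_{s,N/s}=1/2$ then $u_R(x)=\chi(x/R)$ satisfies $[u_R]_{s,N/s}=1/2$ for every $R$ while $\int_{\mathbb{R}^N}\Phi_\alpha(u_R)\,dx\geq |B_R|\,\Phi_\alpha(1)\to\infty$; no uniform constant $C_{N,s}$ exists over that class. The Kozono--Sato--Wadade inequality actually requires a bound on the \emph{full} norm of $u$ (the $L^{N/s}$ piece as well as the homogeneous piece), and if you rerun your scaling under that hypothesis you find that the $W^{s,N/s}(\mathbb{R}^N)$-norm of $u/\|u\|$ need not be $\leq 1$ (it fails once $V_0<1$), so the strict inequality $K<(\alpha_{N,s}/\alpha)^{(N-s)/N}$ no longer leaves room to pick $q>1$. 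This is exactly the obstruction the paper's construction is designed to get around: on a bounded ball the seminorm alone controls the exponential integral by \eqref{TMB}, the symmetrize--truncate step reduces the inner piece to that bounded--domain setting, and the radial bound \eqref{RI} gives enough pointwise decay to handle the outer tail. Repairing your outline essentially forces you back to that argument.
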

\begin{proof}
Our proof is motivated by \cite{do}. We may assume $u,\varphi\geq0$, since we can replace $u,\varphi$ by $|u|$ and $|\varphi|$, respectively. To use the Schwarz symmetrization method, we briefly recall some  basic properties (see \cite{GPS}). Let $1\leq p<\infty$ and $u\in
L^p(\mathbb{R}^N)$ such that $u\geq 0$.
Then there is a unique nonnegative function $u^*\in
L^p(\mathbb{R}^N)$, called the Schwarz symmetrization of $u$, which
depends only on $|x|$, $u^*$ is a
decreasing function of $|x|$; and for all $\lambda>0$
\begin{align*}
|\{x:u^*(x)\geq \lambda\}|=|\{x:u(x)\geq \lambda\}|,
\end{align*}
and there exists $R_\lambda>0$ such that $\{x:u^*(x)\geq\lambda\}$ is the ball $B_{R_\lambda}(0)$
of radius $R_\lambda$ centered at origin. Moreover, for
any continuous and increasing function $G:[0,\infty)\rightarrow[0,\infty)$ such that $G(0)=0$,
\begin{align*}
\int_{\mathbb{R}^N}G(u^*(x))dx=
\int_{\mathbb{R}^N}G(u(x))dx.
\end{align*}
Furthermore, if $u\in W^{s,N/s}(\mathbb{R}^N)$, then
$u^*\in W^{s,N/s}(\mathbb{R}^N)$ and
\begin{align}\label{eq2.1}
\iint_{\mathbb{R}^{2N}}\frac{|u^*(x)-u^*(y)|^{N/s}}
{|x-y|^{2N}}dxdy\leq
\iint_{\mathbb{R}^{2N}}\frac{|u(x)-u(y)|^{N/s}}
{|x-y|^{2N}}dxdy,
\end{align}
see \cite{AL,Baernstein}.

According to the property of the Schwarz symmetrization (see \cite{AL, FB, CZ}), for $u,\varphi\in W_V^{s,N/s}(\mathbb{R}^N)$, we can conclude that
\begin{align*}
\int_{\mathbb{R}^N}\Phi_\alpha(u)|\varphi|dx
\leq\int_{\mathbb{R}^N}\Phi_\alpha(u^*)|\varphi^*|dx,
\end{align*}
where $u^*, \varphi^*$ are the Schwarz symmetrization of $u$ and $\varphi$, respectively. Applying H\"{o}lder's inequality, we get
\begin{align}\label{eq2.2}
\int_{\{|x|\leq R\}}\Phi_\alpha(u)|\varphi|dx
&\leq \int_{\mathbb{R}^N}\Phi_\alpha(u^*)|\varphi^*|dx\nonumber\\
&\leq \int_{\{|x|\leq R\}}\exp(\alpha|u^*|^{N/N-s})|\varphi^*|dx\nonumber\\
&\leq\left(\int_{\{|x|\leq R\}}\exp(r\alpha|u^*|^{N/N-s})dx\right)^{1/r}\left(\int_{|x|\leq R}|\varphi^*|^{\nu}dx\right)^{1/\nu},
\end{align}
where $r>1$ is sufficiently close to 1 so that $r\alpha<\alpha_{N,s}$ and $\nu=r(r-1)^{-1}>N/s$ and $R>0$ is a number to be determined later.

Let us recall two elementary inequalities. Since the function $g:[0,\infty)\rightarrow\mathbb{R}$ given by
\begin{align*}
g(t)=
\begin{cases}
\left[(t+1)^{\frac{N}{N-s}}-t^{\frac{N}{N-s}}-1\right]\big/t^{\frac{s}{N-s}},\ &\mbox{if}\ t\neq0,\\
0, \ \ &\mbox{if}\ t=0.
\end{cases}
\end{align*}
 is bounded on $[0,\infty)$, there exists $A=A(N,s)>0$ such that
\begin{align}\label{eq2.3}
(u+v)^{\frac{N}{N-s}}\leq
u^{\frac{N}{N-s}}+Au^{\frac{s}{N-s}}v+v^{\frac{N}{N-s}},\ \forall u,v\geq 0.
\end{align}
If $\epsilon$ and $\epsilon^\prime$ are positive real numbers
such that $\epsilon+\epsilon^\prime=1$, then for all $\varepsilon>0$, by the Young inequality we have
\begin{align}\label{eq2.4}
u^\epsilon v^{\epsilon^\prime}\leq\varepsilon u+\varepsilon^{-\frac{\epsilon}{\epsilon^\prime}} v,\ \ \forall u,v\geq 0.
\end{align}

Let
\begin{align*}
v(x)=\begin{cases}
u^*(x)-u^*(Rx_0)\ \ &{\rm if}\ x\in B_R(0),\\
0\ \  &{\rm if}\ x\in\mathbb{R}^N\setminus B_R(0),
\end{cases}
\end{align*}
where $x_0$ is some fixed point in $\mathbb{R}^N$ with $|x_0|=1$.
If $x\in\mathbb{R}^N\setminus B_R$ and $y\in B_R$, then
\begin{align*}
u^*(x)\leq u^*(y)\ \ {\rm and}\  u^*(x)\leq u^*(Rx_0)\leq u^*(y),
\end{align*}
since $u^*(x)$ is a decreasing function with respect to $|x|$.
Thus,
\begin{align*}
&\iint_{\mathbb{R}^{2N}}\frac{|v(x)-v(y)|^{N/s}}{|x-y|^{2N}}dxdy\\
&=\int_{B_R}\int_{B_R}\frac{|u^*(x)-u^*(y)|^{N/s}}{|x-y|^{2N}}dxdy
+2\int_{B_R}\int_{\mathbb{R}^N\setminus B_R}\frac{|u^*(x)-u^*(Rx_0)|^{N/s}}{|x-y|^{2N}}dxdy\\
&\leq \int_{B_R}\int_{B_R}\frac{|u^*(x)-u^*(y)|^{N/s}}{|x-y|^{2N}}dxdy
+2\int_{B_R}\int_{\mathbb{R}^N\setminus B_R}\frac{|u^*(x)-u^*(y)|^{N/s}}{|x-y|^{2N}}dxdy\\
&=\iint_{\mathbb{R}^{2N}}\frac{|u^*(x)-u^*(y)|^{N/s}}{|x-y|^{2N}}dxdy,
\end{align*}
which means that $v\in W_0^{s,N/s}(\Omega)$.

 By \eqref{eq2.3} and \eqref{eq2.4}, we obtain
\begin{align*}
|u^*(x)|^{\frac{N}{N-s}}=|v+u^*(Rx_0)|^{\frac{N}{N-s}}
\leq |v|^{\frac{N}{N-s}}+A|v|^{\frac{s}{N-s}}u^*(Rx_0)
+|u^*(Rx_0)|^{\frac{N}{N-s}},
\end{align*}
and
\begin{align*}
v^{\frac{s}{N-s}}u^*(Rx_0)
=(v^{\frac{N}{N-s}})^{\frac{s}{N}}[(u^*(Rx_0))^{\frac{N}{N-s}}]^{\frac{N-s}{N}}
\leq \frac{\varepsilon}{A}|v|^{\frac{N}{N-s}}+\left(\frac{\varepsilon}{A}\right)^{-\frac{s}{N-s}}
(u^*(Rx_0))^{\frac{N}{N-s}}.
\end{align*}
It follows that
\begin{align*}
|u^*|^{\frac{N}{N-s}}\leq (1+\varepsilon)|v|^{\frac{N}{N-s}}+
C(\varepsilon,s,N)(u^*(Rx_0))^{\frac{N}{N-s}},
\end{align*}
where $C(\varepsilon,s,N)=A^{\frac{N}{N-s}}\varepsilon^{\frac{s}{s-N}}
+1$. Therefore,
\begin{align*}
\int_{|x|<R}\exp(\alpha r|u^*|^{\frac{N}{N-s}})dx
&\leq\exp\left(C(\varepsilon,s,N)u^*(Rx_0)^{\frac{N}{N-s}}\right)
\int_{|x|<R}\exp(r\alpha|(1+\varepsilon)v|^{\frac{N}{N-s}})dx\\
&\leq\exp\left(C(\varepsilon,s,N)u^*(Rx_0)^{\frac{N}{N-s}}\right)
\int_{|x|<R}\exp\left(r\alpha((1+\varepsilon)\|v\|)^{\frac{N}{N-s}}|\frac{v}{\|v\|}|^{\frac{N}{N-s}}\right)dx.
\end{align*}
Choosing $\varepsilon>0$ and $K$ small enough such that $$r[(1+\varepsilon)\|v\|]^{\frac{N}{N-s}}\alpha
\leq r[(1+\varepsilon)\|u^*\|]^{\frac{N}{N-s}}\alpha\leq r[(1+\varepsilon)K]^{\frac{N}{N-s}}\alpha<
\alpha_{N,s},$$
we get
\begin{align*}
\int_{|x|<R}\exp(r\alpha|(1+\varepsilon)v|^{\frac{N}{N-s}})dx\leq C_{N,s}|B_R(0)|,
\end{align*}
thanks to the Trudinger-Moser inequality on bounded domains.
Hence, we obtain
\begin{align*}
\int_{|x|<R}\exp(\alpha r|u^*|^{\frac{N}{N-s}})dx
\leq C_{N,s}\frac{\omega_{N-1}R^N}{N}\exp\left(C(\varepsilon,s,N)u^*(Rx_0)^{\frac{N}{N-s}}\right).
\end{align*}
Furthermore, \eqref{RI} yields that
\begin{align*}
\int_{|x|<R}\exp(\alpha r|u^*|^{\frac{N}{N-s}})dx
\leq C_{N,s}\frac{\omega_{N-1}R^N}{N}
\exp\left(C(\varepsilon,s,N)|R|^{-\frac{sN}{N-s}}
\left(\frac{N}{\omega_{N-1}}\right)^{\frac{s}{N-s}}K^{\frac{N}{N-s}}\right).
\end{align*}
Therefore, by \eqref{eq2.2}, we arrive at
\begin{align}\label{eq2.5}
\int_{|x|< R}\Phi_\alpha(u)|\varphi|dx\leq C(N,\alpha,s,K)\|\varphi\|_{L^\nu(\mathbb{R}^N)}.
\end{align}

On the other hand, we have
\begin{align*}
\int_{|x|\geq R}\Phi_\alpha(u^*)|\varphi^*|dx
=\int_{|x|\geq R}\sum_{j=k_0}^\infty\frac{\alpha^j|u^*|^{\frac{N}{N-s}j}|\varphi^*|}{j!}dx,
\end{align*}
where $k_0$ is the smallest integer such that $k_0\geq p-1$.
Using \eqref{RI}, $\|u^*\|\leq\|u\|\leq K$ and the H\"{o}lder inequality, we get
\begin{align}\label{eq2.7}
&\int_{|x|\geq R}|u^*|^{Nj/(N-s)}|\varphi^*|dx \nonumber\\
&\leq \left(\left(\frac{N}{V_0\omega_{N-1}}\right)^{s/N}\|u^*\|\right)^{Nj/(N-s)}
\int_{|x|\geq R}\frac{|\varphi^*|}{|x|^{Nsj/(N-s)}}dx\nonumber\\
&\leq \left(\left(\frac{N}{V_0\omega_{N-1}}\right)^{s/N}\|u^*\|\right)^{Nj/(N-s)}
\left(\int_{|x|\geq R}\frac{1}{|x|^{rNsj/(N-s)}}dx\right)^{1/r}
\left(\int_{|x|\geq R}|\varphi^*|^\nu dx\right)^{1/\nu}\nonumber\\
&\leq \omega_{N-1}R^N\left(\left(\frac{N}{V_0\omega_{N-1}}\right)^{s/N}R^{-rs}K\right)^{Nj/(N-s)}\|\varphi\|_{L^\nu(\mathbb{R}^N)},
\end{align}
for all $j\geq k_0>p-1$.
Choosing
\begin{align*}
R^{rs}=K\left(\frac{N}{V_0\omega_{N-1}}\right)^{s/N},
\end{align*}
we can deduce from \eqref{eq2.7} that
\begin{align}\label{eq2.8}
\int_{|x|\geq R}|u^*|^{Nj/(N-s)}|\varphi^*|dx\leq C(N,s,K)\|\varphi\|_{L^\nu(\mathbb{R}^N)}\ \ {\rm for\ all}\ j\geq k_0.
\end{align}
For the case $k_0=p-1$, we have by the H\"{o}lder inequality,
\begin{align}\label{eq2.9}
\int_{|x|\geq R}|u^*|^{N/s}|\varphi^*|dx
&\leq \left(\int_{|x|\geq R}|u^*|^{rN/s}dx\right)^{1/r}
 \left(\int_{|x|\geq R}|\varphi^*|^{\nu}dx\right)^{1/\nu}\nonumber\\
&\leq C(N,s,K)\|\varphi\|_{L^\nu(\mathbb{R}^N)}.
\end{align}
Here we have used the continuous embedding $W_{V}^{s,N/s}(\mathbb{R}^N)\hookrightarrow L^{Nr/s}(\mathbb{R}^N)$.

Combining \eqref{eq2.8} and \eqref{eq2.9}, we can conclude that
\begin{align*}
\int_{|x|\geq R}\Phi_\alpha(u^*)|\varphi^*|dx&=\sum_{j=k_0}^\infty\frac{\alpha^j}{j_!}\int_{|x|\geq R}|u^*|^{Nj/(N-s)}|\varphi^*|dx\\
&\leq C(N,s,K)\|\varphi\|_{L^\nu(\mathbb{R}^N)}\sum_{j=k_0}^\infty\frac{\alpha^j}{j_!}\\
&\leq C(N,s,K)\|\varphi\|_{L^\nu(\mathbb{R}^N)}\exp(\alpha_{N,s}),
\end{align*}
which together with \eqref{eq2.5} yields the desired result.
\end{proof}
Similarly, we can obtain the following lemma.
\begin{lemma}\label{lem2.3}
Let $\alpha<\alpha_{N,s}$, $N/s<q$, $u\in W_V^{s,N/s}(\mathbb{R}^N)$ and $\|u\|\leq K$ with $0<K<\left(\frac{\alpha_{N,s}}{\alpha}\right)^{(N-s)/N}$. Then there exists $C(N,s,\alpha,K)>0$ such that
\begin{align*}
\int_{\mathbb{R}^N}\Phi_\alpha(u)|u|^qdx\leq C(N,s,\alpha,K)\|u\|^q.
\end{align*}
\end{lemma}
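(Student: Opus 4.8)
The proof follows the pattern of Lemma~\ref{lem2.2}, but admits a shortcut that bypasses the Schwarz symmetrization: since the weight $|u|^q$ shares the integrability of $u$ (all powers $\ge N/s$ are integrable on $\mathbb{R}^N$ for $u\in W_V^{s,N/s}(\mathbb{R}^N)$), it suffices to separate the exponential part of $\Phi_\alpha(u)$ from the algebraic factor $|u|^q$ by H\"older's inequality, then to control the exponential factor by the global fractional Trudinger--Moser inequality \eqref{TM} after a rescaling, and the algebraic factor by the continuous embedding $W_V^{s,N/s}(\mathbb{R}^N)\hookrightarrow L^{qr'}(\mathbb{R}^N)$.

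First I would fix $r>1$ so close to $1$ that $r\alpha K^{N/(N-s)}<\alpha_{N,s}$; this is possible because the hypothesis $K<(\alpha_{N,s}/\alpha)^{(N-s)/N}$ says exactly that $\alpha K^{N/(N-s)}<\alpha_{N,s}$. Put $r'=r/(r-1)$, so that $qr'>N/s$. H\"older's inequality then gives
\begin{align*}
\int_{\mathbb{R}^N}\Phi_\alpha(u)|u|^q\,dx\leq\Big(\int_{\mathbb{R}^N}\Phi_\alpha(u)^r\,dx\Big)^{1/r}\Big(\int_{\mathbb{R}^N}|u|^{qr'}\,dx\Big)^{1/r'}.
\end{align*}
The last factor equals $\|u\|_{L^{qr'}(\mathbb{R}^N)}^{q}$, which is $\leq(C_{qr'}\|u\|)^{q}$ by the continuous embedding. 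For the first factor I would use the pointwise comparison $\Phi_\alpha(t)^r\leq C(N,s,\alpha,r)\,\Phi_{r\alpha}(t)$ for $t\geq0$: reading off \eqref{ex}, the lowest-order term of $\Phi_\alpha$ is $\frac{\alpha^{k_0}}{k_0!}t^{k_0N/(N-s)}$ with $k_0\geq1$ the least integer $\geq N/s-1$ (independent of the parameter), so $\Phi_\alpha(t)^r/\Phi_{r\alpha}(t)\to0$ as $t\to0^+$; and since the polynomial corrections in \eqref{ex} are of lower order, $\Phi_\alpha(t)\sim\exp(\alpha t^{N/(N-s)})$ as $t\to\infty$, so the ratio tends to $1$ there. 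Being positive and continuous on $(0,\infty)$ with these finite limits at the endpoints, the ratio is bounded; hence $\int_{\mathbb{R}^N}\Phi_\alpha(u)^r\,dx\leq C\int_{\mathbb{R}^N}\Phi_{r\alpha}(u)\,dx$.

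It then remains to bound $\int_{\mathbb{R}^N}\Phi_{r\alpha}(u)\,dx$ uniformly. If $[u]_{s,N/s}=0$ then $u\equiv0$ and the claim is trivial; otherwise set $w=u/[u]_{s,N/s}$, so $[w]_{s,N/s}=1$, and note the scaling identity $\Phi_{r\alpha}(u)=\Phi_{r\alpha[u]_{s,N/s}^{N/(N-s)}}(w)$, immediate from \eqref{ex} since each term $\frac{(r\alpha)^j}{j!}|u|^{jN/(N-s)}$ equals $\frac{(r\alpha[u]_{s,N/s}^{N/(N-s)})^j}{j!}|w|^{jN/(N-s)}$. As $[u]_{s,N/s}\leq\|u\|\leq K$, the rescaled parameter obeys $r\alpha[u]_{s,N/s}^{N/(N-s)}\leq r\alpha K^{N/(N-s)}<\alpha_{N,s}$, so \eqref{TM} yields $\int_{\mathbb{R}^N}\Phi_{r\alpha}(u)\,dx\leq C_{N,s}$. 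Combining the three estimates gives $\int_{\mathbb{R}^N}\Phi_\alpha(u)|u|^q\,dx\leq C(N,s,\alpha,K,q)\|u\|^q$, which is the assertion. The only step needing a little care is the comparison $\Phi_\alpha^r\lesssim\Phi_{r\alpha}$; if one prefers to stay closer to Lemma~\ref{lem2.2}, one can instead symmetrize, split $\mathbb{R}^N$ into $B_R$ and its complement, apply the bounded-domain Trudinger--Moser inequality with the truncation $v$ on $B_R$ and the radial decay \eqref{RI} on the complement, exactly as there but with $|u^*|^q$ playing the role of $|\varphi^*|$ (using $\|u^*\|_{L^{q\nu}}=\|u\|_{L^{q\nu}}\le C\|u\|$ by equimeasurability) — this is the "similar" argument the statement alludes to.
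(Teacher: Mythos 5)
Your primary argument --- H\"older split, the pointwise comparison $\Phi_\alpha^r\lesssim\Phi_{r\alpha}$, rescaling, and the global Trudinger--Moser inequality \eqref{TM} --- is genuinely different from the paper's intended proof, which (as the word ``similarly'' before the lemma indicates) simply re-runs the Schwarz-symmetrization argument of Lemma~\ref{lem2.2} with $|u^*|^q$ in place of $|\varphi^*|$; this is the alternative you describe in your last sentence, and it is indeed the paper's route. The pointwise comparison $\Phi_\alpha(t)^r\le C\Phi_{r\alpha}(t)$ and the scaling identity $\Phi_{r\alpha}(u)=\Phi_{r\alpha[u]_{s,N/s}^{N/(N-s)}}(u/[u]_{s,N/s})$ are both correct and tidy observations. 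But the last step --- invoking \eqref{TM} to get $\int_{\mathbb{R}^N}\Phi_{r\alpha}(u)\,dx\leq C_{N,s}$ uniformly --- has a hidden gap. As written, \eqref{TM} cannot hold: for $p=N/s$ the Gagliardo seminorm is dilation-invariant ($[u(\lambda\cdot)]_{s,N/s}=[u]_{s,N/s}$), so spreading a fixed profile out ($\lambda\to 0$) keeps $[u]_{s,N/s}\leq 1$ while $\int_{\mathbb{R}^N}\Phi_\alpha(u(\lambda\cdot))\,dx=\lambda^{-N}\int_{\mathbb{R}^N}\Phi_\alpha(u)\,dx\to\infty$. The actual Kozono--Sato--Wadade inequality reads $\int_{\mathbb{R}^N}\Phi_\alpha(u)\,dx\leq C_{N,s}\,\|u\|_{L^{N/s}(\mathbb{R}^N)}^{N/s}$ under the constraint $[u]_{s,N/s}\leq 1$.

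Under that correct form, your normalization $w=u/[u]_{s,N/s}$ has $\|w\|_{L^{N/s}}=\|u\|_{L^{N/s}}/[u]_{s,N/s}$, which is unbounded as $[u]_{s,N/s}\to 0$ even with $\|u\|\leq K$ fixed; so the bound you extract from \eqref{TM} degenerates exactly when the seminorm is small, and you do not obtain the required uniform constant. The patch is a case split. Pick $r>1$ so close to $1$ that both $r\alpha<\alpha_{N,s}$ and $r\alpha K^{N/(N-s)}<\alpha_{N,s}$. If $[u]_{s,N/s}\leq 1$, apply the Kozono et al.\ inequality to $u$ itself (no rescaling), giving $\int\Phi_{r\alpha}(u)\,dx\leq C\|u\|_{L^{N/s}}^{N/s}\leq CV_0^{-1}K^{N/s}$. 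If $[u]_{s,N/s}>1$, rescale as you did; then $\|w\|_{L^{N/s}}<\|u\|_{L^{N/s}}\leq V_0^{-s/N}K$ and the same bound follows. With this patch your primary route is a valid and noticeably shorter alternative to the paper's symmetrization proof; without it, the argument rests on a form of \eqref{TM} that is too strong to be true.
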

To study the nonnegative solutions of equation \eqref{eq1}, we define the associated functional $I_\lambda:W_{V}^{s,N/s}(\mathbb{R}^N)\to\mathbb{R}$ as
\begin{align*}
& I_\lambda(u)=\frac{s}{N}\mathscr{M}(\|u\|^{N/s})
-\int_{\mathbb{R}^N}F(x,u)dx-
\lambda\int_{\mathbb{R}^N}h(x)|u^+|^{p}dx,
\end{align*}
where $u^+=\max\{u,0\}$.
Under the assumption $(f_1)$ and the fractional Trudinger-Moser inequality, one can verify that $I_\lambda$ is well defined, of class $C^1(W_{V}^{s,N/s}(\mathbb{R}^N),\mathbb{R})$, and
\begin{align*}
\langle I_\lambda^\prime(u),v\rangle=
M(\|u\|^{N/s})&\left(\langle u,v\rangle_{s,N/s}+\int_{\mathbb{R}^N}V(x)|u|^{N/s-2}uvdx\right)\\
&-\int_{\mathbb{R}^N}f(x,u)vdx-
\lambda\int_{\mathbb{R}^N}h|u^+|^{p-2}u^+vdx,
\end{align*}
for all $u,v\in W_V^{s,N/s}(\mathbb{R}^N)$. Hereafter, $\langle\cdot,\cdot\rangle$ denotes the
duality pairing between $\big(W_V^{s,N/s}(\mathbb{R}^N)\big)'$ and
$W_V^{s,N/s}(\mathbb{R}^N)$.
Clearly, the critical points of $I_\lambda$ are exactly the weak solutions of equation \eqref{eq1}.
Moreover, the following lemma shows that every nontrivial weak solution of problem \eqref{eq1} is nonnegative.
\begin{lemma}\label{lem3.1}
Let $(M_1)$ and $(f_1)$ hold. If $h(x)\geq 0$ for almost every $x\in\mathbb{R}^N$, then for all $\lambda>0$ any nontrivial critical point of
functional $ I_\lambda$ is nonnegative.
\end{lemma}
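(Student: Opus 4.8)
The plan is the standard argument: test the Euler--Lagrange equation against the negative part of $u$ and exploit both the structure of the nonlinearity and the strict positivity of $M$ on $(0,\infty)$. Let $u\in W_V^{s,N/s}(\mathbb{R}^N)$ be a nontrivial critical point of $I_\lambda$ and set $u^-=\min\{u,0\}$. Since $|u^-(x)-u^-(y)|\le|u(x)-u(y)|$ pointwise and $\int_{\mathbb{R}^N}V|u^-|^{N/s}\,dx\le\int_{\mathbb{R}^N}V|u|^{N/s}\,dx<\infty$, the function $u^-$ belongs to $W_V^{s,N/s}(\mathbb{R}^N)$, so it is an admissible test function in $\langle I_\lambda'(u),\cdot\rangle=0$.

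First I would check that the right-hand side of the weak formulation vanishes for $v=u^-$. Indeed $f(x,t)\equiv0$ for $t\le0$, so $f(x,u)\,u^-=0$ a.e.\ (zero on $\{u\le0\}$ because $f$ vanishes there, zero on $\{u>0\}$ because $u^-$ vanishes there); similarly $|u^+|^{p-2}u^+u^-\equiv0$ since $u^+$ and $u^-$ have disjoint supports, so the term with $h\ge0$ is irrelevant. Hence
\[
M(\|u\|^{N/s})\left(\langle u,u^-\rangle_{s,N/s}+\int_{\mathbb{R}^N}V|u|^{N/s-2}u\,u^-\,dx\right)=0 .
\]
Because $u$ is nontrivial we have $\|u\|>0$, and $(M_1)$ applied with $\tau=\|u\|^{N/s}$ gives $M(\|u\|^{N/s})>0$; hence the parenthesis must vanish.

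The core of the proof is the pointwise inequality for the fractional $N/s$--Laplacian. For the potential part one computes directly $\int_{\mathbb{R}^N}V|u|^{N/s-2}u\,u^-\,dx=\int_{\{u\le0\}}V|u|^{N/s}\,dx=\|u^-\|_{N/s,V}^{N/s}\ge0$. For the Gagliardo part I would establish the elementary inequality
\[
|a-b|^{N/s-2}(a-b)(a^--b^-)\ge|a^--b^-|^{N/s}\ge0\qquad\text{for all }a,b\in\mathbb{R},
\]
where $a^-=\min\{a,0\}$, by distinguishing the three cases $a,b\ge0$ (both sides zero), $a,b\le0$ (equality), and $a\ge0\ge b$; in the last case $a-b\ge-b\ge0$ together with $N/s-1>1$ forces $(a-b)^{N/s-1}(-b)\ge(-b)^{N/s}$. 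Integrating this against $|x-y|^{-2N}$ over $\mathbb{R}^{2N}$ yields $\langle u,u^-\rangle_{s,N/s}\ge[u^-]_{s,N/s}^{N/s}\ge0$.

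Putting the two estimates together, $0=\langle u,u^-\rangle_{s,N/s}+\int_{\mathbb{R}^N}V|u|^{N/s-2}u\,u^-\,dx\ge[u^-]_{s,N/s}^{N/s}+\|u^-\|_{N/s,V}^{N/s}=\|u^-\|^{N/s}$, so $\|u^-\|=0$, i.e.\ $u\ge0$ a.e. The only genuine obstacle is verifying the scalar inequality above (a convexity/monotonicity computation on $\mathbb{R}$); the remaining steps are bookkeeping. Note that $(f_1)$ enters only through the already-established fact that $I_\lambda\in C^1$, which makes the critical point equation meaningful.
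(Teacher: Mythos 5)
Your proof is correct and follows essentially the same route as the paper: test $\langle I_\lambda'(u),\cdot\rangle=0$ against $\min\{u,0\}$, observe that the $f$-term and the $h$-term drop out (from the standing assumption $f(x,t)\equiv0$ for $t\le0$ and the disjointness of $\{u>0\}$ and $\{u<0\}$), invoke $(M_1)$ to factor out $M(\|u\|^{N/s})>0$, and then use a pointwise inequality for the Gagliardo kernel to dominate $\|u^-\|^{N/s}$. The only real difference is cosmetic: the paper sets $u_\lambda^-=\max\{-u,0\}\ge0$ and tests against $-u_\lambda^-$, and it verifies the pointwise inequality by expanding $u=u^+-u^-$ and using the disjoint supports of $u^\pm$ (the displayed intermediate identity there has a small typo in the last term, but the final inequality is correct), whereas you prove the scalar inequality
\[
|a-b|^{N/s-2}(a-b)(a^--b^-)\ge|a^--b^-|^{N/s},\qquad a^-=\min\{a,0\},
\]
directly by a case split. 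Your case analysis is in fact slightly cleaner and more self-contained; the only thing worth adding is an explicit remark that the case $b\ge0\ge a$ follows by symmetry of both sides under $a\leftrightarrow b$, and that the monotonicity step only needs $N/s-1>0$ (your stronger claim $N/s-1>1$ does hold here since $N\ge2$ and $s<1$, but is unnecessary).
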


\begin{proof}
Fix $\lambda>0$ and let $u_\lambda\in W_V^{s,N/s}(\mathbb{R}^N)\setminus\{0\}$ be a critical point of functional $ I_\lambda$. Clearly,
$u_\lambda^-=\max\{-u,0\}\in W_V^{s,N/s}(\mathbb{R}^N)$. Then
$\langle  I_\lambda^\prime(u_\lambda),-u_\lambda^-\rangle=0$, a.e.
\begin{align*}
M(\|u_\lambda\|^{N/s})&\left(\langle u_\lambda,u_\lambda^-\rangle_{s,N/s}
+\int_{\mathbb{R}^N}V|u_\lambda|^{N/s-2}u_\lambda(-u_\lambda^-)dx\right)\\
&\qquad=\int_{\mathbb{R}^N}f(x,u_\lambda)(-u_\lambda^-)dx
+\lambda\int_{\mathbb{R}^N}h|u_\lambda^+|^{p-2}u_\lambda^+(-u_\lambda^-)dx.
\end{align*}
We observe that for a.e. $x, y\in\mathbb{R}^N$,
\begin{align*}
&|u_\lambda(x)-u_\lambda(y)|^{N/s-2}(u_\lambda(x)-u_\lambda(y))(-u_\lambda^-(x)+u_\lambda(y)^-)\\
=&|u_\lambda(x)-u_\lambda(y)|^{N/s-2}u_\lambda^+(x)u_\lambda^-(y)
+|u_\lambda(x)-u_\lambda(y)|^{N/s-2}u_\lambda^-(x)u_\lambda^+(y)+[u_\lambda^-(x)-u_\lambda^-(y)]^{N/s}\\
\geq &|u_\lambda^--u_\lambda^-(y)|^{N/s},
\end{align*}
$f(x,u_\lambda)u_\lambda^-=0$ a.e. $x\in\mathbb{R}^N$ by $(f_1)$ and
\begin{align*}
\int_{\mathbb{R}^N}V|u_\lambda(x)|^{N/s-2}u_\lambda(-u_\lambda^{-}(x))dx=\int_{\mathbb{R}^N}V|u_\lambda^-|^{N/s}dx.
\end{align*}
Moreover,  $h|u_\lambda^+|^{p-2}u_\lambda^+(-u_\lambda^-)\leq0$ a.e. in
$\mathbb{R}^N$. Hence,
\begin{align*}
M(\|u_\lambda\|^{N/s})\left([u_\lambda^-]_{s,N/s}^{N/s}+\|u_\lambda^-\|_{N/s,V}^{N/s}\right)\leq 0.
\end{align*}
This, together with $\|u_\lambda\|>0$ and $(M_1)$, implies that $u_\lambda^-\equiv0$, that is $u_\lambda\geq 0$ a.e. in $\mathbb{R}^N$. This completes the proof.
\end{proof}

\section{Proof of Theorem \ref{th1}}\label{sec3}

Let us recall that  $ I_\lambda$ satisfies the $(PS)_c$ {\em condition} in $W_V^{s,N/s}(\mathbb{R}^N)$, if any $(PS)_c$
	sequence $\{u_n\}_{n}\subset W_V^{s,N/s}(\mathbb{R}^N)$, namely
a sequence such that $I_\lambda(u_n)\rightarrow c$ and
$ I_\lambda^\prime (u_n)\rightarrow 0$ as $n\rightarrow\infty$, admits a strongly convergent
subsequence in $W_V^{s,N/s}(\mathbb{R}^N)$.

In the sequel, we shall make use of the following general mountain pass theorem (see \cite{AE}).
\begin{theorem}\label{MPth}
Let $E$ be a real Banach space and $J\in C^1(E,\mathbb{R})$ with $J(0)=0$. Suppose that
\begin{itemize}
\item[$(i)$] there exist $\rho,\alpha>0$ such that $J(u)\geq \alpha$ for all $u\in E$, with $\|u\|_{E}=\rho$;
\item[$(ii)$] there exists $e\in E$ satisfying $\|e\|_{E}>\rho$ such that $J(e)<0$.
\end{itemize}
Define $\Gamma=\{\gamma\in C^1([0,1];E):\gamma(0)=1,\gamma(1)=e\}$. Then
$$c=\inf_{\gamma\in\Gamma}\max_{0\leq t\leq 1} J(\gamma(t))\geq\alpha$$
and there exists a $(PS)_c$ sequence $\{u_n\}_n\subset E$.
\end{theorem}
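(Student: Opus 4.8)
The plan is to treat the two assertions separately: the bound $c\ge\alpha$ is elementary, whereas producing a $(PS)_c$ sequence requires a deformation argument.

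First I would establish $c\ge\alpha$ by a crossing argument. Note that $\Gamma\neq\emptyset$ (for instance $\gamma(t)=te$ lies in $\Gamma$), and for every $\gamma\in\Gamma$ the image $\gamma([0,1])$ is compact while $J$ is continuous, so $c$ is a well-defined real number. Fix an arbitrary $\gamma\in\Gamma$. The map $t\mapsto\|\gamma(t)\|_E$ is continuous with $\|\gamma(0)\|_E=0<\rho<\|e\|_E=\|\gamma(1)\|_E$, so by the intermediate value theorem there is $t_0\in(0,1)$ with $\|\gamma(t_0)\|_E=\rho$; hypothesis $(i)$ then gives $J(\gamma(t_0))\ge\alpha$, hence $\max_{0\le t\le1}J(\gamma(t))\ge\alpha$. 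Taking the infimum over $\gamma$ yields $c\ge\alpha>0$.

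The substantive part is the existence of a $(PS)_c$ sequence, which I would obtain by contradiction via the quantitative deformation lemma. Suppose no $(PS)_c$ sequence exists; then there are $\varepsilon_0>0$ and $\sigma>0$ with $\|J'(u)\|_{E'}\ge\sigma$ for every $u\in E$ such that $|J(u)-c|\le2\varepsilon_0$. Shrinking $\varepsilon_0$ we may also assume $2\varepsilon_0<c$, which is possible since $c\ge\alpha>0$; this guarantees that $J(0)=0$ and $J(e)<0$ both lie outside the strip $[c-2\varepsilon_0,\,c+2\varepsilon_0]$. Construct a locally Lipschitz pseudo-gradient vector field for $J$ on $\{u:J'(u)\neq0\}$ (Palais' lemma, valid on an arbitrary Banach space), truncate it so that it vanishes off the strip, and let $\eta:[0,1]\times E\to E$ be the flow of the resulting normalised negative pseudo-gradient field. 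Standard estimates produce some $\varepsilon\in(0,\varepsilon_0)$ for which $\eta$ has the usual properties: each $\eta(t,\cdot)$ is a homeomorphism of $E$, $t\mapsto J(\eta(t,u))$ is non-increasing, $\eta(1,u)=u$ whenever $J(u)\notin[c-2\varepsilon_0,c+2\varepsilon_0]$, and $\eta\big(1,\{J\le c+\varepsilon\}\big)\subset\{J\le c-\varepsilon\}$.

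Finally, by definition of the infimum pick $\gamma\in\Gamma$ with $\max_{0\le t\le1}J(\gamma(t))\le c+\varepsilon$, and set $\beta(t)=\eta(1,\gamma(t))$. Since $\eta(1,\cdot)$ is continuous and fixes $0$ and $e$, $\beta$ is a continuous path from $0$ to $e$ with $\max_{0\le t\le1}J(\beta(t))\le c-\varepsilon$. Replacing $\beta$ by a mollification keeping its endpoints and changing $\max_t J(\beta(t))$ by less than $\varepsilon/2$ — legitimate since $\beta([0,1])$ is compact, $J$ is continuous, and the infimum of $\max J$ over $C^1$ paths coincides with the same infimum over merely continuous paths — one gets an element of $\Gamma$ along which $\max J<c$, contradicting the definition of $c$; hence a $(PS)_c$ sequence exists. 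I expect the deformation lemma to be the main obstacle: since $E$ need not be Hilbert one cannot use the gradient flow and must work with a pseudo-gradient field, verifying that the truncation leaves the flow globally defined and that the sublevel sets are pushed down as claimed. An alternative route is to apply Ekeland's variational principle to the functional $\gamma\mapsto\max_t J(\gamma(t))$ on the complete metric space of paths joining $0$ to $e$; the reconciliation of the $C^1$ requirement in the definition of $\Gamma$ with the merely continuous map $\eta(1,\cdot)$ is a secondary, routine matter handled by smoothing.
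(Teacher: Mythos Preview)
The paper does not prove this statement at all: Theorem~\ref{MPth} is quoted as a known tool with a citation to Aubin--Ekeland \cite{AE}, and no argument is supplied. Your sketch is the standard proof (crossing argument for $c\ge\alpha$, then contradiction via the quantitative deformation lemma built from a pseudo-gradient field), and it is essentially correct; there is simply nothing in the paper to compare it against.

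One minor remark: the statement as printed has the typo $\gamma(0)=1$ and requires $\gamma\in C^1$, while the deformation $\eta(1,\cdot)$ is only continuous. You handled this by smoothing, which is fine, but the cleaner observation is that the $C^1$ requirement is itself an artifact (the paper later applies the theorem with $\gamma\in C([0,1];E)$, see the definition of $\Gamma$ after \eqref{eq3.77}); with continuous paths the smoothing step disappears entirely.
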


To find a mountain pass solution of problem~\eqref{eq1}, let us first verify the validity of the conditions of Theorem~\ref{MPth}.

\begin{lemma}[Mountain Pass Geometry 1]\label{lem3.2} Assume that $(V_1)$, $(f_1)$, and $(f_3)$ hold. Then for each $\lambda>0$, there exist $\rho_\lambda>0$ and $\kappa>0$ such that
$I_\lambda(u)\geq\kappa>0$ for any $u\in W_V^{s,N/s}(\mathbb{R}^N)$, with $\|u\|=\rho_\lambda$.
\end{lemma}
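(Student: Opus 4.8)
The plan is to establish the ``mountain pass geometry near the origin'' directly from the structural hypotheses, estimating each of the three terms in $I_\lambda$ on the sphere $\|u\|=\rho$ for $\rho$ small. Fix $\lambda>0$. First I would record, using $(f_3)$, that there exist $\varepsilon>0$ and $\delta>0$ with
\begin{align*}
F(x,t)\leq\Big(\tfrac{s\mathscr{M}(1)}{N}\lambda_1-\varepsilon\Big)t^{\theta N/s}\qquad\text{for all }x\in\mathbb{R}^N,\ 0<t\leq\delta,
\end{align*}
while for $t>\delta$ the growth condition $(f_1)$ together with $F(x,t)=\int_0^t f(x,\tau)\,d\tau$ gives $F(x,t)\leq C_1 t^{\theta N/s}+C_2\,t^{q}\Phi_{\alpha_0}(t)$ for any fixed $q>N/s$ (absorbing the extra power $t^q$ into the remainder, which is harmless for $t>\delta$ since $t^{q}/\delta^{q}\geq1$). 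Hence, combining the two regimes, there is a constant $C=C(\delta,q)>0$ such that
\begin{align*}
F(x,t)\leq\Big(\tfrac{s\mathscr{M}(1)}{N}\lambda_1-\varepsilon\Big)|t|^{\theta N/s}+C\,|t|^{q}\Phi_{\alpha_0}(|t|)\qquad\text{for all }(x,t)\in\mathbb{R}^N\times\mathbb{R}.
\end{align*}

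Next I would integrate this inequality. The first term is controlled by the definition of $\lambda_1$: $\int_{\mathbb{R}^N}|u|^{\theta N/s}\,dx\leq\lambda_1^{-1}\|u\|^{\theta N/s}$. For the remainder, apply Lemma~\ref{lem2.3} with $\alpha=\alpha_0<\alpha_{N,s}$: provided $\|u\|\leq K$ with $K<(\alpha_{N,s}/\alpha_0)^{(N-s)/N}$, we get $\int_{\mathbb{R}^N}\Phi_{\alpha_0}(u)|u|^{q}\,dx\leq C(N,s,\alpha_0,K)\|u\|^{q}$. Therefore for $\|u\|\leq K$,
\begin{align*}
\int_{\mathbb{R}^N}F(x,u)\,dx\leq\Big(\tfrac{s\mathscr{M}(1)}{N}-\tfrac{\varepsilon}{\lambda_1}\Big)\|u\|^{\theta N/s}+C'\|u\|^{q}.
\end{align*}
For the Kirchhoff term, I would use $(M_2)$ to get a lower bound on $\mathscr{M}$: since $tM(t)\leq\theta\mathscr{M}(t)$, the function $t\mapsto\mathscr{M}(t)/t^{\theta}$ is nonincreasing on $(0,\infty)$, so for $0<t\leq1$ we have $\mathscr{M}(t)\geq\mathscr{M}(1)t^{\theta}$; applying this with $t=\|u\|^{N/s}\leq1$ yields $\tfrac{s}{N}\mathscr{M}(\|u\|^{N/s})\geq\tfrac{s}{N}\mathscr{M}(1)\|u\|^{\theta N/s}$. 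Finally the weight term is simply nonnegative-subtracted: $\lambda\int_{\mathbb{R}^N}h|u^+|^{p}\,dx\leq\lambda\|h\|_{\infty}\int_{\mathbb{R}^N}|u|^{p}\,dx\leq\lambda\|h\|_{\infty}C_p^{p}\|u\|^{p}$, using the continuous embedding into $L^p$ (valid since $p>\theta N/s\geq N/s$ in the Theorem~\ref{th1} setting).

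Putting the three estimates together, for $\|u\|=\rho\leq\min\{1,K\}$,
\begin{align*}
I_\lambda(u)\geq\tfrac{\varepsilon}{\lambda_1}\rho^{\theta N/s}-C'\rho^{q}-\lambda\|h\|_{\infty}C_p^{p}\rho^{p}
=\rho^{\theta N/s}\Big(\tfrac{\varepsilon}{\lambda_1}-C'\rho^{q-\theta N/s}-\lambda\|h\|_{\infty}C_p^{p}\rho^{p-\theta N/s}\Big).
\end{align*}
Since $q>N/s$ can be chosen with $q>\theta N/s$ and since $p>\theta N/s$, both exponents $q-\theta N/s$ and $p-\theta N/s$ are positive, so the bracket tends to $\varepsilon/\lambda_1>0$ as $\rho\to0^+$. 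Hence there exists $\rho_\lambda>0$ (depending on $\lambda$ through the coefficient $\lambda\|h\|_\infty C_p^p$) with $\rho_\lambda\leq\min\{1,K\}$ and a constant $\kappa=\kappa(\lambda)>0$ such that $I_\lambda(u)\geq\kappa$ whenever $\|u\|=\rho_\lambda$. The main technical point to be careful about is the splitting of $F$ in the first paragraph — one must verify that the ``bad'' exponential part genuinely carries a power strictly above $N/s$ so that Lemma~\ref{lem2.3} applies and the corresponding term is superlinear in $\rho$; this is exactly why the factor $|t|^q$ with $q>N/s$ is introduced rather than using $\Phi_{\alpha_0}$ alone.
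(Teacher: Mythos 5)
Your argument reproduces the paper's own proof almost exactly: same two-regime splitting of $F$ via $(f_3)$ near the origin and $(f_1)$ with an extra power $|t|^q$ away from it, same use of the definition of $\lambda_1$ and of Lemma~\ref{lem2.3} to integrate, same lower bound $\mathscr{M}(t)\geq\mathscr{M}(1)t^\theta$ from $(M_2)$, and the same conclusion by comparing the exponents $\theta N/s<p,q$ as $\rho\to0^+$. If anything, you are slightly more careful than the paper in making explicit that $q$ must be chosen $>\theta N/s$ (not merely $>N/s$) and that $\rho_\lambda$ must stay below the Trudinger--Moser threshold $K<(\alpha_{N,s}/\alpha_0)^{(N-s)/N}$, but the approach is identical.
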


\begin{proof}
By $(f_3)$,  there exists $\tau,\delta,C_\varepsilon>0$ such that for all $|u|\leq\delta$
\begin{align}\label{c4}
|F(x,u)|\leq \frac{s\mathscr{M}(1)}{N}(\lambda_1-\tau)|u|^{N/s}\quad\mbox{for all } x\in\mathbb{R}^N.
\end{align}
Moreover, by $(f_1)$, for each $q>N/s$, we can find a constant $C=C(q,\delta)>0$ such that
\begin{align}\label{c5}
F(x,u)\leq C|u|^q\Phi_\alpha(u)
\end{align}
for all $|u|\geq \delta$ and $x\in\mathbb{R}^N$. Combining \eqref{c4}
and \eqref{c5}, we obtain
\begin{align}\label{c6}
F(x,u)\leq \frac{s\mathscr{M}(1)}{N}(\lambda_1-\tau)|u|^{N/s}+C|u|^q\Phi_\alpha(u)
\end{align}
for all $u\in\mathbb{R}$ and $x\in\mathbb{R}^N$.

On the other hand, $(M_2)$ gives
 \begin{align}\label{M1}
 \mathscr{M}(t)\geq \mathscr{M}(1)t^\theta\quad\mbox{for all }t\in [0,1].
 \end{align}
Thus, by using  \eqref{c6}, \eqref{M1} and the H\"{o}lder inequality,  we obtain for all $u\in W_V^{s,N/s}(\mathbb{R}^N)$, with $\|u\|\leq 1$ small enough,
\begin{align*}
I_\lambda(u)&\geq\frac{s\mathscr{M}(1)}{N}\|u\|^{\theta N/s}-\frac{s\mathscr{M}(1)}{N}\frac{(\lambda_1-\tau)}{\lambda_1}\|u\|^{\theta N/s}- C(N,s,\alpha)\|u\|^q-\lambda
|h|_\infty\|u\|_{L^p(\mathbb{R}^N)}^p\nonumber\\
&\geq \frac{\tau s\mathscr{M}(1)}{\lambda_1 N}\|u\|^{\theta N/s}-C(N,s,\alpha)\|u\|^q-\lambda
|h|_\infty S_p^p\|u\|^p,
\end{align*}
where $S_p$ is the best constant from embedding $W_{V}^{s,N/s}(\mathbb{R}^N)$ to $L^p(\mathbb{R}^N)$.
Since $1<\theta N/s<p,q$, we can choose $\rho\in (0,1)$ such that $\frac{\tau s\mathscr{M}(1)}{\lambda_1 N}\rho^{\theta N/s}-C(N,s,\alpha)\rho^{q}-\lambda
|h|_\infty S_p^p\rho^p>0$. Thus, $I_\lambda(u)\geq \kappa:=
\frac{\tau s\mathscr{M}(1)}{\lambda_1 N}\rho^{\theta N/s}-C(N,s,\alpha)\rho^{q}-\lambda
|h|_\infty S_p^p\rho^p>0$ for all $u\in W_V^{s,N/s}(\mathbb{R}^N)$, with $\|u\|=\rho$.
\end{proof}

\begin{lemma}[Mountain Pass Geometry 2]\label{lem3.3} Assume that $(f_1)$--$(f_2)$ hold.
Then there exists a nonnegative function $e\in C_0^\infty(\mathbb{R}^N)$, independent of $\lambda$, such that $I_\lambda(e)<0$ and $\|e\|\geq\rho_\lambda$ for all $\lambda\in\mathbb{R}^+$.
\end{lemma}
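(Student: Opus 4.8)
The plan is to exhibit an explicit test function, show the energy functional $I_\lambda$ goes to $-\infty$ along a ray generated by it, and then pick a point far enough out on that ray. The key mechanism is the superlinearity encoded in $(f_2)$: the Ambrosetti--Rabinowitz condition $0<\mu F(x,t)\le f(x,t)t$ with $\mu>\theta N/s$ forces $F$ to grow at least like $t^\mu$ from below, which beats the polynomial growth $\mathscr{M}(\|u\|^{N/s})\sim\|u\|^{\theta N/s}$ of the Kirchhoff term (using $(M_2)$, which gives $\mathscr{M}(t)\le \mathscr{M}(1)t^\theta$ for $t\ge 1$, hence $\mathscr{M}(\|u\|^{N/s})\le \mathscr{M}(1)\|u\|^{\theta N/s}$ for $\|u\|\ge 1$).

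First I would fix any nonnegative $w\in C_0^\infty(\mathbb{R}^N)$ with $w\not\equiv 0$, say with $\|w\|=1$, and study $I_\lambda(tw)$ for $t>0$ large. Integrating the differential inequality in $(f_2)$, there exist constants $C_1>0$ and $C_2\in\mathbb{R}$ such that $F(x,\xi)\ge C_1\xi^\mu - C_2$ for all $\xi\ge 0$ and $x$ in the (compact) support of $w$; more precisely one gets $F(x,\xi)\ge F(x,\xi_0)\xi_0^{-\mu}\xi^\mu$ for $\xi\ge\xi_0$, with $F(x,\xi_0)>0$ by $(f_2)$, uniformly on $\mathrm{supp}\,w$ by continuity and compactness. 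Then, for $t\ge 1$,
\begin{align*}
I_\lambda(tw)&=\frac{s}{N}\mathscr{M}(t^{N/s})-\int_{\mathbb{R}^N}F(x,tw)\,dx-\lambda\int_{\mathbb{R}^N}h|tw|^p\,dx\\
&\le \frac{s\mathscr{M}(1)}{N}t^{\theta N/s}-C_1 t^{\mu}\int_{\mathrm{supp}\,w}w^\mu\,dx+C_3-\lambda t^p\int_{\mathbb{R}^N}h w^p\,dx,
\end{align*}
where $C_3$ absorbs the additive constant over the fixed compact support. Since $\mu>\theta N/s$ and the coefficient of $t^\mu$ is a fixed negative constant, while the $\lambda$-term is $\le 0$ because $h\ge 0$, the right-hand side tends to $-\infty$ as $t\to\infty$, independently of $\lambda$. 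Hence there is $t_0>1$, depending only on $w$ (not on $\lambda$), with $I_\lambda(t_0 w)<0$.

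To finish, I would set $e:=t_0 w$. Then $e\in C_0^\infty(\mathbb{R}^N)$ is nonnegative, $I_\lambda(e)<0$ for every $\lambda>0$, and $e$ is independent of $\lambda$. The condition $\|e\|\ge\rho_\lambda$ need not hold for the original $t_0$, but one simply enlarges $t_0$ if necessary: increasing $t$ only makes $I_\lambda(tw)$ more negative (past the point where the $t^\mu$ term dominates), so after possibly replacing $t_0$ by $\max\{t_0,\rho_\lambda/\|w\|+1\}$ one secures both $I_\lambda(e)<0$ and $\|e\|=t_0\|w\|>\rho_\lambda$. A mild subtlety is that $\rho_\lambda$ from Lemma~\ref{lem3.2} may itself depend on $\lambda$ and could a priori be unbounded in $\lambda$; however, what the mountain pass theorem actually requires is, for each fixed $\lambda$, the existence of some $e$ with $\|e\|>\rho_\lambda$ and $I_\lambda(e)<0$, and that is exactly what the rescaling delivers. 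If one insists on a single $\lambda$-independent $e$, one notes that $\rho_\lambda$ can be taken nonincreasing in $\lambda$ only for $\lambda$ in a bounded range, or—more cleanly—the statement is invoked only for the relevant $\lambda$, so no genuine obstacle arises. The main (and only real) point of care is the passage from $(f_2)$ to the lower bound $F(x,\xi)\gtrsim \xi^\mu$ uniformly on the compact support of $w$; everything else is elementary comparison of powers of $t$.
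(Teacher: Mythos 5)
Your proof is correct and follows essentially the same route as the paper: integrate the Ambrosetti--Rabinowitz condition $(f_2)$ to obtain $F(x,\xi)\ge C_1\xi^\mu-C_2$ on the compact support, use $(M_2)$ to bound $\mathscr{M}(t^{N/s})\lesssim t^{\theta N/s}$, and compare powers of $t$ to get $I_\lambda(tw)\to-\infty$. Your worry about $\rho_\lambda$ being unbounded in $\lambda$ is in fact moot, since the $\rho_\lambda$ produced in Lemma~\ref{lem3.2} is chosen in $(0,1)$, so any fixed $e$ with $\|e\|\ge 1$ serves uniformly in $\lambda$, exactly as the paper asserts.
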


\begin{proof}
It follows from  $(M_2)$ that
\begin{align}\label{M2}
\mathscr{M}(t)\leq \mathscr{M}(1)t^\theta\quad\mbox{for  all } t\geq 1.
\end{align}
Furthermore, $F(x,t)\geq0$ for all $(x,t)\in\mathbb{R}^N\times \mathbb{R}$
by $(f_1)$ and $(f_2)$.
Let $u\in W^{s,N/s}_V(\mathbb{R}^N)\setminus\{0\},$ $u\geq 0$ with compact support $\Omega={\rm supp} (u)$ and $\|u\|=1$. By $(f_2)$, we obtain that for $\mu>\theta N/s$, there exist positive constants $C_1,C_2>0$ such that
\begin{align}\label{eq3.6}
F(x,t)\geq C_1 t^{\mu}-C_2\ \ {\rm for\ all}\ x\in\Omega\ {\rm and}\ t\geq0.
\end{align}
Then for all $t\geq 1$, we have
\begin{align*}
 I_\lambda(tu)&\leq \frac{s}{N}\mathscr{M}(1)t^{\theta N/s}\|u\|^{\theta N/s}
-C_1t^{\mu}\int_{\Omega}|u|^\mu dx+C_2|\Omega|.
\end{align*}
Hence, $ I_\lambda(tu)\rightarrow-\infty$ as $t\rightarrow\infty$, since $\theta N/s<\mu$. The lemma is now proved by taking $e=Tu$, with $T>0$ so large that
$\|e\|\geq \rho_\lambda$ and $I_\lambda(e)<0$.
\end{proof}

By Theorem \ref{MPth}, there exists a $(PS)_c$ sequence $\{u_n\}_n\subset W_V^{s,N/s}(\mathbb{R}^N))$ such that
\begin{align*}
I_\lambda(u_n)\rightarrow c_\lambda \ \ {\rm and}\ I_\lambda^\prime(u_n)\rightarrow 0\ \ {\rm as}\ n\rightarrow\infty,
\end{align*}
where
\begin{align}\label{eq3.77}
c_{\lambda}=\inf_{\gamma\in\Gamma}\max_{t\in[0,1]}I_\lambda(\gamma(t)),
\end{align}
and $\Gamma=\left\{\gamma\in C([0,1];W_V^{s,N/s}(\mathbb{R}^N)):\gamma(0)=0,\
 I_\lambda(\gamma(1))=e\right\}.$
Obviously, $c_\lambda>0$ by Lemma \ref{lem3.1}. Moreover, we have the following result.

\begin{lemma}\label{lem3.4}
Suppose that $V$ satisfies $(V_1)$--$(V_2)$ and $f$ satisfies $(f_1)$--$(f_2)$. Then
$$\lim_{\lambda\rightarrow\infty}c_{\lambda}=0,$$
where $c_{\lambda}$ is given by \eqref{eq3.77}.
\end{lemma}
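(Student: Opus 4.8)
The plan is to show that the mountain pass level $c_\lambda$ can be made arbitrarily small by evaluating the energy functional along a fixed path and using the strong decay of $\lambda\int h|u^+|^p\,dx$ as $\lambda\to\infty$. First I would fix the nonnegative function $e\in C_0^\infty(\mathbb{R}^N)$ from Lemma~\ref{lem3.3}, which is independent of $\lambda$ and satisfies $I_\lambda(e)<0$. The path $\gamma_0(t)=te$, $t\in[0,1]$, belongs to the admissible class $\Gamma$ for every $\lambda$, so by \eqref{eq3.77} we have $0<c_\lambda\le\max_{t\in[0,1]}I_\lambda(te)$. Hence it suffices to estimate this one-dimensional maximum.

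Next I would analyze $g_\lambda(t):=I_\lambda(te)=\frac{s}{N}\mathscr M(t^{N/s}\|e\|^{N/s})-\int_{\mathbb{R}^N}F(x,te)\,dx-\lambda t^p\int_{\mathbb{R}^N}h|e^+|^p\,dx$. Since $F\ge0$ by $(f_1)$--$(f_2)$ and $\mathscr M$ is nondecreasing, we get the $\lambda$-free upper bound $g_\lambda(t)\le\frac{s}{N}\mathscr M(t^{N/s}\|e\|^{N/s})-\lambda t^p\int h|e^+|^p\,dx=:\varphi_\lambda(t)$. The key observation is that $\varphi_\lambda$ is uniformly bounded on $[0,1]$ independently of $\lambda$ (since $\mathscr M$ is continuous and the $\lambda$-term is negative), so the maximum of $g_\lambda$ over $[0,1]$ is attained at some $t_\lambda$ in $[0,1]$. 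I would argue that $t_\lambda\to0$ as $\lambda\to\infty$: indeed, using $(f_3)$ and the continuity estimates near zero (as in Lemma~\ref{lem3.2}), for $t$ bounded away from $0$ the positive term $\frac{s}{N}\mathscr M(t^{N/s}\|e\|^{N/s})$ stays bounded while $\lambda t^p\int h|e^+|^p\,dx\to\infty$, forcing $g_\lambda(t)<0$ there for $\lambda$ large; since $c_\lambda>0$, the maximizer $t_\lambda$ must lie in a shrinking neighborhood of $0$. More precisely, one shows $\int h|e^+|^p\,dx>0$ (otherwise $h|e^+|^p\equiv0$ and the $\lambda$-term vanishes, but then one still uses that $e$ has nontrivial support where $h$ could be positive — this needs care, see below), so that $g_\lambda(t_\lambda)\le\frac{s}{N}\mathscr M(t_\lambda^{N/s}\|e\|^{N/s})$, and $t_\lambda\to0$ gives $c_\lambda\le g_\lambda(t_\lambda)\to\frac{s}{N}\mathscr M(0)=0$ by continuity of $\mathscr M$ with $\mathscr M(0)=0$.

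The main obstacle I anticipate is the degenerate case where $\int_{\mathbb{R}^N}h|e^+|^p\,dx=0$, which can happen if the support of $e$ avoids the support of $h$; in that event the $\lambda$-dependence disappears from $g_\lambda$ entirely and the above argument collapses. To handle this, I would not fix $e$ blindly but instead revisit Lemma~\ref{lem3.3}: since $h\ge0$ is only assumed measurable and nontrivial (one should check whether $h\not\equiv0$ is part of the hypotheses; if $h\equiv0$ the statement is about the $\lambda$-independent functional and must be treated separately, but then $c_\lambda$ is constant in $\lambda$ and the claim would require $c_\lambda=0$, which is false — so implicitly $h\not\equiv0$ on a set where we can place mass), choose a scaled translate $e_R$ of a fixed bump function whose support meets $\{h>0\}$ in positive measure, still achieving $I_\lambda(e_R)<0$ uniformly. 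Alternatively, and more robustly, one avoids the path $te$ and instead uses a sequence $v_\lambda$ with $\|v_\lambda\|$ small and support in a fixed ball where $h$ is positive, testing directly that the mountain-pass level is dominated by $\sup_{t\ge0}\big(\frac{s}{N}\mathscr M(t^{N/s}\|v_\lambda\|^{N/s})-\lambda C t^p\big)$, which scales like $\lambda^{-\theta N/(sp-\theta N)}$ up to constants and tends to $0$. Carrying out this scaling computation carefully — tracking the interplay between the exponent $\theta N/s$ coming from $(M_2)$ and the exponent $p>\theta N/s$ — is the technical heart of the proof, but it is a routine optimization once the right test function is chosen.
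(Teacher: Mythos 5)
Your proof takes essentially the same route as the paper: fix the path $\gamma_0(t)=te$ with $e$ from Lemma~\ref{lem3.3}, show that the one--dimensional maximizer $t_\lambda$ tends to $0$, and conclude $c_\lambda\le\frac{s}{N}\mathscr M(\|t_\lambda e\|^{N/s})\to 0$ by continuity of $\mathscr M$ at $0$ together with $\mathscr M(0)=0$. Your route to $t_\lambda\to0$ (for every $\delta>0$, $g_\lambda(t)<0$ on $[\delta,1]$ once $\lambda$ is large, while $g_\lambda(t_\lambda)\ge c_\lambda>0$) is a bit more direct than the paper's two-step argument (first boundedness of $\{t_\lambda\}$ by contradiction, then a terse assertion that $t_\lambda\to0$), but it is the same computation in substance. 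More importantly, you correctly flag a hypothesis the paper uses silently: the contradiction step, whether in your version or in the paper's inequality $\theta\mathscr M(1)t_\lambda^{N\theta/s}\|e\|^{N\theta/s}\ge\lambda t_\lambda^p\int_{\mathbb R^N}he^p\,dx$, is vacuous unless $\int_{\mathbb R^N}h\,e^p\,dx>0$. This can indeed fail if $\operatorname{supp}e$ misses $\{h>0\}$, and fails outright if $h\equiv 0$ (in which case $c_\lambda$ is a positive constant, so $h\not\equiv 0$ must be read as implicit in Theorem~\ref{th1}). Your proposed fix---when invoking Lemma~\ref{lem3.3}, choose the starting function $u$ nonnegative, compactly supported, normalized, and with support intersecting $\{h>0\}$ in positive measure, which the proof of Lemma~\ref{lem3.3} permits---is exactly what is needed, and with this choice made once and for all your argument is complete and in fact more careful than the original.
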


\begin{proof}
For $e$ given by Lemma~\ref{lem3.3}, we have $\lim_{t\rightarrow\infty} I_\lambda(te)=-\infty$. Therefore, there exists $t_{\lambda}>0$ such that $ I_\lambda(t_{\lambda}e)=\max_{t\geq0}I_\lambda(te)$. Hence, by $ I_\lambda^\prime(t_\lambda e)=0$, we have
\begin{align}\label{eq3.7}
t_{\lambda}^{N/s}M(\|t_\lambda e\|^{N/s})\|e\|^{N/s}=\int_{\mathbb{R}^N}f(x,t_{\lambda}e)t_\lambda edx+\lambda t_{\lambda}^{p}\int_{\mathbb{R}^N}
 he^{p}dx.
\end{align}

Let us first claim that $\{t_\lambda\}_\lambda$ is bounded. Arguing by contradiction, we assume that there exists a subsequence of $\{t_\lambda\}_\lambda$ still denoted by $\{t_\lambda\}_\lambda$ such that $t_\lambda\rightarrow \infty$ as $\lambda\rightarrow\infty$. Then for $\lambda$ large enough, by $(M_2)$ and \eqref{M2} we get
\begin{align*}
\theta \mathscr{M}(1)t_\lambda^{N\theta/s}\|e\|^{N\theta/s}\geq
\lambda t_{\lambda}^{p}\int_{\mathbb{R}^N}
 he^{p}dx,
\end{align*}
thanks to $(f_2)$. It follows from $p>N\theta/s$ that $t_\lambda\rightarrow 0$ as $\lambda\rightarrow\infty$ which is a contradiction. Hence $\{t_\lambda\}_\lambda$ is bounded.

Therefore, up to a subsequence, one can prove that $t_\lambda\rightarrow 0$ as $\lambda\rightarrow\infty$.
Put $\overline{\gamma}(t)=te$. Clearly, $\overline{\gamma}\in\Gamma$, thus by the continuity of $\mathscr{M}$, we have
\begin{align*}
0<c_{\lambda}\leq\max_{t\geq0} I_\lambda(\overline{\gamma}(t))
= I_\lambda(t_{\lambda}e)&\leq \frac{1}{p}\mathscr{M}(\|t_{\lambda}e\|^{N/s})
\rightarrow 0
\end{align*}
as $\lambda\rightarrow\infty$.
The lemma is now proved.
\end{proof}

\begin{lemma}\label{lem3.5}
Let $\{u_n\}_n\subset W_V^{s,N/s}(\mathbb{R}^N)$ be a $(PS)_{c_\lambda}$ sequence associated with $I_\lambda$. Then there exists $\Lambda_1>0$ such that for all $\lambda>\Lambda_1$, up to a subsequence still denoted by $\{u_n\}_n$,
\begin{align*}
\limsup_{n\rightarrow\infty}\|u_n\|<\left(\frac{\alpha_{N,s}}{\alpha_0}\right)^{\frac{N-s}{N}}.
\end{align*}
\end{lemma}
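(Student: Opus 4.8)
The plan is to combine the mountain-pass energy bound $I_\lambda(u_n)\to c_\lambda$ with the Palais--Smale condition $I_\lambda'(u_n)\to 0$ and the Ambrosetti--Rabinowitz type condition $(f_2)$ to first show that $\{u_n\}_n$ is bounded in $W_V^{s,N/s}(\mathbb R^N)$, then estimate its limiting norm in terms of $c_\lambda$, and finally invoke Lemma~\ref{lem3.4} to force the bound below the Trudinger--Moser threshold $\bigl(\alpha_{N,s}/\alpha_0\bigr)^{(N-s)/N}$ once $\lambda$ is large.

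First I would test $I_\lambda'(u_n)$ against $u_n$ and form the combination $I_\lambda(u_n)-\tfrac1\mu\langle I_\lambda'(u_n),u_n\rangle$. Using $(M_2)$ to bound $\tfrac sN\mathscr M(\|u_n\|^{N/s})-\tfrac1\mu M(\|u_n\|^{N/s})\|u_n\|^{N/s}$ from below by $\bigl(\tfrac sN-\tfrac{\theta s}{\mu N}\bigr)\mathscr M(\|u_n\|^{N/s})$, using $(f_2)$ to absorb the terms with $f$ and $F$ (they contribute something nonnegative since $\mu F(x,u)\le f(x,u)u$), and using $p<\mu$ together with $h\ge 0$ so that the remaining $h$-term has a favourable sign, one obtains
\begin{align*}
c_\lambda+o(1)+o(1)\|u_n\|\;\geq\;\Bigl(\frac{1}{\theta}-\frac{N}{\mu s}\Bigr)\mathscr M\bigl(\|u_n\|^{N/s}\bigr).
\end{align*}
Combined with $(M_1)$ and \eqref{M1}--\eqref{M2} (which give $\mathscr M(t)\geq\mathscr M(1)\min\{t,t^\theta\}$ for all $t\ge0$), this yields that $\{\|u_n\|\}_n$ is bounded, and moreover produces the quantitative bound $\limsup_n\mathscr M(\|u_n\|^{N/s})\leq C\,c_\lambda$ for an explicit constant $C=C(N,s,\theta,\mu)>0$. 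Since $\mathscr M(t)\geq\mathscr M(1)t^\theta$ on $[0,1]$ and $\mathscr M(t)\geq\mathscr M(1)t$ for $t\geq1$, one can invert this to get $\limsup_n\|u_n\|^{N/s}\leq\varphi(c_\lambda)$ where $\varphi(r)\to0$ as $r\to0^+$.

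Then, by Lemma~\ref{lem3.4}, $c_\lambda\to0$ as $\lambda\to\infty$, so $\varphi(c_\lambda)\to0$. Hence there exists $\Lambda_1>0$ such that for all $\lambda>\Lambda_1$ one has $\varphi(c_\lambda)<\bigl(\alpha_{N,s}/\alpha_0\bigr)^{\frac{(N-s)}{s}\cdot\frac ns}$, i.e. $\limsup_n\|u_n\|<\bigl(\alpha_{N,s}/\alpha_0\bigr)^{(N-s)/N}$, passing to a subsequence along which the $\limsup$ is attained. The main obstacle I anticipate is handling the degenerate Kirchhoff coefficient carefully: because $M(0)=0$ is allowed, the coercivity of $u\mapsto\tfrac sN\mathscr M(\|u\|^{N/s})$ is only of the form $\mathscr M(1)\|u\|^{N/s}$ for large norms but $\mathscr M(1)\|u\|^{N\theta/s}$ for small norms, so one must keep track of which regime $\|u_n\|$ lies in and make sure the final threshold inequality is stated in terms of the correct power; this is exactly what makes the bound depend on $\lambda$ only through $c_\lambda$, and why $(M_1)$ is needed to guarantee that $M(\|u_n\|^{N/s})$ does not degenerate along the sequence when $\|u_n\|$ stays bounded away from $0$.
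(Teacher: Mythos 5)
Your overall plan coincides with the paper's: form $I_\lambda(u_n)-\tfrac1\mu\langle I_\lambda'(u_n),u_n\rangle$, use $(M_2)$ and $(f_2)$ to bound this from below by a positive multiple of the Kirchhoff term, deduce boundedness, estimate the limiting norm in terms of $c_\lambda$, and invoke Lemma~\ref{lem3.4}. Your variant of keeping the lower bound in terms of $\mathscr M(\|u_n\|^{N/s})$ rather than $M(\|u_n\|^{N/s})\|u_n\|^{N/s}$ is in fact a small improvement: once $c_\lambda$ is small enough that $\limsup_n\mathscr M(\|u_n\|^{N/s})<\mathscr M(1)$, the strict monotonicity of $\mathscr M$ forces $\|u_n\|^{N/s}<1$ and then $\mathscr M(t)\geq\mathscr M(1)t^\theta$ on $[0,1]$ inverts cleanly, so you never need the $\inf_n\|u_n\|=0$ subcase that the paper treats separately via $(M_1)$.

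However there is a genuine gap, and the direction of one of your inequalities is reversed. You claim that ``$p<\mu$ together with $h\geq0$'' makes the $h$-term in $I_\lambda(u_n)-\tfrac1\mu\langle I_\lambda'(u_n),u_n\rangle$ favourable. That term equals $\lambda\bigl(\tfrac1\mu-\tfrac1p\bigr)\int_{\mathbb R^N}h|u_n^+|^p\,dx$, which is nonnegative exactly when $\mu\leq p$, not when $p<\mu$. Theorem~\ref{th1} only assumes $p>N\theta/s$ and $\mu>N\theta/s$, so both orderings are possible and must be handled. The paper does this by testing against $\tfrac1\mu$ when $\mu\leq p$ and against $\tfrac1p$ when $\mu>p$; in the latter case the $h$-term cancels exactly and $(f_2)$ gives $\tfrac1p f(x,u)u-F(x,u)\geq(\tfrac\mu p-1)F(x,u)\geq0$. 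Your argument as written only covers (after correcting the sign) the case $\mu\leq p$. Separately, your claim $\mathscr M(t)\geq\mathscr M(1)\min\{t,t^\theta\}$ for all $t\geq0$ does not follow from $(M_1)$--$(M_2)$ on $[1,\infty)$ --- only $\mathscr M(t)\geq\mathscr M(1)t^\theta$ on $[0,1]$ and $\mathscr M(t)\leq\mathscr M(1)t^\theta$ on $[1,\infty)$ are available from $(M_2)$, and what you actually need for boundedness is $\mathscr M(t)\geq\mathscr M(1)+\kappa(1)(t-1)$ for $t\geq1$, which comes from $(M_1)$ --- but this is a minor overstatement that does not affect the final conclusion since only the regime $\|u_n\|^{N/s}\leq1$ is relevant once $c_\lambda$ is small.
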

\begin{proof}
We first claim that $\{u_n\}_n$ is bounded in $W_V^{s,N/s}(\mathbb{R}^N)$. Indeed, this follows from the fact that $\{u_n\}_n$ is a $(PS)_c$ sequence such that
\begin{align*}
I_\lambda(u_n)-\frac{1}{\mu}\langle I_\lambda^\prime(u_n),u_n\rangle\leq c+o(1)+o(1)\|u_n\|.
\end{align*}
On the other hand, if $N\theta/s<\mu\leq p$, by $(f_2)$ we have
\begin{align*}
I_\lambda(u_n)-\frac{1}{\mu}\langle I_\lambda^\prime(u_n),u_n\rangle&=\left(\frac{s}{N\theta}-\frac{1}{\mu}\right)M(\|u_n\|^{N/s})\|u_n\|^{N/s}
\\&\ \ +\int_{\mathbb{R}^N}\left(\frac{1}{\mu}f(x,u_n)u_n-F(x,u_n)\right)dx
+\left(\frac{1}{\mu}-\frac{1}{p}\right)\lambda\int_{\mathbb{R}^N}h(x)|u_n|^pdx\\
&\geq \left(\frac{s}{N\theta}-\frac{1}{\mu}\right)M(\|u_n\|^{N/s})\|u_n\|^{N/s}.
\end{align*}
Then
\begin{align}\label{eq3.9}
\left(\frac{s}{N\theta}-\frac{1}{\mu}\right)M(\|u_n\|^{N/s})\|u_n\|^{N/s}\leq c_\lambda+o(1)+o(1)\|u_n\|
\end{align}
which means that $\{u_n\}_n$ is bounded in $W_V^{s,N/s}(\mathbb{R}^N)$.
Similarly, if $\mu>p$, we obtain
\begin{align*}
I_\lambda(u_n)-\frac{1}{p}\langle I_\lambda^\prime(u_n),u_n\rangle&=\left(\frac{s}{N\theta}-\frac{1}{\mu}\right)M(\|u_n\|^{N/s})\|u_n\|^{N/s}
\\&\ \ +\int_{\mathbb{R}^N}\left(\frac{1}{p}f(x,u_n)u_n-F(x,u_n)\right)dx\\
&\geq \left(\frac{s}{N\theta}-\frac{1}{p}\right)M(\|u_n\|^{N/s})\|u_n\|^{N/s}.
\end{align*}
Then
\begin{align}\label{eq3.10}
\left(\frac{s}{N\theta}-\frac{1}{p}\right)M(\|u_n\|^{N/s})\|u_n\|^{N/s}\leq c_\lambda+o(1)+o(1)\|u_n\|.
\end{align}

 If $d:=\inf_{n\geq1}\|u_n\|>0$, then by $(M_1)$, \eqref{eq3.9} and \eqref{eq3.10}  we get
\begin{align*}
\limsup_{n\rightarrow\infty}
\|u_n\|\leq \max\left\{\left(\frac{\mu N\theta\kappa(d)}{\mu s-N\theta}
\right)^{\frac{s}{
N}}, \left(\frac{p N\theta\kappa(d)}{ps-N\theta}
\right)^{\frac{s}{
N}}\right\}c_\lambda^{\frac{s}{
N}}.
\end{align*}
Hence by Lemma \ref{lem3.4}, there exists $\Lambda_1>0$ such that for all $\lambda>\Lambda_1$
\begin{align*}
\limsup_{n\rightarrow\infty}\|u_n\|<\left(\frac{\alpha_{N,s}}{\alpha_0}
\right)^{\frac{N-s}{N}}.
\end{align*}
If $d:=\inf_{n\geq1}\|u_n\|=0$, we can take a subsequence of $\{u_n\}_n$ such that the result holds. Thus, the proof is complete.
\end{proof}
\begin{lemma}[The $(PS)_{c_\lambda}$ condition]\label{lem3.6}
Let $(V_1)$--$(V_2)$ and $(f_1)$--$(f_2)$ hold. Then the functional $I_\lambda$ satisfies the $(PS)_{c_\lambda}$ condition for all $\lambda>\Lambda_1.$
\end{lemma}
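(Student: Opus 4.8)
The plan is to run the standard local Palais--Smale argument for Trudinger--Moser type problems: use the sub-threshold bound of Lemma~\ref{lem3.5} to control the exponential nonlinearity, pass to a weak limit, show that the nonlinear contributions in $\langle I_\lambda^\prime(u_n),u_n-u\rangle$ vanish, and then conclude by the $(S_+)$ property of the nonlocal $N/s$--Laplacian part of the operator.

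First I would take a $(PS)_{c_\lambda}$ sequence $\{u_n\}_n$ with $\lambda>\Lambda_1$. Its boundedness in $W_V^{s,N/s}(\mathbb{R}^N)$ is already contained in the proof of Lemma~\ref{lem3.5}, so up to a subsequence $u_n\rightharpoonup u$ in $W_V^{s,N/s}(\mathbb{R}^N)$, and by the compact embedding Theorem~\ref{th2.1} also $u_n\to u$ strongly in $L^\nu(\mathbb{R}^N)$ for every $\nu\in[N/s,\infty)$ and $u_n\to u$ a.e. in $\mathbb{R}^N$. Passing to a further subsequence, $\|u_n\|^{N/s}\to t_0$ for some $t_0\geq 0$; I would rule out $t_0=0$, since then $u_n\to 0$ in $W_V^{s,N/s}(\mathbb{R}^N)$ and continuity of $I_\lambda$ would give $c_\lambda=I_\lambda(0)=0$, contradicting $c_\lambda>0$. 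Hence $t_0>0$, and $(M_1)$ furnishes $\kappa_0:=\kappa(t_0/2)>0$ with $M(\|u_n\|^{N/s})\geq\kappa_0$ for all large $n$.

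The next step is to show that $\int_{\mathbb{R}^N}f(x,u_n)(u_n-u)\,dx\to 0$ and $\int_{\mathbb{R}^N}h|u_n^+|^{p-2}u_n^+(u_n-u)\,dx\to 0$. Splitting $f$ via $(f_1)$ (using $u_n^+\leq|u_n|$), the polynomial part contributes at most $C\|u_n\|_{L^{\theta N/s}}^{\theta N/s-1}\|u_n-u\|_{L^{\theta N/s}}$, which tends to $0$ by boundedness and strong $L^{\theta N/s}$ convergence. For the exponential part, Lemma~\ref{lem3.5} provides $K$ with $\|u_n\|\leq K<(\alpha_{N,s}/\alpha_0)^{(N-s)/N}$ for all large $n$, so Lemma~\ref{lem2.2} applied with $u=u_n$ and $\varphi=u_n-u$ gives $\int_{\mathbb{R}^N}\Phi_{\alpha_0}(u_n)|u_n-u|\,dx\leq C(N,s,\alpha_0,K)\|u_n-u\|_{L^\nu}\to 0$, since $\nu>N/s$ and the embedding is compact. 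The term involving $h$ is handled by $0\leq h\in L^\infty(\mathbb{R}^N)$, H\"{o}lder's inequality and strong $L^p$ convergence.

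Finally, writing $\langle A(u_n),v\rangle:=\langle u_n,v\rangle_{s,N/s}+\int_{\mathbb{R}^N}V|u_n|^{N/s-2}u_n v\,dx$, from $I_\lambda^\prime(u_n)\to 0$, boundedness of $\{u_n-u\}$ and the previous step I would obtain $M(\|u_n\|^{N/s})\langle A(u_n),u_n-u\rangle\to 0$, hence $\langle A(u_n),u_n-u\rangle\to 0$ after dividing by $M(\|u_n\|^{N/s})\geq\kappa_0$. Since $\langle A(u),u_n-u\rangle\to 0$ by weak convergence, this yields $\langle A(u_n)-A(u),u_n-u\rangle\to 0$, and the uniform convexity of $W_V^{s,N/s}(\mathbb{R}^N)$ together with the $(S_+)$ property of $A$ forces $u_n\to u$ strongly in $W_V^{s,N/s}(\mathbb{R}^N)$, which is exactly the $(PS)_{c_\lambda}$ condition. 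The main obstacle is the exponential estimate in the previous paragraph: without the sub-threshold bound of Lemma~\ref{lem3.5} --- which itself rests on $c_\lambda\to 0$ (Lemma~\ref{lem3.4}) and the restriction $\lambda>\Lambda_1$ --- the family $\{\Phi_{\alpha_0}(u_n)\}_n$ need not be equi-integrable and Lemma~\ref{lem2.2} is inapplicable; once that estimate is in hand, the rest is routine.
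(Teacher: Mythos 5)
Your argument is correct and follows essentially the same route as the paper's proof: reduce to the case where $\|u_n\|$ stays bounded away from zero (so that $(M_1)$ gives a uniform positive lower bound on the Kirchhoff coefficient), kill the $f$- and $h$-terms in $\langle I_\lambda'(u_n),u_n-u\rangle$ via the compact embedding of Theorem~\ref{th2.1} together with Lemma~\ref{lem2.2} under the sub-threshold bound of Lemma~\ref{lem3.5}, and then conclude strong convergence from the monotonicity/$(S_+)$ structure of the principal part (the paper invokes the explicit elementary inequality $(|\xi|^{N/s-2}\xi-|\eta|^{N/s-2}\eta)\cdot(\xi-\eta)\geq C_{s,N}|\xi-\eta|^{N/s}$, valid since $N/s\geq 2$, which is exactly what underlies the $(S_+)$ property you cite). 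The only cosmetic difference is your treatment of the case $\|u_n\|\to 0$: you rule it out by the contradiction $c_\lambda=0$, whereas the paper simply notes that in that case $u_n\to 0$ strongly and the $(PS)$ condition holds trivially; both observations are correct and interchangeable.
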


\begin{proof}
Let $\{u_n\}_n$ be a $(PS)_{c_\lambda}$ sequence. Then by Lemma \ref{lem3.5}, passing to a subsequence, if necessary, we obtain $\|u_n\|\rightarrow \eta\geq0$. If $\eta=0$, then the proof is complete. Thus, in the sequel we can assume that $\eta>0$. Then for $n$ large, $\|u_n\|\geq \frac{1}{2}\eta>0$.

Next, we show that $\{u_n\}_n$ has a convergent subsequence in $W_{V}^{s,N/s}(\mathbb{R}^N)$. By Lemma \ref{lem3.5} and Theorem \ref{th2.1}, passing if necessary to a subsequence, we can assume that
\begin{align}
&u_n\rightharpoonup u\ \ {\rm weakly\ in}\ W_{V}^{s,N/s}(\mathbb{R}^N),\nonumber\\
& u_n\rightarrow u\ \ {\rm strongly\ in}\ L^{\nu}(\mathbb{R}^N)( \nu\geq N/s),\nonumber\\
&u_n\rightarrow u\ \ {\rm a.e.\ in}\ \mathbb{R}^N.
\end{align}
Since $\{u_n\}_n$ is a bounded $(PS)_c$ sequence in $W_{V}^{s,N/s}(\mathbb{R}^N)$, we have
\begin{align}\label{eq3.11}
o(1)&=\langle I_\lambda^\prime(u_n),u_n-u\rangle
\nonumber\\ &=M(\|u_n\|^{N/s})\left(\langle u_n,u_n-u\rangle_{s,N/s}
+\int_{\mathbb{R}^N}V|u_n|^{\frac{N}{s}-2}u_n(u_n-u)dx\right)
 \nonumber \\&-\int_{\mathbb{R}^N}\left[f(x,u_n)(u_n-u)+\lambda h(x)|u_n^+|^{p-2}u_n^+(u_n-u)\right]dx.
\end{align}
Define a functional $L$ as follows
\begin{align*}
\langle L(v),w\rangle=\langle v,w\rangle_{s,N/s}+\int_{\mathbb{R}^N}V(x)|v|^{\frac{N}{s}-2}vwdx
\end{align*}
for all $v,w\in W_V^{s,N/s}(\mathbb{R}^N)$. By the H\"{o}lder inequality, one can see that
\begin{align*}
|\langle L(v),w\rangle|\leq \|v\|^{N/s-1}\|w\|,
\end{align*}
which together with the definition of $L$ implies that
for each $v$, $L(v)$ is a bounded linear functional on $W_V^{s,N/s}(\mathbb{R}^N)$.
Thus, $\langle L(u),u_n-u\rangle=o(1)$, that is,
\begin{align*}
\langle u,u_n-u\rangle_{s,N/s}
+\int_{\mathbb{R}^N}V(x)|u|^{N/s-2}u(u_n-u)dx=o(1).
\end{align*}
Similarly, one can deduce that
\begin{align*}
\lim_{n\rightarrow\infty}\int_{\mathbb{R}^N}h(x)|u_n^+|^{p-2}u_n^+(u_n-u)dx=0.
\end{align*}
Using assumptions $(f_1)$ and $(f_2)$, we have
\begin{align*}
\left|\int_{\mathbb{R}^N}f(x,u_n)(u_n-u)dx\right|\leq b_1\int_{\mathbb{R}^N}|u_n|^{N\theta /s-1}|u_n-u|dx
+b_2\int_{\mathbb{R}^N}\Phi_\alpha(u_n)|u_n-u|dx.
\end{align*}
Further, by the Holder inequality, we get
\begin{align*}
\int_{\mathbb{R}^N}|u_n|^{N\theta /s-1}|u_n-u|dx
&\leq \|u_n\|_{L^{N\theta  /s}(\mathbb{R}^n)}^{N\theta /s-1}
\|u_n-u\|_{L^{N\theta /s}(\mathbb{R}^N)}\\
&\leq C\|u_n-u\|_{L^{N\theta /s}(\mathbb{R}^N)}\rightarrow 0\ \ {\rm as}\ n\rightarrow\infty.
\end{align*}
On the other hand, by Lemmas \ref{lem2.2} and \ref{lem3.5}, for some $\nu>N\theta /s$ we obtain
\begin{align*}
&\int_{\mathbb{R}^N}\Phi_\alpha(u_n)|u_n-u|dx\\
&\leq C\|u_n-u\|_{L^{\nu}(\mathbb{R}^N)}\rightarrow0\ \ {\rm as}\ n\rightarrow\infty.
\end{align*}

In conclusion, we can deduce from \eqref{eq3.11} that
\begin{align*}
&M(\|u_n\|^{N/s})\bigg[\langle u_n,u_n-u\rangle_{s,N/s}-\langle u,u_n-u\rangle_{s,N/s}\\
&\ \ +\int_{\mathbb{R}^N}V(x)\left(|u_n|^{N/s-2}u_n-|u|^{N/s-2}u\right)(u_n-u)dx
\bigg]=o(1).
\end{align*}
By using the following inequality:
\begin{align*}
(|\xi|^{N/s-2}\xi-|\eta|^{N/s-2}\eta)\cdot(\xi-\eta)
\geq C_{s,N}|\xi-\eta|^{N/s},\ \ N\geq 2>2s,
\end{align*}
we can easily obtain that $\|u_n-u\|\rightarrow 0$ as $n\rightarrow\infty$. Thus, the proof is complete.
\end{proof}

\begin{proof}[\bf Proof of Theorem \ref{th1}]
By Lemmas~\ref{lem3.2} and \ref{lem3.3},  we know that $ I_\lambda$ satisfies all  assumptions of Theorem~\ref{MPth}.
Hence there exists a $(PS)_{c_\lambda}$ sequence. Moreover, by Lemma~\ref{lem3.6}, there exists
a threshold $\lambda^*=\Lambda_1>0$ such that for all $\lambda>\lambda^*$
the functional $I_\lambda$ admits a nontrivial critical point
$u\in W_{V}^{s,N/s}(\mathbb{R}^N)$. The critical point $u_\lambda$ is a mountain pass solution
of equation~\eqref{eq1}. Using a similar discussion as in Lemma \ref{lem3.5}, we can deduce that
$\|u_\lambda\|\rightarrow0$ as $\lambda\rightarrow\infty$.
Furthermore, Lemma \ref{lem3.1} shows that $u$ is nonnegative.
\end{proof}
\section{Proof of Theorem \ref{th2}}\label{sec4}
Throughout this section we always assume that the conditions in Theorem \ref{th2} hold. To prove Theorem \ref{th2}, we first state several basic results.
\begin{lemma}\label{lem4.1} There exist $\Lambda_2>0$ and $\delta_2>0$ such that for $0<\lambda<\Lambda_2$, there exists $\widetilde{\rho}_\lambda>0$ so that
$I_\lambda(u)\geq\delta_2>0$ for any $u\in W_V^{s,N/s}(\mathbb{R}^N)$, with $\|u\|=\widetilde{\rho}_\lambda$. Furthermore, $\widetilde{\rho}_\lambda$ can be chosen such that $\widetilde{\rho}_\lambda\rightarrow0$ as $\lambda\rightarrow0$.
\end{lemma}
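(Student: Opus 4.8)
The plan is to estimate $I_\lambda(u)$ from below for $\|u\|$ small, mimicking the computation in Lemma \ref{lem3.2}, but now keeping careful track of the $\lambda$-dependence because in the regime $1<p<N/s$ the term $\lambda\int h|u^+|^p\,dx$ cannot be absorbed by merely shrinking the radius independently of $\lambda$. First I would recall from $(f_3)$ and $(f_1)$ the pointwise bound \eqref{c6}, namely $F(x,u)\le \frac{s\mathscr M(1)}{N}(\lambda_1-\tau)|u|^{N/s}+C|u|^q\Phi_\alpha(u)$ for a suitable $q>N/s$ and $\alpha<\alpha_{N,s}$, together with $\mathscr M(t)\ge\mathscr M(1)t^\theta$ for $t\in[0,1]$ from \eqref{M1}. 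Combining these with the definition of $\lambda_1$ and the continuous embedding $W_V^{s,N/s}(\mathbb R^N)\hookrightarrow L^{\theta N/s}(\mathbb R^N)$, for $\|u\|\le 1$ I obtain
\begin{align*}
I_\lambda(u)\ge \frac{\tau s\mathscr M(1)}{\lambda_1 N}\|u\|^{\theta N/s}-C(N,s,\alpha)\|u\|^q-\lambda|h|_{L^{N/(N-sp)}}S_p^p\|u\|^p,
\end{align*}
where in the last term I use Hölder's inequality with exponents $\frac{N}{N-sp}$ and $\frac{N}{sp}$ to get $\int h|u^+|^p\,dx\le |h|_{L^{N/(N-sp)}}\|u\|_{L^{N/s}(\mathbb R^N)}^p\le |h|_{L^{N/(N-sp)}}S_p^p\|u\|^p$; this is exactly the integrability hypothesis on $h$ in Theorem \ref{th2}, and it is the reason the exponent $\frac{N}{N-sp}$ appears there.

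Now the key point is the ordering of exponents: since $1<p<N/s\le\theta N/s$ and $q>N/s$, the dominant term for small $\|u\|$ is the \emph{negative} one $-\lambda|h|_{*}S_p^p\|u\|^p$ (because $p$ is the smallest exponent), so unlike in Lemma \ref{lem3.2} I cannot just fix $\rho\in(0,1)$. Instead I would choose $\widetilde\rho_\lambda$ depending on $\lambda$. Set $g(t)=\frac{\tau s\mathscr M(1)}{\lambda_1 N}t^{\theta N/s-p}-C(N,s,\alpha)t^{q-p}-\lambda|h|_*S_p^p$ for $t\in(0,1]$, so that $I_\lambda(u)\ge t^p g(t)$ when $\|u\|=t$. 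Since $\theta N/s-p>0$ and $q-p>0$, the function $t\mapsto \frac{\tau s\mathscr M(1)}{\lambda_1 N}t^{\theta N/s-p}-C(N,s,\alpha)t^{q-p}$ is positive and, say, attains a value $m_0>0$ at some fixed $t_0\in(0,1)$ (e.g. choose $t_0$ small enough that the first term dominates the second and bounds it below by $m_0$). Then for all $\lambda<\Lambda_2:=\frac{m_0}{2|h|_*S_p^p}$ we get $g(t_0)\ge m_0/2>0$, hence $I_\lambda(u)\ge \delta_2:=t_0^p\,m_0/2>0$ whenever $\|u\|=\widetilde\rho_\lambda:=t_0$. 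That already proves the first assertion with a radius independent of $\lambda$; for the refinement I would instead pick $\widetilde\rho_\lambda$ to be the smaller root of $g$, i.e. solve $\frac{\tau s\mathscr M(1)}{\lambda_1 N}t^{\theta N/s-p}-C t^{q-p}=\lambda|h|_*S_p^p$, which for small $\lambda$ behaves like $\widetilde\rho_\lambda\sim\big(\frac{\lambda_1 N\lambda|h|_*S_p^p}{\tau s\mathscr M(1)}\big)^{s/(N\theta-sp)}\to 0$ as $\lambda\to 0$, and on a neighborhood slightly inside this root one still has $I_\lambda(u)\ge\delta_2$ with a uniform $\delta_2>0$ (shrinking $\Lambda_2$ if necessary).

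The main obstacle is purely bookkeeping: making sure the same $\delta_2$ works uniformly for all $\lambda\in(0,\Lambda_2)$ while simultaneously forcing $\widetilde\rho_\lambda\to0$. These two demands pull in opposite directions (a smaller radius tends to shrink the lower bound), so the cleanest route is the two-scale choice sketched above — fix an auxiliary radius $t_0$ to pin down a uniform $\delta_2$ for the first claim, and then observe separately that the \emph{positivity region} $\{t:g(t)>0\}$ is an interval $(\widetilde\rho_\lambda^-,\widetilde\rho_\lambda^+)$ whose left endpoint tends to $0$; taking $\widetilde\rho_\lambda$ to be any point in this interval with $\widetilde\rho_\lambda\to0$ (for instance $\widetilde\rho_\lambda=2\widetilde\rho_\lambda^-$ once $\lambda$ is small enough that this still lies in the interval and still gives $I_\lambda\ge\delta_2$) yields both statements. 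I expect no analytic difficulty beyond Hölder's inequality and the embedding constants already recorded in Section \ref{sec2} and in Lemma \ref{lem2.3}.
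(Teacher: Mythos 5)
Your approach is essentially the same as the paper's: the same lower bound $I_\lambda(u)\ge\frac{\tau s\mathscr M(1)}{\lambda_1 N}\|u\|^{\theta N/s}-C(N,s,\alpha)\|u\|^q-\lambda S_p^p\|h\|_{L^{N/(N-sp)}}\|u\|^p$ followed by factoring out $\|u\|^p$ and exploiting $p<\theta N/s<q$ to pick a radius in $(0,1)$ where the bracket is positive, with $\Lambda_2$ defined so the $\lambda$-term does not destroy positivity. The one place where you overreach is the parenthetical claim that picking $\widetilde\rho_\lambda$ slightly above the lower root $\widetilde\rho_\lambda^-$ of $g$ preserves a $\lambda$-\emph{uniform} $\delta_2$ as $\widetilde\rho_\lambda\to 0$: if $\widetilde\rho_\lambda\to 0$ then $I_\lambda$ restricted to $\partial B_{\widetilde\rho_\lambda}$ is at most of order $\widetilde\rho_\lambda^{N/s}\to 0$ (since $\mathscr M(t)\lesssim t$ near $0$ and the remaining terms only push $I_\lambda$ down), so one cannot have both a uniform $\delta_2$ and $\widetilde\rho_\lambda\to 0$. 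This is, however, a sloppiness already present in the paper: in the paper's own proof the lower bound is called $\delta_1$ and manifestly depends on $\lambda$ (it contains the term $-\lambda S_p^p\|h\|_{L^{N/(N-sp)}}$), and only strict positivity $\inf_{\partial B_{\widetilde\rho_\lambda}}I_\lambda>0$ is actually used downstream in the proof of Theorem \ref{th2}; the paper also never explicitly proves the ``$\widetilde\rho_\lambda\to 0$'' claim, whereas you at least identify the left root $\widetilde\rho_\lambda^-\sim\lambda^{s/(N\theta-sp)}\to 0$ that makes this plausible. So: same method, correct in substance, but drop the assertion of $\lambda$-uniformity once you let $\widetilde\rho_\lambda\to 0$.
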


\begin{proof}
By using  \eqref{c6}, \eqref{M1} and the H\"{o}lder inequality,  we obtain for all $u\in W_V^{s,N/s}(\mathbb{R}^N)$, with $\|u\|\leq 1$ small enough,
\begin{align*}
 I_\lambda(u)&\geq\frac{s\mathscr{M}(1)}{N}\|u\|^{\theta N/s}-\frac{s\mathscr{M}(1)}{N}\frac{(\lambda_1-\tau)}{\lambda_1}\|u\|^{\theta N/s}- C(N,s,\alpha)\|u\|^q-\lambda
\|h\|_{L^{\frac{N}{N-sp}}(\mathbb{R}^N)}\|u\|_{L^{N/s}(\mathbb{R}^N)}^p\nonumber\\
&= \frac{\tau s\mathscr{M}(1)}{\lambda_1 N}\|u\|^{\theta N/s}-C(N,s,\alpha)\|u\|^q-\lambda
 S_p^p
\|h\|_{L^{\frac{N}{N-sp}}(\mathbb{R}^N)}\|u\|^p.
\end{align*}
Hence,
\begin{align*}
I_\lambda(u)\geq\left(\frac{\tau s\mathscr{M}(1)}{\lambda_1 N}\|u\|^{\theta N/s-p}-C(N,s,\alpha)\|u\|^{q-p}-\lambda
 S_p^p
\|h\|_{L^{\frac{N}{N-sp}}(\mathbb{R}^N)}\right)\|u\|^p.
\end{align*}
Since $1<p<\theta N/s<q$, we can choose $\widetilde{\rho}_\lambda\in (0,1)$ such that $\frac{\tau s\mathscr{M}(1)}{\lambda_1 N}\widetilde{\rho}_\lambda^{\theta N/s-1}-C(N,s,\alpha)\widetilde{\rho}_\lambda^{q-1}>0$. Thus, $I_\lambda(u)\geq \delta_1:=
\left(\frac{\tau s\mathscr{M}(1)}{\lambda_1 N}\widetilde{\rho}_\lambda^{\theta N/s-p}-C(N,s,\alpha)\widetilde{\rho}_\lambda^{q-p}-\lambda S_p^p
\|h\|_{L^{\frac{N}{N-sp}}(\mathbb{R}^N)}\right)\widetilde{\rho}_\lambda^p>0$ for all $u\in W_V^{s,N/s}(\mathbb{R}^N)$, with $\|u\|=\widetilde{\rho}_\lambda$ and all $0<\lambda<\Lambda_2:=\left(\frac{\tau s\mathscr{M}(1)}{\lambda_1 N}\widetilde{\rho}_\lambda^{\theta N/s-p}-C(N,s,\alpha)\widetilde{\rho}_\lambda^{q-p}\right)/(S_p^p
\|h\|_{L^{\frac{N}{N-sp}}(\mathbb{R}^N)})$.
\end{proof}

\begin{lemma}\label{lem4.2}
There exists
$\Lambda_3>0$ such that for all $0<\lambda<\Lambda_3$, the functional
$I_\lambda$ satisfies the $(PS)_c$ condition for $c\leq0$.
\end{lemma}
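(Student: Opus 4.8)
\textbf{Proof proposal for Lemma \ref{lem4.2}.}

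The plan is to take a $(PS)_c$ sequence $\{u_n\}_n$ with $c\leq 0$ and first establish boundedness in $W_V^{s,N/s}(\mathbb{R}^N)$, then promote weak convergence to strong convergence. For boundedness, I would combine $I_\lambda(u_n)\to c$ with $\langle I_\lambda'(u_n),u_n\rangle = o(1)\|u_n\|$, taking the linear combination $I_\lambda(u_n)-\frac{1}{\mu}\langle I_\lambda'(u_n),u_n\rangle$ when $\mu\le p$ and $I_\lambda(u_n)-\frac{1}{p}\langle I_\lambda'(u_n),u_n\rangle$ when $\mu>p$, exactly as in Lemma \ref{lem3.5}. Using $(f_2)$ (to kill the $f$-terms with the correct sign) and $(M_2)$ (to replace $M(\|u_n\|^{N/s})\|u_n\|^{N/s}$ by a multiple of $\mathscr{M}(\|u_n\|^{N/s})\ge \mathscr{M}(1)\|u_n\|^{\theta N/s}$ when $\|u_n\|\ge 1$), one gets an inequality of the form $C_0\|u_n\|^{\theta N/s} \le c + o(1) + o(1)\|u_n\| + C\lambda\|h\|_{L^{N/(N-sp)}}\|u_n\|^p$ for $\|u_n\|$ large; since $p<\theta N/s$ and $c\le 0$, this forces $\{u_n\}_n$ bounded. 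Up to a subsequence, $u_n\rightharpoonup u$ weakly, $u_n\to u$ in $L^\nu(\mathbb{R}^N)$ for all $\nu\ge N/s$ by Theorem \ref{th2.1}, and $u_n\to u$ a.e.

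The crucial point — and the main obstacle — is to control the exponential term $\int_{\mathbb{R}^N}\Phi_\alpha(u_n)(u_n-u)\,dx$, which requires a uniform bound $\limsup_n\|u_n\|^{N/(N-s)}\alpha_0<\alpha_{N,s}$ so that Lemma \ref{lem2.2} applies. Here I would exploit that $c\le 0$: from the boundedness inequality above, the threshold level $c$ being non-positive (rather than a large positive mountain-pass value) pins $\limsup_n\|u_n\|$ below a constant that shrinks as $\lambda\to 0$; choosing $\Lambda_3>0$ small enough guarantees $\limsup_n\|u_n\| < (\alpha_{N,s}/\alpha_0)^{(N-s)/N}$ for all $0<\lambda<\Lambda_3$. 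Indeed, when $c\le 0$ one expects $\|u_n\|\to 0$ unless $u\neq 0$; in the degenerate case one must argue slightly more carefully using $(M_1)$ with $\tau=\liminf\|u_n\|^{N/s}$, but the conclusion is the same sub-threshold estimate. This is the step where the sign condition $c\le0$ and the smallness of $\lambda$ are genuinely used.

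Once the sub-threshold bound is in hand, the rest is routine. Testing $\langle I_\lambda'(u_n),u_n-u\rangle = o(1)$ and splitting: the term $\int h|u_n^+|^{p-2}u_n^+(u_n-u)\,dx\to 0$ by Hölder and $L^{N/s}$-convergence (using $h\in L^{N/(N-sp)}$); the polynomial part $\int|u_n|^{\theta N/s-1}|u_n-u|\,dx\to 0$ by Hölder and $L^{\theta N/s}$-convergence; and $\int\Phi_{\alpha_0}(u_n)|u_n-u|\,dx\to 0$ by Lemma \ref{lem2.2} together with the sub-threshold bound and $L^\nu$-convergence. Since the functional $L(v)$ defined by $\langle L(v),w\rangle = \langle v,w\rangle_{s,N/s}+\int V|v|^{N/s-2}vw\,dx$ is bounded and linear, weak convergence gives $\langle L(u),u_n-u\rangle\to 0$, so subtracting yields
\begin{align*}
M(\|u_n\|^{N/s})\,\Big[\langle u_n,u_n-u\rangle_{s,N/s}-\langle u,u_n-u\rangle_{s,N/s}+\int_{\mathbb{R}^N}V\big(|u_n|^{N/s-2}u_n-|u|^{N/s-2}u\big)(u_n-u)\,dx\Big]=o(1).
\end{align*}
If $\|u_n\|\to 0$ we are done immediately; otherwise $\liminf\|u_n\|>0$ and $(M_1)$ bounds $M(\|u_n\|^{N/s})$ away from zero, so the bracketed quantity is $o(1)$, and the elementary convexity inequality $(|\xi|^{N/s-2}\xi-|\eta|^{N/s-2}\eta)\cdot(\xi-\eta)\ge C_{s,N}|\xi-\eta|^{N/s}$ forces $\|u_n-u\|\to 0$. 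Hence $I_\lambda$ satisfies $(PS)_c$ for $c\le 0$ whenever $0<\lambda<\Lambda_3$.
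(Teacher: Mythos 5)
Your overall architecture matches the paper's: boundedness via a linear combination of $I_\lambda(u_n)$ and $\langle I_\lambda'(u_n),u_n\rangle$, then a sub-threshold bound on $\limsup\|u_n\|$ exploiting $c\le 0$ and the smallness of $\lambda$, then the strong convergence argument of Lemma~\ref{lem3.6} via Lemma~\ref{lem2.2}, the bounded linear operator $L$, and the $N/s$-Laplacian monotonicity inequality. That skeleton is correct. However, two of your intermediate steps are wrong as stated.

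First, the direction of the $(M_2)$ inequality is reversed. Since $tM(t)\le\theta\mathscr{M}(t)$ implies that $t\mapsto\mathscr{M}(t)/t^\theta$ is non-increasing, one has $\mathscr{M}(t)\le\mathscr{M}(1)t^\theta$ for $t\ge 1$, which is precisely the paper's \eqref{M2}; the lower bound $\mathscr{M}(t)\ge\mathscr{M}(1)t^\theta$ in \eqref{M1} holds only for $t\in[0,1]$. So you cannot write $\mathscr{M}(\|u_n\|^{N/s})\ge\mathscr{M}(1)\|u_n\|^{\theta N/s}$ for $\|u_n\|\ge 1$, and your claimed bound $C_0\|u_n\|^{\theta N/s}\le c+o(1)+o(1)\|u_n\|+C\lambda\|h\|\|u_n\|^p$ does not follow. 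The correct coercive estimate, as in the paper, uses $(M_2)$ only in the direction $\mathscr{M}(t)\ge\frac{1}{\theta}tM(t)$ to get $\bigl(\frac{s}{N\theta}-\frac{1}{\mu}\bigr)M(\|u_n\|^{N/s})\|u_n\|^{N/s}$ on the left, and then invokes $(M_1)$ (after the case split $d:=\inf_n\|u_n\|>0$) to bound $M(\|u_n\|^{N/s})\ge\kappa(d^{N/s})$; this gives a term of order $\|u_n\|^{N/s}$, not $\|u_n\|^{\theta N/s}$, which still beats $\|u_n\|^p$ since $p<N/s$.

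Second, the case split "$I_\lambda-\frac{1}{p}\langle I_\lambda',\cdot\rangle$ when $\mu>p$" is copied from Lemma~\ref{lem3.5} but fails in this regime. Here $1<p<N/s\le N\theta/s<\mu$, so $\mu>p$ always, and the coefficient $\frac{s}{N\theta}-\frac{1}{p}$ is \emph{negative}; that combination yields no information. The paper instead uses $I_\lambda-\frac{1}{\mu}\langle I_\lambda',\cdot\rangle$ (as in \eqref{eq3.9}), which gives the positive coefficient $\frac{s}{N\theta}-\frac{1}{\mu}>0$ and, crucially, keeps the $\lambda h$-term on the right-hand side — this term is exactly what feeds the sub-threshold estimate $\limsup_n\|u_n\|\le\bigl[\lambda\cdot\text{const}/\kappa(d)\bigr]^{s/(N-sp)}$. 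Your subsequent displayed inequality does retain the $\lambda$-term, so you implicitly used the correct combination; but as written the case split would derail the proof. These two points should be corrected, after which your argument coincides with the paper's.
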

\begin{proof}
Fix $c\leq0$ and assume that $\{u_n\}_n\subset
W_V^{s,N/s}(\mathbb{R}^N)$ satisfies
\begin{align*}
I_\lambda(u_n)\rightarrow c,\ \ I_\lambda^\prime(u_n)\rightarrow0
\ \ {\rm as}\ n\rightarrow\infty.
\end{align*}
If $d:=\inf_{n\geq1}\|u_n\|=0$, then up to a subsequence, we can get that $u_n\rightarrow0$ in $W_V^{s,N/s}(\mathbb{R}^N)$.

In the following, we assume that $d:=\inf_{n\geq1}\|u_n\|>0$.
Proceeding as in \eqref{eq3.9}, we can deduce
\begin{align*}
\left(\frac{s}{N\theta}-\frac{1}{\mu}\right)M(\|u_n\|^{N/s})\|u_n\|^{N/s}\leq
\lambda\left(\frac{1}{p}-\frac{1}{\mu}\right)S_p^p\|h\|_{L^{\frac{N}{N-sp}}(\mathbb{R}^N)}\|u_n\|^p+c+o(1)+o(1)\|u_n\|,
\end{align*}
which means that $\{u_n\}_n$ is bounded in $W_V^{s,N/s}(\mathbb{R}^N)$.
By $(M_1)$, we then get

\begin{align*}
\left[\left(\frac{s}{N\theta}-\frac{1}{\mu}\right)
\kappa(d)\|u_n\|^{N/s-p}-
\lambda\left(\frac{1}{p}-\frac{1}{\mu}\right)S_p^p\|h\|_{L^{\frac{N}{N-sp}}(\mathbb{R}^N)}\right]
\|u_n\|^p\leq o(1)+o(1)\|u_n\|.
\end{align*}
It follows that
\begin{align*}
\limsup_{n\rightarrow\infty}\|u_n\|\leq \left[\frac{\kappa(d)}{\left(\frac{s}{N\theta}-\frac{1}{\mu}\right)}
\lambda\left(\frac{1}{p}-\frac{1}{\mu}\right)S_p^p\|h\|_{L^{\frac{N}{N-sp}}(\mathbb{R}^N)}\right]^{\frac{s}{N-sp}}
\end{align*}
Set $$\Lambda_3=\frac{p(s\mu-N\theta)}{\kappa(d)N\theta(\mu-p)S_p^p\|h\|_{L^{\frac{N}{N-sp}}(\mathbb{R}^N)}}
\left(\frac{\alpha_{N,s}}{\alpha_0}\right)^{\frac{(N-s)(N\theta-s)}{Ns}}.$$
Then for all $0<\lambda<\Lambda_3$, we get
\begin{align*}
\limsup_{n\rightarrow\infty}
\|u_n\|<\left(\frac{\alpha_{N,s}}{\alpha_0}\right)^{\frac{N-s}{N}}.
\end{align*}
By using the same argument as in Lemma \ref{lem3.6}, we can prove that $I_\lambda$ satisfies the $(PS)_c$ condition for all $c\leq0$.
\end{proof}
\begin{proof}[\bf Proof of Theorem \ref{th2}]
Choosing a function $0\leq v\in W_V^{s,N/s}(\mathbb{R}^N)\setminus\{0\}$ with $\|v\|=1$ and $\int_{\mathbb{R}^N}h(x)v^pdx>0$,
we can deduce from $(f_2)$ that
\begin{align*}
I_\lambda(tv)\leq \left(\max_{0\leq \tau\leq 1}M(\tau)\right)\frac{st^{N/s}}{N}
-\lambda t^p\int_{\mathbb{R}^N}h(x)|v|^pdx
\end{align*}
for all $0\leq t\leq 1$.
Since $N/s>p$, it follows that $I_\lambda(tv)<0$ for $t\in(0,1)$ small enough. Thus,
\begin{align*}
c=\inf_{u\in \overline{B}_{\widetilde{\rho}_\lambda}}I_\lambda(u)<0\ \ {\rm and}\
\inf_{u\in\partial B_{\widetilde{\rho}_\lambda}}I_\lambda(u)>0,
\end{align*}
where $\widetilde{\rho}_\lambda>0$ is given by Lemma \ref{lem4.1} and
$B_{\widetilde{\rho}_\lambda}=\{u\in W_V^{s,N/s}(\mathbb{R}^N):\|u\|<\widetilde{\rho}_\lambda\}$. We can choose $\lambda$ small enough such that
\begin{align*}
\widetilde{\rho}_\lambda<\left(\frac{\alpha_{N,s}}{\alpha_0}\right)^{(N-s)/N}.
\end{align*}
Let $\varepsilon_n\rightarrow 0$ be such that
\begin{align}\label{eq4.11}
0<\varepsilon_n<\inf_{u\in\partial B_{\widetilde{\rho}_\lambda}}I_\lambda(u)-
\inf_{u\in \overline{B}_{\widetilde{\rho}_\lambda}}I_\lambda(u).
\end{align}
By Ekeland's variational principle, there exists $\{u_n\}\subset \overline{B}_{\widetilde{\rho}_\lambda}$ such that
\begin{align*}
c\leq I_\lambda(u_n)\leq c+\varepsilon_n
\end{align*}
and
\begin{align*}
I_\lambda(u_n)<I_\lambda (w)+\varepsilon\|u_n-w\|,\ \ \forall w\in \overline{B}_{\widetilde{\rho}_\lambda}, w\neq u_n.
\end{align*}
Then, from \eqref{eq4.11} and the definition of $c$, we get
\begin{align*}
I_\lambda(u_n)\leq c+\varepsilon_n=\inf_{u\in\overline{B}_{\widetilde{\rho}_\lambda}}I_\lambda(u)+\varepsilon_n<\inf_{u\in\partial B_{\widetilde{\rho}_\lambda}}I_\lambda(u)
\end{align*}
and thus $\{u_n\}_n\subset B_{\widetilde{\rho}_\lambda}$.

Consider the sequence $v_n=u_n+t\varphi\subset B_{\widetilde{\rho}_\lambda}$ for all $\varphi\in B_1$ and $t>0$ small enough. Then it follows that
\begin{align*}
\frac{I_\lambda(u_n+t\varphi)-I_\lambda(u_n)}{t}\geq -\varepsilon_n\|\varphi\|.
\end{align*}
Passing to the limit as $t\rightarrow 0$, we deduce
\begin{align*}
\langle I^\prime_\lambda(u_n),\varphi\rangle\geq -\varepsilon_n\|\varphi\|,\ \ \forall \varphi\in B_1.
\end{align*}
Replacing $\varphi$ with $-\varphi$, we have
\begin{align*}
\langle I^\prime_\lambda(u_n),-\varphi\rangle\geq -\varepsilon_n\|\varphi\|,\ \ \forall \varphi\in B_1.
\end{align*}
Then
\begin{align*}
|\langle I^\prime_\lambda(u_n),\varphi\rangle|\leq \varepsilon_n,\ \ \forall \varphi\in B_1
\end{align*}
and thus
\begin{align*}
\|I_\lambda^\prime(u_n)\|\rightarrow0\ \ {\rm as}\ n\rightarrow\infty.
\end{align*}
Therefore there exists a sequence $\{u_n\}_n\subset B_{\rho_\lambda}$ such that
$I_\lambda(u_n)\rightarrow c\leq0$ and $I^\prime(u_n)\rightarrow 0$, as $n\rightarrow\infty$. Observing that
\begin{align*}
\|u_n\|\leq \widetilde{\rho}_\lambda<\left(\frac{\alpha_{N,s}}{\alpha_0}\right)^{(N-s)/N},
\end{align*}
by Lemma \ref{lem4.2}, there exists $0<\lambda_*\leq\min\{\Lambda_2,\Lambda_3\}$ such that for all $\lambda\in(0,\lambda_*)$, $\{u_n\}_n$ has a convergent subsequence, still denoted by $\{u_n\}_n$, such that $u_n\rightarrow u_\lambda$ in $W_V^{s,N/s}(\mathbb{R}^N)$. Thus, $u_\lambda$ is a nontrivial nonnegative solution with $I_\lambda(u_\lambda)<0$.  Moreover, $\|u_\lambda\|\leq \widetilde{\rho}_\lambda\rightarrow 0$ as $\lambda\rightarrow0$. Hence, the proof is complete.
\end{proof}

\section{Proof of Theorem \ref{th3}}\label{sec5}
In this section, we discuss the multiplicity of solutions for \eqref{eq1}.  To this end, we first recall some basic notions about the Krasnoselskii genus.

Let $X$ be a Banach space and $A$ a subset of $X$. $A$ is said to be symmetric if $u\in A$ implies
$-u\in A$. Let us denote by $\Xi$ the family of closed symmetric subsets $A\subset X\setminus\{0\}$.
\begin{definition}
Let $A\in \Xi$. The Krasnoselskii genus $\gamma(A)$ of $A$ is defined as
the least positive integer $k$ such that there is an odd mapping $\varphi\in C(A,\mathbb{R}^k)$ such
that $\varphi(x)\neq 0$ for all $x\in A$.  If $k$ does not exist, we set $\gamma(A)=\infty$. Furthermore, by
definition, $\gamma(\emptyset)=0$.
\end{definition}
In the sequel we list some properties of the genus that will be
used later. For more details on this subject, we refer to \cite{Rabin}.
\begin{proposition}\label{Prop5.1}
Let $A,B$ be sets in $\Xi$.
\begin{itemize}
\item[$(1)$] If there exists an odd map $\varphi\in C(A,B)$, then $\gamma(A)\leq \gamma(B)$.
\item[$(2)$] If $A\in \Gamma$ and $\gamma(A)\geq 2$, then $A$ has infinitely
many points.
\item[$(3)$] If $A\subset B$, then $\gamma(A)\leq \gamma(B)$.
\item[$(4)$] $\gamma(A\bigcup B)\leq \gamma(A)+\gamma(B)$.
\item[$(5)$] If $\mathbb{S}$ is a sphere centered at the origin in $\mathbb{R}^k$, then
$\gamma(\mathbb{S})=k$.
\item[$(6)$] If $A$ is compact, then $\gamma(A)<\infty$ and
there exists $\delta>0$ such that $N_\delta(A)\in\Xi$ and
$\gamma(N_\delta(A))=\gamma(A)$, where $N_\delta(A)=
\{x\in X: \|x-A\|\leq \delta\}.$
 \end{itemize}
\end{proposition}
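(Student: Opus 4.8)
The plan is to establish the six properties one at a time, using nothing beyond the definition of $\gamma$ together with two standard facts: the Tietze extension theorem (to prolong odd maps off closed subsets of the metric space $X$) and the Borsuk--Ulam theorem (needed only once, for the lower bound in (5)). A device I would set up at the outset and reuse throughout: given an odd continuous map $\varphi$ on a closed symmetric set, extend it to all of $X$ by Tietze to get $\widetilde\varphi$ and then pass to its odd part $\widehat\varphi(x)=\tfrac12\big(\widetilde\varphi(x)-\widetilde\varphi(-x)\big)$, which is continuous, odd on all of $X$, and coincides with $\varphi$ on the original domain.

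Items (1) and (3) are essentially immediate. If $k=\gamma(B)<\infty$ and $\psi\in C(B,\mathbb{R}^k\setminus\{0\})$ is odd, then for any odd $\varphi\in C(A,B)$ the composition $\psi\circ\varphi$ is an odd element of $C(A,\mathbb{R}^k\setminus\{0\})$, so $\gamma(A)\le k$; property (3) is the case where $\varphi$ is the inclusion. For (4), assume $m=\gamma(A)$ and $n=\gamma(B)$ are finite, take odd nonvanishing maps $\varphi\colon A\to\mathbb{R}^m$ and $\psi\colon B\to\mathbb{R}^n$, replace them by odd extensions $\widehat\varphi,\widehat\psi$ to $X$ as above, and observe that $(\widehat\varphi,\widehat\psi)$ is an odd map on $A\cup B$ into $\mathbb{R}^{m+n}$ that never vanishes (the first block is nonzero on $A$, the second on $B$); hence $\gamma(A\cup B)\le m+n$. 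For (2), suppose $A\in\Xi$ is finite; since $0\notin A$ we have $a\ne-a$ for every $a\in A$, so the antipodal pairs partition $A$ and I may choose one representative from each to form a set $A^+$ with $A^+\cap(-A^+)=\emptyset$; as $A$ is finite (hence discrete) the function equal to $+1$ on $A^+$ and $-1$ on $-A^+$ is continuous, odd, and nowhere zero, giving $\gamma(A)\le1$, contrary to $\gamma(A)\ge2$.

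For (5), $\gamma(\mathbb{S})\le k$ follows from (1) via the inclusion $\mathbb{S}\hookrightarrow\mathbb{R}^k\setminus\{0\}$; if we had $\gamma(\mathbb{S})=j<k$ there would exist an odd continuous map from $\mathbb{S}$ into $\mathbb{R}^{j}\setminus\{0\}\subset\mathbb{R}^{k-1}\setminus\{0\}$, and composing with the odd radial rescaling from the unit sphere we would get an odd map $\mathbb{S}^{k-1}\to\mathbb{R}^{k-1}\setminus\{0\}$, which the Borsuk--Ulam theorem forbids, so $\gamma(\mathbb{S})=k$. For (6), the finiteness of $\gamma(A)$ is proved by a covering argument: for each $x\in A$ pick $r_x>0$ with $\overline{B}(x,r_x)\cap\overline{B}(-x,r_x)=\emptyset$ (possible since $x\ne0$), extract by compactness a finite subcover of $A$ by balls $B_i:=B(x_i,r_{x_i}/2)$, $i=1,\dots,n$, set $\phi_i(y)=\mathrm{dist}\big(y,X\setminus B_i\big)$ and $\psi_i(y)=\phi_i(y)-\phi_i(-y)$; then $\Psi=(\psi_1,\dots,\psi_n)$ is odd and continuous, and it is nonzero on $A$ because any $y\in A$ lies in some $B_i$, which forces $-y\notin B_i$ by the disjointness and hence $\psi_i(y)=\phi_i(y)>0$ --- so $\gamma(A)\le n$. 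For the neighborhood statement, set $k=\gamma(A)$, take an odd $\varphi\in C(A,\mathbb{R}^k\setminus\{0\})$ and its odd extension $\widehat\varphi$ to $X$; since $A$ is compact, $\min_{x\in A}\|\widehat\varphi(x)\|>0$, so by continuity and compactness there is $\delta>0$ with $\widehat\varphi\ne0$ on $N_\delta(A)$, and shrinking $\delta$ so that also $\delta<\mathrm{dist}(0,A)$ (which is positive) we ensure $0\notin N_\delta(A)$. Then $N_\delta(A)$ is closed (a sublevel set of $x\mapsto\mathrm{dist}(x,A)$), symmetric (because $\mathrm{dist}(-x,A)=\mathrm{dist}(x,A)$), and misses $0$, so $N_\delta(A)\in\Xi$; the restriction of $\widehat\varphi$ yields $\gamma(N_\delta(A))\le k$, while $A\subset N_\delta(A)$ and (3) give the reverse inequality, hence $\gamma(N_\delta(A))=\gamma(A)$.

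The genuinely nontrivial input is the lower bound in (5): everything else is extension-and-topology bookkeeping, but $\gamma(\mathbb{S})\ge k$ rests squarely on Borsuk--Ulam, so I would cite it explicitly there. The second point needing care is the finiteness half of (6), whose whole content is the explicit construction of the odd nonvanishing map into $\mathbb{R}^n$; the antipode-separated covering above is designed precisely so that $\Psi$ cannot vanish, and I would highlight the disjointness $\overline{B}(x_i,r_{x_i})\cap\overline{B}(-x_i,r_{x_i})=\emptyset$ as the mechanism that kills $\phi_i(-y)$.
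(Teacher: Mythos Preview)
Your proof is correct and follows the standard textbook arguments (Tietze extension plus odd symmetrization, Borsuk--Ulam for the sphere, and an antipode-separated covering for compact sets). However, the paper itself does not prove Proposition~\ref{Prop5.1}: it merely states the properties and refers the reader to Rabinowitz~\cite{Rabin} with the sentence ``For more details on this subject, we refer to \cite{Rabin}.'' So there is no proof in the paper to compare against; you have supplied what the paper deliberately omitted, and your argument is essentially the one found in Rabinowitz's monograph and similar sources.
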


Define
\begin{align*}
& \mathcal{I}_\lambda(u)=\frac{s}{N}\mathscr{M}(\|u\|^{N/s})
-\int_{\mathbb{R}^N}F(x,u)dx-
\lambda\int_{\mathbb{R}^N}h(x)|u|^{p}dx,
\end{align*}
for all $u\in W_V^{s,N/s}(\mathbb{R}^N)$. Clearly, under assumption $(f_1)$, $\mathcal{I}_\lambda\in C^1(W_V^{s,N/s}(\mathbb{R}^N),\mathbb{R})$ and the critical points are the weak solutions of \eqref{eq1}.

Following the ideas of \cite{AA} (see also \cite{HZ}), we construct a truncated functional $J_\lambda$ such that critical points $u$ of $J_\lambda$ with $J_\lambda(u)<0$ are also critical points of $\mathcal{I}_\lambda$. Since the system \eqref{eq1} contains a nonlocal coefficient $M(\|u\|^{N/s})$ and the operator $(-\Delta)_{N/s}^s$ is nonlocal, our task is complicated. To overcome these difficulties in the building of $J_\lambda$, we split the discussion into two cases $\|u\|\leq 1$ and $\|u\|>1$.
\smallskip

\noindent{\it Case 1}: $\|u\|\leq 1$. By $(f_1)$ and $(f_3^\prime)$, we obtain for any $\tau\in(0,\lambda_1)$ and $q>N\theta/s$ there exists $C>0$ such that
 \begin{align*}
F(x,u)\leq (a+b)\frac{s}{N}(\lambda_1-\tau)|u|^{N\theta/s}+C|u|^q\Phi_\alpha(u)
\end{align*}
for all $u\in\mathbb{R}$ and $x\in\mathbb{R}^N$. Furthermore, from the definition of $\mathcal{I}$, there exist $\tau\in(0,\lambda_1)$ and $C(N, s, \alpha)>0$ such that
\begin{align*}
\mathcal{I}_\lambda(u)\geq\frac{\tau s(a+b)}{\lambda_1 N}\|u\|^{\theta N/s}-C(N,s,\alpha)\|u\|^q-\frac{\lambda
S_{N/s}^p}{p}
\|h\|_{L^{\frac{N}{N-sp}}(\mathbb{R}^N)}\|u\|^p
\end{align*}
for all $u\in W_V^{s,N/s}(\mathbb{R}^N)$ with $\|u\|\leq 1$, where $S_{N/s}$ is the embedding constant from $W_V^{s,N/s}(\mathbb{R}^N)$ to $L^{N/s}(\mathbb{R}^N)$.
Define $$G_\lambda(t)=\frac{\tau s(a+b)}{\lambda_1 N}t^{\theta N/s}-C(N,s,\alpha)t^q-\frac{\lambda
 S_{N/s}^p}{p}
\|h\|_{L^{\frac{N}{N-sp}}(\mathbb{R}^N)}t^p,$$
for all $t\geq0$. Then
\begin{align}\label{eq5.1}
\mathcal{I}_\lambda(u)\geq G_\lambda(\|u\|)
\end{align}
for all $u\in W_V^{s,N/s}(\mathbb{R}^N)$ with $\|u\|\leq 1$.
 Since $p<\theta N/s<q$, there exists $\lambda_{**}\in (0,\lambda_*]$ small enough such that $G_\lambda$ attains its positive maximum for $\lambda\in (0,\lambda_{**})$. Here $\lambda_*>0$ is given by Theorem \ref{th2}. Denote by $0<T_0(\lambda)<T_1(\lambda)$ the unique two positive roots of $G_\lambda(t)=0$. Indeed, to get the solutions of $G_\lambda(t)=0$ for all $t>0$, one can consider $\widetilde{G}_\lambda$ defined as
\begin{align*}
\widetilde{G}_\lambda(t)=\left[\frac{\tau s(a+b)}{\lambda_1 N}t^{\theta N/s-p}-C(N,s,\alpha)t^{q-p}-\frac{\lambda
S_{N/s}^p}{p}
\|h\|_{L^{\frac{N}{N-sp}}(\mathbb{R}^N)}\right]t^p
\end{align*}
for all $t\geq0$.

Actually, $T_0(\lambda)$ has the following property.
\begin{lemma}\label{lem5.1}
$\lim\limits_{\lambda\rightarrow 0^+}T_0(\lambda)=0$.
\end{lemma}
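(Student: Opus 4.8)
The plan is to reduce the statement to an elementary fact about the polynomial-type factor of $G_\lambda$. For $t>0$ one has $G_\lambda(t)=t^p\psi_\lambda(t)$, where
\[
\psi_\lambda(t)=a_1 t^{\beta_1}-a_2 t^{\beta_2}-b_\lambda,\qquad
a_1=\frac{\tau s(a+b)}{\lambda_1 N},\quad a_2=C(N,s,\alpha),\quad b_\lambda=\frac{\lambda S_{N/s}^p}{p}\,\|h\|_{L^{\frac{N}{N-sp}}(\mathbb{R}^N)},
\]
and the exponents $\beta_1=\theta N/s-p$, $\beta_2=q-p$ satisfy $0<\beta_1<\beta_2$ because $p<\theta N/s<q$. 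The positive zeros of $G_\lambda$ are exactly those of $\psi_\lambda$; in particular $T_0(\lambda)$ is the smallest positive zero of $\psi_\lambda$. The constants $a_1,a_2>0$ do not depend on $\lambda$, whereas $b_\lambda\to 0^+$ as $\lambda\to 0^+$.

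First I would record a local estimate near the origin. Since $\beta_1<\beta_2$, there is $t_0\in(0,1]$, independent of $\lambda$, such that $a_1 t^{\beta_1}-a_2 t^{\beta_2}\ge \tfrac12 a_1 t^{\beta_1}$ for all $t\in(0,t_0]$. Next, fix an arbitrary $\varepsilon\in(0,t_0)$ and choose $\lambda$ small enough that $b_\lambda<\tfrac12 a_1\varepsilon^{\beta_1}$; this is possible because $b_\lambda\to 0^+$. For such $\lambda$ we obtain $\psi_\lambda(\varepsilon)\ge \tfrac12 a_1\varepsilon^{\beta_1}-b_\lambda>0$, while $\psi_\lambda(0)=-b_\lambda<0$. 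Since $\psi_\lambda$ is continuous on $[0,\infty)$, the intermediate value theorem produces a zero of $\psi_\lambda$ in $(0,\varepsilon)$, whence $T_0(\lambda)<\varepsilon$. As $\varepsilon\in(0,t_0)$ was arbitrary, $\limsup_{\lambda\to0^+}T_0(\lambda)\le\varepsilon$ for every such $\varepsilon$, and therefore $T_0(\lambda)\to0$ as $\lambda\to0^+$.

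There is no genuine obstacle in this argument; the only points deserving a line of justification are that $T_0(\lambda)$ is well defined and that it is indeed the \emph{smallest} positive root. Both follow from the shape of $\psi_\lambda$: one computes $\psi_\lambda'(t)=t^{\beta_1-1}\big(a_1\beta_1-a_2\beta_2 t^{\beta_2-\beta_1}\big)$, so $\psi_\lambda$ is strictly increasing up to a unique interior maximum and strictly decreasing afterwards, with $\psi_\lambda(0)=-b_\lambda<0$ and $\psi_\lambda(t)\to-\infty$ as $t\to\infty$; for $\lambda<\lambda_{**}$ (chosen so that $G_\lambda$ attains a positive maximum) this maximum value is positive, so $\psi_\lambda$ has exactly two positive zeros $T_0(\lambda)<T_1(\lambda)$, consistent with the notation fixed before the lemma. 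This is all that is needed.
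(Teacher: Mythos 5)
Your proof is correct, and it takes a genuinely different route from the paper. The paper works directly with the simultaneous conditions $G_\lambda(T_0(\lambda))=0$ and $G_\lambda'(T_0(\lambda))>0$: combining these yields an explicit uniform bound on $T_0(\lambda)$, and then for any sequence $\lambda_k\to 0^+$ one extracts a convergent subsequence $T_0(\lambda_k)\to T_0$ and passes to the limit in the two relations to obtain an equality and an inequality whose joint satisfaction forces $T_0=0$; the arbitrariness of the sequence then gives the limit. You instead factor $G_\lambda(t)=t^p\psi_\lambda(t)$ (this is exactly the $\widetilde G_\lambda$ the paper introduces in passing but does not exploit), observe that the only $\lambda$-dependence sits in the constant term $b_\lambda\to 0^+$, and run a direct $\varepsilon$-argument with the intermediate value theorem: for any small $\varepsilon>0$, once $b_\lambda$ drops below a fixed fraction of $a_1\varepsilon^{\beta_1}$, the function $\psi_\lambda$ changes sign on $(0,\varepsilon)$, so the smallest positive root lies there. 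Your approach is more elementary — it avoids the compactness/subsequence machinery and isolates the single monotone dependence on $\lambda$ — while the paper's approach has the side benefit of producing an explicit $\lambda$-independent upper bound on $T_0(\lambda)$, though that bound is not used elsewhere. Your closing remark justifying that $T_0(\lambda)$ is the smaller of exactly two positive roots (from the unimodal shape of $\psi_\lambda$ with $\psi_\lambda(0)<0$ and $\psi_\lambda(t)\to-\infty$) matches the paper's setup and closes the only potential gap. One minor implicit hypothesis, shared with the paper, is that $h\not\equiv 0$ so that $b_\lambda>0$ for $\lambda>0$.
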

\begin{proof}
By $G_\lambda(T_0(\lambda))$ and $G_\lambda^\prime(T_0(\lambda))>0$, we have
\begin{align}\label{eq5.2}
\frac{\tau s(a+b)}{\lambda_1 N}T_0(\lambda)^{N\theta /s}=\frac{\lambda
S_{N/s}^p}{p}
\|h\|_{L^{\frac{N}{N-sp}(\mathbb{R}^N)}}
T_0(\lambda)^{p}+C(N,s,\alpha)
T_0(\lambda)^{q}
\end{align}
and
\begin{align}\label{eq5.3}
\frac{\tau \theta(a+b)}{\lambda_1 }T_0(\lambda)^{N\theta /s-1}>\lambda
 S_{N/s}^p
\|h\|_{L^{\frac{N}{N-sp}}(\mathbb{R}^N)}
T_0(\lambda)^{p-1}+qC(N,s,\alpha)
T_0(\lambda)^{q-1}.
\end{align}
Combining \eqref{eq5.2} and \eqref{eq5.3}, we get
\begin{align*}
T_0(\lambda)\leq \left(\frac{(a+b)(N\theta-ps)\tau}{\lambda_1N(q-p)C(N,s,\alpha)}\right)^{\frac{s}{sq-N\theta}},
\end{align*}
which means that $T_0(\lambda)$ is uniformly bounded with respect to $\lambda$.
Fix any sequence $\{\lambda_k\}_k\subset (0,\infty)$, with $\lambda_k\rightarrow 0$ as $k\rightarrow\infty$.
Assume that $T_0(\lambda_k)\rightarrow T_0$ as $k\rightarrow \infty$. Then by \eqref{eq5.2} and \eqref{eq5.3}, we have
\begin{align}\label{eq5.4}
\frac{\tau s(a+b)}{\lambda_1 N}T_0^{N\theta /s}=C(N,s,\alpha)
T_0^{q}
\end{align}
and
\begin{align}\label{eq5.5}
\frac{\tau \theta(a+b)}{\lambda_1 }T_0^{N\theta /s-1}\geq qC(N,s,\alpha)
T_0^{q-1}.
\end{align}
It follows from \eqref{eq5.4} and \eqref{eq5.5} that
\begin{align*}
\left(\frac{s}{N}-\frac{\theta}{q}\right)\frac{\tau(a+b)}{\lambda_1}T_0^{N\theta /s}\leq0,
\end{align*}
which implies that $T_0=0$, thanks to $q>N\theta /s$. The arbitrary choice of $\{\lambda_k\}_k$ yields that
$\lim\limits_{\lambda\rightarrow 0^+}T_0(\lambda)=0$.
This completes the proof.
\end{proof}
By Lemma \ref{lem5.1}, we can assume that $T_0(\lambda)<1$ for small enough $\lambda$. Thus, $T_0(\lambda)<\min\{T_1(\lambda),1\}$. Take
$\Psi\in C_0^\infty([0,\infty))$, $0\leq \Psi(\tau)\leq 1$ for all $\tau\geq 0$ and
\begin{align*}
\Psi(t)=
\begin{cases}
1,\ \  &\mbox{if}\ t\in [0,T_0(\lambda)],\\
0,\ \ &\mbox{if}\ t\in [\min\{T_1(\lambda),1\},\infty).
\end{cases}
\end{align*}
Then we define the functional
\begin{align*}
J_\lambda(u)=\frac{s}{N}\mathscr{M}(\|u\|^{N/s})-\frac{\lambda}{p}\int_{\mathbb{R}^N}h(x)|u|^{p}dx-\Psi(\|u\|)\int_{\mathbb{R}^N}F(x,u)dx.
\end{align*}
One can easily verify that $J_\lambda\in C^1(W_V^{s,N/s}(\mathbb{R}^N),\mathbb{R})$ and $J_\lambda(u)\geq H_\lambda(\|u\|)$ for all $u\in W_V^{s,N/s}(\mathbb{R}^N)$ with $\|u\|<1$, where
\begin{align}
H_\lambda(t):=\frac{\tau s(a+b)}{\lambda_1 N}t^{\theta N/s}-C(N,s,\alpha)\Psi(t)t^q-\frac{\lambda
S_{N/s}^p}{p}
\|h\|_{L^{\frac{N}{N-sp}}(\mathbb{R}^N)}t^p.
\end{align}

 Clearly, $H_\lambda(t)\geq G_\lambda(t)\geq 0$ for all $t\in (T_0(\lambda),\min\{T_1(\lambda),1\}]$. By the definitions of $\mathcal{I}_\lambda$ and $J_\lambda$, we know that $\mathcal{I}_\lambda(u)=J_\lambda(u)$ for all $\|u\|\leq T_0(\lambda)<\min\{T_1(\lambda),1\}$. Let $u$ be a critical point of $J_\lambda$ with $J_\lambda(u)<0$. If $\|u\|<T_0(\lambda)$, then $u$ is also a critical point of $I_\lambda$.  To show that $\|u\|<T_0(\lambda)$ it is important to ensure that $J_\lambda(u)\geq 0$  when $\|u\|\geq 1$.
\smallskip

\noindent{\it Case 2}: $\|u\|> 1$.
Note that in this case we always have $\Psi(\|u\|)=0$. Hence, for all $\|u\|>1$, we obtain by  $(M_3)$ that
\begin{align*}
J_\lambda(u)&=\frac{s}{N}\mathscr{M}(\|u\|^{N/s})-\frac{\lambda}{p}\int_{\mathbb{R}^N}h(x)|u|^{p}dx\\
&\geq \frac{s(a+b)}{N} \|u\|^{N/s}-\frac{\lambda}{p}S_{N/s}^p\|h\|_{L^{\frac{N}{N-sp}}(\mathbb{R}^N)}\|u\|^p\\
&=\widetilde{g}(\|u\|),
\end{align*}
where $\widetilde{g}:[0,\infty)\rightarrow\mathbb{R}$ is defined by
\begin{align*}
\widetilde{g}(t)=\frac{s(a+b)}{N} t^{N/s}-\frac{\lambda}{p}S_{N/s}^p\|h\|_{L^{\frac{N}{N-sp}}(\mathbb{R}^N)}t^p.
\end{align*}
It is easy to check that $\widetilde{g}$ has a global minimum point at $t_\lambda=\left(\frac{1}{(a+b)}\lambda S_{N/s}^p\|h\|_{L^{\frac{N}{N-sp}}(\mathbb{R}^N)}\right)^{\frac{s}{N-sp}}$ and
\begin{align*}
\widetilde{g}(t_\lambda)=\left(\frac{1}{(a+b)^{\frac{sp}{N}}}\lambda S_{N/s}^p\|h\|_{L^{\frac{N}{N-sp}}(\mathbb{R}^N)}\right)^{\frac{N}{N-sp}}\left(\frac{s}{N}-\frac{1}{p}\right)<0,
\end{align*}
being $p<N/s$. Observe that $\widetilde{g}(t)\geq 0$ if and only if $t\geq \left(\frac{\lambda N S_{N/s}^p}
{sp(a+b)}\|h\|_{L^{\frac{N}{N-sp}}(\mathbb{R}^N)}\right)^{\frac{s}{N-sp}}:=t_0 $. Thus, to ensure that $J_\lambda(u)\geq 0$ for all $\|u\|\geq 1$, we let $t_0<1$, that is, $\lambda< \frac{sp(a+b)}{N S_{N/s}^p \|h\|_{L^{\frac{N}{N-sp}}(\mathbb{R}^N)}}:=\lambda_0$.
Hence,
we take $\lambda_{**}\leq  \lambda_0$. Then for each $\lambda\in (0,\lambda_{**})$ we have $J_\lambda(u)\geq0$ for any $\|u\|> 1$.

\begin{lemma}\label{lem5.2}
Let $\lambda\in (0,\lambda_{**})$. If $J_\lambda(u)<0$, then $\|u\|<T_0(\lambda)$ and $J_\lambda(v)=\mathcal{I}_\lambda(v)$ for all $v$ in a small enough neighbourhood of $u$. Moreover, $J_\lambda$ satisfies a local $(PS)_c$ condition for all $c<0$.
\end{lemma}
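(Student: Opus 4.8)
The plan is to read off the first two assertions from the construction of $J_\lambda$ and then reduce the compactness statement to the argument already carried out for Lemma~\ref{lem3.6}. First I would establish the dichotomy ``$J_\lambda(u)<0\Rightarrow\|u\|<T_0(\lambda)$'' by contraposition, splitting according to the size of $\|u\|$. If $\|u\|>1$, then $\Psi(\|u\|)=0$ and the estimate of \emph{Case 2} gives $J_\lambda(u)\ge\widetilde g(\|u\|)$; since $\lambda_{**}\le\lambda_0$ we have $t_0<1$, hence $\widetilde g(t)\ge0$ for all $t\ge1$ and so $J_\lambda(u)\ge0$. If $T_0(\lambda)\le\|u\|\le1$, then the estimate of \emph{Case 1} gives $J_\lambda(u)\ge H_\lambda(\|u\|)$, and $H_\lambda(t)\ge G_\lambda(t)\ge0$ on $[T_0(\lambda),\min\{T_1(\lambda),1\}]$ because $G_\lambda$ is nonnegative on $[T_0(\lambda),T_1(\lambda)]$; on the remaining interval $[\min\{T_1(\lambda),1\},1]$ (nonempty only when $T_1(\lambda)<1$) one has $\Psi\equiv0$, so $H_\lambda(t)=\tfrac{\tau s(a+b)}{\lambda_1N}t^{\theta N/s}-\tfrac{\lambda S_{N/s}^p}{p}\|h\|_{L^{N/(N-sp)}(\mathbb R^N)}t^p$, which is nonnegative once $\lambda$ is small enough (using $\theta N/s>p$), a requirement I would absorb into a possibly smaller $\lambda_{**}$. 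Assembling the two cases yields $J_\lambda(u)\ge0$ whenever $\|u\|\ge T_0(\lambda)$, which is the claim.

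The second assertion is then immediate: if $\|u\|<T_0(\lambda)$, the open ball $\{v:\|v\|<T_0(\lambda)\}$ is a neighbourhood of $u$ on which $\Psi(\|v\|)\equiv1$, and there $J_\lambda$ coincides with $\mathcal I_\lambda$ by definition; in particular the Fr\'echet derivatives agree, $J_\lambda'(v)=\mathcal I_\lambda'(v)$, on that ball.

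For the local $(PS)_c$ condition with $c<0$, let $\{u_n\}_n$ satisfy $J_\lambda(u_n)\to c<0$ and $J_\lambda'(u_n)\to0$. For $n$ large $J_\lambda(u_n)<0$, so by the first part $\|u_n\|<T_0(\lambda)<1<\big(\alpha_{N,s}/\alpha_0\big)^{(N-s)/N}$; thus the sequence is bounded in $W_V^{s,N/s}(\mathbb R^N)$ and, crucially, stays strictly below the Trudinger--Moser threshold. By the second part, for $n$ large $\mathcal I_\lambda(u_n)=J_\lambda(u_n)$ and $\mathcal I_\lambda'(u_n)=J_\lambda'(u_n)\to0$, so $\{u_n\}_n$ is a bounded $(PS)_c$ sequence for $\mathcal I_\lambda$ with $\limsup_n\|u_n\|<(\alpha_{N,s}/\alpha_0)^{(N-s)/N}$. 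From here I would repeat the compactness argument of Lemma~\ref{lem3.6}: pass to a subsequence with $u_n\rightharpoonup u$ in $W_V^{s,N/s}(\mathbb R^N)$ and $u_n\to u$ in every $L^\nu(\mathbb R^N)$ with $\nu\ge N/s$ and a.e. by Theorem~\ref{th2.1}; test $\mathcal I_\lambda'(u_n)$ against $u_n-u$; control the weighted term $\lambda\int_{\mathbb R^N}h|u_n|^{p-2}u_n(u_n-u)\,dx$ via $h\in L^{N/(N-sp)}(\mathbb R^N)$ and strong $L^{N/s}$-convergence, and the nonlinear term $\int_{\mathbb R^N}f(x,u_n)(u_n-u)\,dx$ via $(f_1)$ together with Lemma~\ref{lem2.2} (applicable precisely because $\|u_n\|$ stays below the threshold) and the compact embeddings; finally invoke the elementary inequality $(|\xi|^{N/s-2}\xi-|\eta|^{N/s-2}\eta)\cdot(\xi-\eta)\ge C_{s,N}|\xi-\eta|^{N/s}$ to conclude $\|u_n-u\|\to0$.

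The only genuine difficulty is the loss of compactness caused by the exponential growth of $f$, and it is defused here by the a priori bound $\|u_n\|<T_0(\lambda)<1$, which forces every $(PS)_c$ sequence with $c<0$ to live strictly below the critical Trudinger--Moser level where Lemmas~\ref{lem2.2}--\ref{lem2.3} apply; everything else is bookkeeping inherited from the construction of $J_\lambda$, the only delicate point being to verify $J_\lambda\ge0$ on the whole annulus $T_0(\lambda)\le\|u\|\le1$ --- in particular on $[\min\{T_1(\lambda),1\},1]$, where the truncation is already inactive --- which may force one further shrinking of $\lambda_{**}$.
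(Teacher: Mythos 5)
Your argument is correct and follows essentially the same route as the paper: derive $\|u\|<T_0(\lambda)$ from the Case~1 / Case~2 estimates on $J_\lambda$, observe that $J_\lambda\equiv\mathcal I_\lambda$ on a ball around $u$ (so derivatives agree), and reduce the local $(PS)_c$ condition to the compactness machinery already developed. Two small but genuine points of comparison. First, the paper's step ``$G_\lambda(\|u\|)\le J_\lambda(u)<0$, therefore $\|u\|<T_0(\lambda)$'' tacitly ignores the possibility $T_1(\lambda)<\|u\|<1$ when $T_1(\lambda)<1$, where $G_\lambda$ is also negative; your explicit check of $H_\lambda\ge0$ on $[\min\{T_1(\lambda),1\},1]$ (and the remark that this may force one further shrinking of $\lambda_{**}$, justified because $T_1(\lambda)$ stays bounded away from $0$ as $\lambda\to0^+$) is the correct way to close that gap. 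Second, for the $(PS)_c$ condition the paper invokes coercivity of $J_\lambda$ to get boundedness and then refers to Lemma~\ref{lem4.2} for the Trudinger--Moser threshold, whereas you obtain both at once from the a priori bound $\|u_n\|<T_0(\lambda)<1<(\alpha_{N,s}/\alpha_0)^{(N-s)/N}$; this is slightly more direct and makes transparent exactly why Lemma~\ref{lem2.2} is applicable. The remainder of your compactness argument mirrors Lemma~\ref{lem3.6} and is sound.
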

\begin{proof}
Since $\lambda\in (0,\lambda_{**})$, $J_\lambda(u)\geq 0$ for all $\|u\|\geq 1$. Thus, if $J_\lambda(u)<0$ we have $\|u\|<1$
 and consequently $G_\lambda(\|u\|)\leq J_\lambda(u)<0$. Therefore $\|u\|<T_0(\lambda)$ and $J_\lambda(u)=\mathcal{I}_\lambda(u)$. Moreover, $J_\lambda(v)=\mathcal{I}_\lambda(v)$ for all $v$ satisfying $\|v-u\|<T_0(\lambda)-\|u\|$.
 Let $\{u_n\}_n$ be a sequence such that
 $J_\lambda(u_n)\rightarrow c<0$ and $J_\lambda^\prime(u_n)\rightarrow 0$. Then for $n$ sufficiently large, we have
 $\mathcal{I}_\lambda(u_n)=J_\lambda(u_n)\rightarrow c<0$ and $\mathcal{I}^\prime_\lambda(u_n)=J_\lambda^\prime (u_n)\rightarrow 0$.
 Note that $J_\lambda$ is coercive in $W_V^{s,N/s}(\mathbb{R}^N)$. Thus, $\{u_n\}_n$ is bounded in $W_V^{s,N/s}(\mathbb{R}^N)$. By using a similar discussion as Lemma \ref{lem4.2}, up to a subsequence, $\{u_n\}_n$  is strongly convergent in  $W_V^{s,N/s}(\mathbb{R}^N)$.
\end{proof}

\begin{remark}\label{rem5.1}
Set $K_c=\{u\in W_V^{s,N/s}(\mathbb{R}^N): J_\lambda^\prime(u)=0,
J_\lambda(u)=c\}$. If $\lambda\in (0,\lambda_{**})$ and $c<0$, it follows
from Lemma \ref{lem5.2} that $K_c$ is compact.
\end{remark}
Next, we will construct an appropriate mini-max sequence of negative critical
values for the functional $J_\lambda$. For $\epsilon>0$, we define
\begin{align*}
J_\lambda^{-\epsilon}=\{u\in W_V^{s,N/s}(\mathbb{R}^N):J_\lambda(u)\leq
-\epsilon\}.
\end{align*}
\begin{lemma}\label{lem5.3}
For any fixed $k\in \mathbb{N}$ there exists $\epsilon_k>0$ such that
$$\gamma(J_\lambda^{-\epsilon_k})\geq k.$$
\end{lemma}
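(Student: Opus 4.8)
Lemma \ref{lem5.3} — here is the plan.

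\medskip

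The plan is to use the standard device for producing sets of large genus: exhibit, for each fixed $k$, a $k$-dimensional subspace $X_k\subset W_V^{s,N/s}(\mathbb{R}^N)$ on which the functional $J_\lambda$ (equivalently, $\mathcal I_\lambda$ near the origin) is strictly negative on a small sphere. First I would fix $k$ and choose any $k$-dimensional subspace $X_k$ of $W_V^{s,N/s}(\mathbb{R}^N)$ consisting of sufficiently regular functions — for instance, spanned by $k$ linearly independent functions in $C_0^\infty(\mathbb{R}^N)$ that can be chosen with $\int_{\mathbb{R}^N}h(x)|u|^p\,dx>0$ for every $u\in X_k\setminus\{0\}$ (this is where one uses that $h\geq0$, $h\not\equiv0$ on the support region; if $h$ could vanish identically the statement would be vacuous, so this is harmless). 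Since all norms on the finite-dimensional space $X_k$ are equivalent, there is a constant $c_k>0$ with
\begin{align*}
\int_{\mathbb{R}^N}h(x)|u|^p\,dx\geq c_k\|u\|^p\qquad\text{for all }u\in X_k.
\end{align*}

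\medskip

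Next I would estimate $J_\lambda$ on $X_k$ for small $\|u\|$. For $\|u\|\leq T_0(\lambda)$ we have $\Psi(\|u\|)=1$ and $J_\lambda(u)=\mathcal I_\lambda(u)$, so using $F(x,u)\geq0$ (from $(f_1)$–$(f_2)$), the bound $\mathscr M(t)\leq\mathscr M(1)t^\theta$ for $t\geq1$ but more relevantly $\mathscr M(\|u\|^{N/s})\leq \mathscr M(1)\|u\|^{N\theta/s}$ for $\|u\|\leq1$ — actually from $M(t)=a+b\theta t^{\theta-1}$ we directly get $\mathscr M(t)=at+bt^\theta$, hence $\frac{s}{N}\mathscr M(\|u\|^{N/s})=\frac{s}{N}(a\|u\|^{N/s}+b\|u\|^{N\theta/s})\leq \frac{s(a+b)}{N}\|u\|^{N/s}$ for $\|u\|\leq1$ — one obtains
\begin{align*}
J_\lambda(u)\leq\frac{s(a+b)}{N}\|u\|^{N/s}-\frac{\lambda c_k}{p}\|u\|^p.
\end{align*}
Since $p<N/s$, the function $t\mapsto \frac{s(a+b)}{N}t^{N/s}-\frac{\lambda c_k}{p}t^p$ is strictly negative for $t$ in a small interval $(0,R_k)$, and one can shrink $R_k$ below $T_0(\lambda)$ (using Lemma \ref{lem5.1} if necessary, or just noting $R_k$ may depend on $\lambda$). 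Fix $r_k\in(0,R_k)$ and set $S_{r_k}=\{u\in X_k:\|u\|=r_k\}$. Then there is $\epsilon_k>0$ with $J_\lambda(u)\leq-\epsilon_k$ for all $u\in S_{r_k}$, i.e. $S_{r_k}\subset J_\lambda^{-\epsilon_k}$.

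\medskip

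Finally I would invoke the genus properties from Proposition \ref{Prop5.1}. The sphere $S_{r_k}$ in the $k$-dimensional space $X_k$ has genus $\gamma(S_{r_k})=k$ by item $(5)$ (together with the fact that $J_\lambda^{-\epsilon_k}$ is symmetric, since $J_\lambda$ is even — $M(t)=a+b\theta t^{\theta-1}$, $F(x,\cdot)$ is built from the even-growth condition, and $|u|^p$ are all even). By the monotonicity property $(3)$, $\gamma(J_\lambda^{-\epsilon_k})\geq\gamma(S_{r_k})=k$, which is the claim. The only mild subtlety — and the main thing to be careful about — is that the threshold $\epsilon_k$ and radius $r_k$ depend on $k$ (and on $\lambda$), so one must fix $k$ first and then choose them; there is no uniformity claim, and none is needed for the subsequent minimax construction. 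I would also note explicitly that $J_\lambda^{-\epsilon_k}\in\Xi$: it is closed because $J_\lambda$ is continuous, it is symmetric by evenness, and it does not contain $0$ because $J_\lambda(0)=0>-\epsilon_k$.
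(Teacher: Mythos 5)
Your proof follows essentially the same route as the paper: fix a $k$-dimensional subspace, use equivalence of norms on finite-dimensional spaces to bound $\int_{\mathbb{R}^N}h|u|^p\,dx$ from below, use $F\geq0$ and the explicit $\mathscr M$ to show $J_\lambda<0$ on a small sphere in that subspace, and then invoke the genus properties of Proposition \ref{Prop5.1}. In fact you quietly fix a small slip in the paper's estimate: for $\|u\|=r_k\leq1$ and $a>0$ the correct upper bound is $\frac{s}{N}\mathscr M(r_k^{N/s})\leq\frac{s(a+b)}{N}r_k^{N/s}$ (since $r_k^{N\theta/s}\leq r_k^{N/s}$ when $\theta>1$), whereas the paper writes $\frac{s(a+b)}{N}r_k^{N\theta/s}$, which requires $a=0$; either way the conclusion survives because $p<N/s<N\theta/s$. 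One shared caveat worth noting: your asserted justification that $J_\lambda$ is even is not actually correct as stated, because $f(x,t)\equiv0$ for $t\leq0$ forces $F(x,t)=0$ for $t\leq0$, so $F(x,\cdot)$ is \emph{not} even and hence neither is $\mathcal I_\lambda$ or $J_\lambda$; the paper silently glosses over the same point (presumably one should extend $f$ oddly in $t$ for the genus machinery in Section \ref{sec5}), so this is a gap inherited from the source rather than one you introduced.
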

\begin{proof}
Denote by $E_k$ an $k$-dimensional subspace of $W_V^{s,N/s}(\mathbb{R}^N)$.
For any $u\in E_k$, $u\neq 0$, set $u=r_kv$ with $v\in E_k$, $\|v\|=1$ and
$r_k=\|u\|$. By the assumption on $h$, we know that $
(\int_{\mathbb{R}^N}h(x)|v|^pdx)^{1/p}$ is a norm of $E_k$.
Since all norms are equivalent in a finite-dimensional
Banach space, for each $v\in E_k$ with $\|v\|=1$, there
exists $C_k>0$ such that
\begin{align*}
\int_{\mathbb{R}^N}h(x)|v|^pdx\geq C_k.
\end{align*}
Thus, for $r_k\in(0,T_0(\lambda))$, we have
\begin{align*}
J_\lambda(u)=\mathcal{I}_\lambda(u)&=\frac{s}{N}\mathscr{M}(\|u\|^{N/s})-\frac{\lambda}{p}\int_{\mathbb{R}^3}h(x)|u|^{p}dx-\int_{\mathbb{R}^N}
F(x,u)dx\\
&\leq \frac{s(a+b)}{N}r_k^{N\theta/s}-
\frac{\lambda}{p}C_kr_k^p
.
\end{align*}
Since $p<N\theta/s$, we can choose $r_k\in (0,T_0(\lambda))$ so small that
$J_\lambda(u)\leq -\epsilon_k<0$. Set $S_{r_k}=\{u\in W_V^{s,N/s}(\mathbb{R}^N):\|u\|=r_k\}$. Then
$S_{r_k}\bigcap E_k\subset J_\lambda^{-\epsilon_k}$. Hence, it follows from Proposition \ref{Prop5.1} that
$\gamma(J_\lambda^{-\epsilon_k})\geq \gamma(S_{r_k}\bigcap E_k)=k$.
\end{proof}

Set $\Xi_k=\{A\in \Xi:\gamma(A)\geq k\} $ and let
\begin{align}\label{CV}
c_k:=\inf_{A\in\Xi_k}\sup_{u\in A}J_\lambda(u).
\end{align}
Then,
\begin{align*}
-\infty<c_k\leq -\epsilon_k<0, \ \forall k\in\mathbb{N},
\end{align*}
since $J_\lambda^{-\epsilon_k}\in \Xi_k$ and $J_\lambda$ is bounded
from below.
By \eqref{CV}, we have $c_k<0$.
Since $J_\lambda$ satisfies
$(PS)_c$ condition by Lemma \ref{lem5.2}, it follows by a standard
argument that
all $c_k$ are critical values
of $J_\lambda$.

\begin{lemma}\label{lem5.4}
Let $\lambda\in (0,\lambda_{**})$.  If $c=c_k=c_{k+1}=\cdots=c_{k+m}$ for some $m\in\mathbb{N}$, then
$\gamma(K_c)\geq m+1$.
\end{lemma}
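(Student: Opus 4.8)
The plan is to invoke the classical Krasnoselskii genus multiplicity principle: when several consecutive minimax levels $c_k, c_{k+1}, \dots, c_{k+m}$ coincide at a common value $c<0$, the critical set $K_c$ at that level must be ``large'' in the sense of genus, specifically $\gamma(K_c)\ge m+1$. By Remark \ref{rem5.1}, $K_c$ is compact for $\lambda\in(0,\lambda_{**})$ and $c<0$, so its genus is finite and, by Proposition \ref{Prop5.1}$(6)$, there exists $\delta>0$ such that the closed $\delta$-neighbourhood $N_\delta(K_c)$ belongs to $\Xi$ with $\gamma(N_\delta(K_c))=\gamma(K_c)$. First I would argue by contradiction: suppose $\gamma(K_c)\le m$.

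The key step is then a deformation argument. Since $J_\lambda$ is even, of class $C^1$, bounded below, and satisfies the $(PS)_c$ condition at the level $c<0$ (Lemma \ref{lem5.2}), the standard equivariant deformation lemma applies: for the given $\delta$-neighbourhood $U=N_\delta(K_c)$ there exist $\varepsilon>0$ and an odd homeomorphism $\eta:W_V^{s,N/s}(\mathbb{R}^N)\to W_V^{s,N/s}(\mathbb{R}^N)$ such that
\begin{align*}
\eta\bigl(J_\lambda^{c+\varepsilon}\setminus U\bigr)\subset J_\lambda^{c-\varepsilon},
\end{align*}
where $J_\lambda^{a}=\{u:J_\lambda(u)\le a\}$. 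One should check here that $\varepsilon$ can be taken small enough that $c+\varepsilon<0$, so that the $(PS)$ condition of Lemma \ref{lem5.2} is genuinely available on the relevant sublevel sets. Now, by definition of $c=c_{k+m}$, there exists a set $A\in\Xi_{k+m}$ with $\sup_{u\in A}J_\lambda(u)\le c+\varepsilon$, i.e. $A\subset J_\lambda^{c+\varepsilon}$. Using the subadditivity and monotonicity of the genus (Proposition \ref{Prop5.1}$(3)$--$(4)$), together with $\gamma(\overline{U})=\gamma(K_c)\le m$, we get
\begin{align*}
\gamma\bigl(\overline{A\setminus U}\bigr)\ge \gamma(A)-\gamma(\overline{U})\ge (k+m)-m=k.
\end{align*}
Moreover $\overline{A\setminus U}\in\Xi$, and since it is closed and symmetric and avoids $U\supset K_c$, the odd map $\eta$ sends it into $J_\lambda^{c-\varepsilon}$; by Proposition \ref{Prop5.1}$(1)$ (genus decreases under odd maps, here a homeomorphism so it is preserved), $\gamma\bigl(\eta(\overline{A\setminus U})\bigr)\ge k$, hence $\eta(\overline{A\setminus U})\in\Xi_k$. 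But then $c_k\le\sup_{\eta(\overline{A\setminus U})}J_\lambda\le c-\varepsilon<c=c_k$, a contradiction.

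I expect the main obstacle to be purely technical bookkeeping rather than conceptual: namely verifying that all sets appearing ($\overline{A\setminus U}$, $N_\delta(K_c)$, their images under $\eta$) genuinely lie in the admissible class $\Xi$ of closed symmetric subsets of $W_V^{s,N/s}(\mathbb{R}^N)\setminus\{0\}$ — in particular that $0\notin\overline{A\setminus U}$, which holds because $A\in\Xi$ already excludes $0$ and taking closure of $A\setminus U$ within the symmetric set $A$ keeps us away from the origin — and carefully citing the equivariant deformation lemma in a form valid on a Banach space under only a local $(PS)_c$ condition at a single negative level. Once the deformation $\eta$ is in hand with the stated properties, the genus inequalities are immediate from Proposition \ref{Prop5.1}. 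This is the classical argument (see \cite{Rabin}), adapted to the truncated functional $J_\lambda$, and combined with Lemma \ref{lem5.3} it yields infinitely many critical values $c_k\to 0^-$, completing the proof of Theorem \ref{th3}.
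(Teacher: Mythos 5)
Your proposal is correct and follows essentially the same route as the paper: contradiction with $\gamma(K_c)\le m$, Proposition~\ref{Prop5.1}(6) to pass to $N_\delta(K_c)$, the odd deformation $\eta$ from \cite[Theorem A.4]{Rabin} with $0<\epsilon<-c$, a nearly optimal set $A\in\Xi_{k+m}$, and the genus subadditivity/monotonicity chain to force $\eta(\overline{A\setminus N_\delta(K_c)})\in\Xi_k$ sitting in $J_\lambda^{c-\epsilon}$. Your version is in fact a bit more explicit than the paper's at the final step (spelling out $c_k\le c-\epsilon<c_k$) and in flagging that $\epsilon$ must be chosen with $c+\epsilon<0$ so Lemma~\ref{lem5.2}'s local $(PS)$ condition applies, but these are the same argument.
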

\begin{proof}
Arguing by contradiction, we assume that $\gamma(K_c)\leq m$.
By Remark \ref{rem5.1}, we know that $K_{c}$ is compact
and $K_{c}\in\Xi$.
 It follows
from Proposition \ref{Prop5.1} that
 there exists
$\delta>0$ such that
\begin{align*}
\gamma(K_{c})=\gamma(N_\delta(K_{c_\infty}))\leq m.
\end{align*}
From the deformation lemma (see \cite[Theorem A.4]{Rabin}),
there exist $0<\epsilon<-c$, and an odd homeomorphism
$\eta:W_V^{s,N/s}(\mathbb{R}^N)\rightarrow W_V^{s,N/s}(\mathbb{R}^N)$
such that
\begin{align}\label{eq5.8}
\eta(J_\lambda^{c+\epsilon}\setminus
N_\delta(K_{c})
\subset J_\lambda^{c-\epsilon}.
\end{align}
On the other hand, by the definition of $c=c_{k+m}$,
there exists $A\in \Xi_{k+m}$ such that
 $\sup_{u\in A}J_\lambda(u)<c+\epsilon$, which means that
 \begin{align*}
 A\subset J_\lambda^{c+\epsilon}.
 \end{align*}
 It follows from Proposition \ref{Prop5.1} that
 \begin{align*}
 \gamma(\overline{A\setminus N_\delta(K_{c})})\geq \gamma (A)
 -\gamma(N_\delta(K_{c}))\geq k
 \end{align*}
 and
 \begin{align*}
 \gamma(\overline{\eta(A\setminus N_\delta(K_{c}))})\geq k.
 \end{align*}
Thus,
\begin{align*}
\overline{\eta(A\setminus N_\delta(K_{c}))}\in \Xi_k,
\end{align*}
which contradicts \eqref{eq5.8}. This completes the proof.
\end{proof}
\begin{proof}[\bf Proof of Theorem \ref{th3}]
Let $\lambda\in(0,\lambda_{**})$.
If $-\infty<c_1<c_2<\cdots<c_k<\cdots<0$, since $c_k$ are critical values of
$J_\lambda$,  we obtain infinitely many critical points of $J_\lambda$.
From Lemma \ref{lem5.2}, $\mathcal{I}_\lambda=J_\lambda$ if $J_\lambda<0$. Hence system \eqref{eq1}
 has infinitely many solutions.

 If there exist $c_k=c_{k+m}$, then
 $c=c_k=c_{k+1}=\cdots=c_{k+m}$. By Lemma \ref{lem5.4}
 ,  we have $\gamma(K_c)\geq m+1\geq 2$.  From (2) of Proposition \ref{Prop5.1},
$K_c$ has infinitely many points. Thus, system \eqref{eq1} has infinitely many solutions.
The proof is now complete.
\end{proof}

It is natural to consider the existence of infinitely many solutions for problem \ref{eq1} in the case $1<p<N \theta/s$. For this, we replace $M(t)=a+b\theta t^{\theta-1}$ with $M(t)=b^{\theta-1}$. Hence, by employing the same approach as Theorem \ref{th3},  we can get the following result.

\begin{corollary}\label{col}
Assume that $V$ satisfies $(V_1)$--$(V_2)$, and $f$ satisfies $(f_1)$--$(f_3)$. If $1<p<N\theta/s$ and $0\leq h\in L^{\frac{N\theta}{N\theta-sp}}(\mathbb{R}^N)$,
then there exists
$\lambda_{**}\in(0, \lambda_*]$ such that for all $0<\lambda<\lambda_{**}$, problem \eqref{eq1}
has infinitely many solutions in $ W_V^{s,N/s}(\mathbb{R}^N)$.
\end{corollary}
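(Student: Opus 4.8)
The strategy is to rerun the entire machinery of Section~\ref{sec5} (the truncation argument plus genus theory) with the single change that the Kirchhoff function $M(t)=a+b\theta t^{\theta-1}$ is replaced by the constant $M(t)=b^{\theta-1}$, so that $\mathscr{M}(t)=b^{\theta-1}t$ and the leading term of the energy becomes $\tfrac{s}{N}b^{\theta-1}\|u\|^{N/s}$ rather than behaving like $\|u\|^{N\theta/s}$. Because the homogeneity of the principal part drops from $N\theta/s$ to $N/s$, the natural comparison exponent for the weight term $\lambda\int h|u|^p$ is now $N/s$, which is exactly why the hypothesis on $h$ changes to $h\in L^{\frac{N\theta}{N\theta-sp}}(\mathbb R^N)$ is \emph{weakened} — wait, more precisely, one wants $1<p<N/s$ in the genus argument, but the corollary claims $1<p<N\theta/s$; the point is that since $\mathscr M$ is linear, the relevant coercivity inequality in Case~2 now reads $J_\lambda(u)\ge \frac{s}{N}b^{\theta-1}\|u\|^{N/s}-\frac{\lambda}{p}S^p\|h\|\,\|u\|^p$, and one still needs $p<N/s$ for this to be bounded below; however the subcritical term in $(f_1)$ is controlled by $b_1 t^{N\theta/s-1}$, so the lower bound in Case~1 involves $t^{N\theta/s}$, and the genus estimate in Lemma~\ref{lem5.3} only requires $p<N\theta/s$. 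So the constraint $1<p<N\theta/s$ suffices provided one checks that the Case~2 coercivity still goes through; here one uses that with $M$ constant the term $\frac{s}{N}\mathscr M(\|u\|^{N/s})=\frac{s}{N}b^{\theta-1}\|u\|^{N/s}$ dominates $\lambda\|h\|_{L^{N\theta/(N\theta-sp)}}S^p\|u\|^p$ once $\|u\|\ge 1$ and $\lambda$ is small, because $N/s>p$ still holds — no, that is false in general when $p\in[N/s,N\theta/s)$.

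**Reconciling the exponents.** Let me restate the genuine plan. With $M\equiv b^{\theta-1}$ the functional is $\mathcal I_\lambda(u)=\frac{s}{N}b^{\theta-1}\|u\|^{N/s}-\int F(x,u)-\frac{\lambda}{p}\int h|u|^p$, and by $(f_1)$, $(f_3)$ we get, for $\|u\|\le 1$, a bound $\mathcal I_\lambda(u)\ge G_\lambda(\|u\|)$ with $G_\lambda(t)=c_1 t^{N/s}-c_2 t^{q}\cdot(\text{exp factor})-\frac{\lambda}{p}S^p\|h\|_{L^{N\theta/(N\theta-sp)}}t^{p}$ where now $q>N\theta/s$. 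For this to have a positive maximum for small $\lambda$ one needs $p<N/s$; but the corollary allows $p<N\theta/s$. The resolution must be that in the regime $p\ge N/s$ the term $\int F$ itself, which contains the subcritical piece $\sim t^{N\theta/s}$ with a sign that is nonnegative by $(f_2)$, does \emph{not} help lower-bound $\mathcal I_\lambda$ (it is subtracted). Hence the only way $p<N\theta/s$ can be admissible is that one chooses $h\in L^{\frac{N\theta}{N\theta-sp}}$ so that, by Hölder with the continuous embedding $W_V^{s,N/s}\hookrightarrow L^{N\theta/s}$, the weight integral obeys $\lambda\int h|u|^p\le \lambda S^p\|h\|_{L^{N\theta/(N\theta-sp)}}\|u\|^p$ and then one compares this $p$-power against the full $t^{N\theta/s}$ coming from $\mathscr M$ only when $M$ grows — but with constant $M$ that $N\theta/s$-power is absent. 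Therefore the honest plan is: carry the truncation out exactly as in Section~\ref{sec5}, but in Case~1 keep the \emph{subtracted} $-\Psi(\|u\|)\int F$ term and, crucially, note that on the truncation region $\|u\|\le T_0(\lambda)<1$ we need only the estimate $\mathcal I_\lambda(u)\ge \frac{s}{N}b^{\theta-1}\|u\|^{N/s}-C\|u\|^q\Phi_{\alpha}-\frac{\lambda}{p}S^p\|h\|\|u\|^p$, and since $p$ could be as large as just below $N\theta/s$ while $N/s$ could be smaller, one must instead bound the $F$-term using $(f_3)$ to absorb part of $\|u\|^{N/s}$ and accept that the competition is genuinely between $t^{N/s}$ and $t^p$ when $p<N/s$. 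I now believe the corollary as stated is meant for the sub-regime where this works, i.e. the proof is verbatim that of Theorem~\ref{th3} with $N\theta/s$ replaced by $N/s$ throughout the estimates, $a+b$ replaced by $b^{\theta-1}$, and the exponent on $h$ adjusted so that Hölder lands in $L^{N\theta/s}$.

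**Concrete steps.** First, I would record that with $M\equiv b^{\theta-1}$ conditions $(M_1)$, $(M_2)$ hold (with $\theta=1$ there) so Lemmas~\ref{lem3.1}, \ref{lem3.2} and their analogues apply, and $\mathscr M(t)=b^{\theta-1}t$. Second, redo Case~1: using $(f_1)$ and $(f_3)$, obtain $\mathcal I_\lambda(u)\ge G_\lambda(\|u\|)$ for $\|u\|\le 1$ where $G_\lambda(t)=\frac{\tau s b^{\theta-1}}{\lambda_1 N}t^{N/s}-C(N,s,\alpha)t^q-\frac{\lambda}{p}S^p\|h\|_{L^{N\theta/(N\theta-sp)}}t^p$ with $q>N\theta/s$; since $p<N\theta/s$ and — here is where I need $p<N/s$ or a more careful split — choose $\lambda_{**}$ so small that $G_\lambda$ attains a positive maximum, with roots $0<T_0(\lambda)<T_1(\lambda)$. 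Third, prove $T_0(\lambda)\to 0$ as $\lambda\to 0^+$ exactly as in Lemma~\ref{lem5.1}, using $G_\lambda(T_0)=0$ and $G_\lambda'(T_0)>0$ and $q>N/s$ (or $q>N\theta/s$). Fourth, build the truncation $J_\lambda(u)=\frac{s}{N}b^{\theta-1}\|u\|^{N/s}-\frac{\lambda}{p}\int h|u|^p-\Psi(\|u\|)\int F(x,u)\,dx$ with the same cutoff $\Psi$, and verify Case~2: for $\|u\|>1$, $\Psi(\|u\|)=0$, so $J_\lambda(u)\ge \frac{s}{N}b^{\theta-1}\|u\|^{N/s}-\frac{\lambda}{p}S^p\|h\|\|u\|^p\ge 0$ once $\lambda<\lambda_0:=\frac{sp\,b^{\theta-1}}{N S^p\|h\|_{L^{N\theta/(N\theta-sp)}}}$ — this uses $N/s>p$ in the \emph{non-degenerate} part; if $p\in[N/s,N\theta/s)$ one cannot conclude $J_\lambda\ge 0$ for all large $\|u\|$ and the truncation argument breaks, so I expect the corollary implicitly needs, or the authors' proof secretly uses, additional structure. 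Fifth, establish the local $(PS)_c$ condition for $c<0$ (Lemma~\ref{lem5.2} analogue) via coercivity of $J_\lambda$ plus the compact embedding Theorem~\ref{th2.1} and the exponential estimates Lemmas~\ref{lem2.2}, \ref{lem2.3}. Sixth, run the genus argument: for each $k$, pick a $k$-dimensional $E_k$, use that $(\int h|v|^p)^{1/p}$ is a norm on $E_k$ so $\int h|v|^p\ge C_k$, and since $p<N\theta/s$ choose $r_k\in(0,T_0(\lambda))$ tiny so $J_\lambda|_{S_{r_k}\cap E_k}\le -\epsilon_k<0$, giving $\gamma(J_\lambda^{-\epsilon_k})\ge k$. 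Define $c_k=\inf_{A\in\Xi_k}\sup_A J_\lambda$, show $-\infty<c_k\le-\epsilon_k<0$, conclude the $c_k$ are critical values, and if some coincide apply the genus-drop deformation argument (Lemma~\ref{lem5.4} analogue) to get $\gamma(K_c)\ge 2$, hence infinitely many critical points. Finally, invoke $\mathcal I_\lambda=J_\lambda$ on $\{J_\lambda<0\}$ (since there $\|u\|<T_0(\lambda)$) to transfer these to infinitely many solutions of \eqref{eq1}. The main obstacle is exactly the Case~2 coercivity: reconciling the admissible range $1<p<N\theta/s$ with the need that $\frac{s}{N}b^{\theta-1}\|u\|^{N/s}$ dominate $\lambda\|u\|^p$ for large $\|u\|$, which forces $p<N/s$ unless one exploits the truncation more cleverly — so I would first double-check whether the corollary should read $1<p<N/s$, and if $N\theta/s$ is genuinely intended, locate the extra ingredient (perhaps redefining $\Psi$ to vanish beyond a $\lambda$-dependent threshold chosen so that $J_\lambda\ge 0$ outside a fixed ball) that makes it work.
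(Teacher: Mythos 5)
You correctly reproduce the truncation-plus-genus framework of Section~\ref{sec5} — which is exactly what the paper invokes, since the paper's entire ``proof'' of Corollary~\ref{col} is the one-line instruction to replace $M(t)=a+b\theta t^{\theta-1}$ by $M(t)=b^{\theta-1}$ and rerun Theorem~\ref{th3}. You have also correctly isolated the genuine obstruction: with a literally constant $M\equiv b^{\theta-1}$ one has $\mathscr{M}(t)=b^{\theta-1}t$, so the Case~2 coercivity reads $J_\lambda(u)\ge \tfrac{s}{N}b^{\theta-1}\|u\|^{N/s}-\tfrac{\lambda}{p}S^p\|h\|\,\|u\|^p$ and the genus upper bound from Lemma~\ref{lem5.3} on a $k$-dimensional subspace reads $J_\lambda(u)\le \tfrac{s}{N}b^{\theta-1}r_k^{N/s}-\tfrac{\lambda}{p}C_k r_k^p$; both comparisons close only for $p<N/s$, not for $p<N\theta/s$ with $\theta>1$. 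Your diagnosis is right, but you leave the contradiction unresolved and tentatively suggest tightening the range to $1<p<N/s$, which would simply reproduce Theorem~\ref{th3} and defeat the purpose of stating a corollary.

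The missing idea is to identify what $M$ must actually be for the claimed range $1<p<N\theta/s$ to work. For the exponent $N\theta/s$ to appear uniformly in the Case~1 lower bound, the Case~2 coercivity and the genus upper bound, one needs $\mathscr{M}(\|u\|^{N/s})$ to scale precisely as $\|u\|^{N\theta/s}$, i.e. $\mathscr{M}(t)\sim t^\theta$, which forces $M(t)=b\theta t^{\theta-1}$ with $a=0$ — the pure degenerate Kirchhoff function. The expression ``$M(t)=b^{\theta-1}$'' in the paper is evidently a misprint for this. With $\mathscr{M}(t)=bt^\theta$ the leading energy is $\tfrac{sb}{N}\|u\|^{N\theta/s}$ on both $\{\|u\|\le 1\}$ and $\{\|u\|>1\}$, the H\"older pairing of $h\in L^{N\theta/(N\theta-sp)}(\mathbb R^N)$ against $|u|^p$ lands in $L^{N\theta/s}(\mathbb R^N)$ via the continuous embedding, and the three estimates $G_\lambda(t)=c_1t^{N\theta/s}-Ct^q-c_2\lambda t^p$, $\widetilde g(t)=\tfrac{sb}{N}t^{N\theta/s}-\tfrac{\lambda}{p}S^p\|h\| t^p$, and $J_\lambda(u)\le\tfrac{sb}{N}r_k^{N\theta/s}-\tfrac{\lambda}{p}C_kr_k^p$ all close precisely under $1<p<N\theta/s<q$. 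Note also why Theorem~\ref{th3} itself is restricted to $p<N/s$: when $a>0$ the term $\tfrac{sa}{N}\|u\|^{N/s}$ in $\mathscr M(\|u\|^{N/s})$ competes with the weight term and forces the tighter exponent; dropping $a$ relaxes the constraint to $p<N\theta/s$. Once this correction is made, your Steps~1--6 go through verbatim with $N\theta/s$ in place of $N/s$ and $b$ in place of $a+b$; there is no need for the alternative $\lambda$-dependent cutoff you speculate about at the end.
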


\section{Extensions to a nonlocal integro--differential operator}\label{sec6}
In this section, we show that Theorems~\ref{th1}--\ref{th2} remain valid when $(-\Delta )_{N/s}^s$ in \eqref{eq1} is replaced by a nonlocal integro--differential operator $\mathcal{L}_{\mathcal{K}}$, defined by
\begin{align*}
\mathcal{L}_{\mathcal{K}}(\varphi)=2\lim_{\varepsilon\rightarrow 0^+}\int_{\mathbb{R}^N\setminus B_\varepsilon(x)} |\varphi(x)-\varphi(y)|^{\frac{N}{s}-2}(\varphi(x)-\varphi(y))\mathcal{K}(x-y)dxdy,
\end{align*}
along any function $\varphi\in C_0^\infty(\mathbb{R}^N)$, where {\em the singular kernel $\mathcal{K}:\mathbb{R}^N\setminus\{0\}\rightarrow \mathbb R^+$ satisfies the following properties}:
\begin{itemize}
\item[$(k_1)$]
$m \mathcal{K}\in L^1(\mathbb{R}^N)$, {\em where} $m(x)=\min\{1,|x|^{N/s}\}$;
\item[$(k_2)$] {\em there exists $\mathcal{K}_0>0$ such that $\mathcal{K}(x)\geq \mathcal{K}_0|x|^{-2N}$ for all} $x\in\mathbb{R}^N\setminus\{0\}$.
\end{itemize}
Obviously, $\mathcal{L}_{\mathcal{K}}$ reduces to the fractional $N/s$--Laplacian $(-\Delta)^s_{N/s}$ when $\mathcal{K}(x)=|x|^{-2N}$.

Let us denote by $W_{V,\mathcal{K}}^{s,N/s}(\mathbb{R}^N)$ the completion of $C_0^\infty(\mathbb{R}^N)$ with respect to the norm
\begin{align*}
\|u\|_{V,\mathcal{K}}=\big([u]_{s,\mathcal{K}}^{N/s}
+\|u\|_{N/s,V}^{N/s}\big)^{s/N},\quad
[u]_{s,\mathcal{K}}=\left(\iint_{\mathbb{R}^{2N}}|u(x)-u(y)|^{N/s}{\mathcal{K}(x-y)}dxdy\right)^{s/N},
\end{align*}
here we apply $(k_1)$. Clearly, the embedding $W_{V,\mathcal{K}}^{s,N/s}(\mathbb{R}^N)\hookrightarrow W_{V}^{s,N/s}(\mathbb{R}^N)$ is continuous, being
\begin{align*}
[u]_{s,N/s}\leq\mathcal{ K}_0^{-1/p}[u]_{s, {\mathcal{K}}}\quad\mbox{for all }u\in W_{V,\mathcal{K}}^{s,N/s}(\mathbb{R}^N),
\end{align*}
by $(k_2)$. Hence Theorem~\ref{th2.1} remains valid and  the embedding $W_{V,\mathcal{K}}^{s,N/s}(\mathbb{R}^N)\hookrightarrow\hookrightarrow L^{\nu}(\mathbb{R}^N)$ is compact for all $\nu\geq N/s$ by virtue of $(V_1)$ and $(V_2)$.

A (weak) {\em solution} of
\begin{equation}\label{p2}
M(\|u\|_{V,\mathcal{K}}^{N/s})[\mathcal{L}_{\mathcal{K}}(u)+V(x)|u|^{N/s-2}u]= f(x,u)+\lambda h(x)|u|^{p-2}u
\ \ {\rm in}\ \mathbb{R}^N
\end{equation}
is a function $u\in W_{V,\mathcal{K}}^{s,N/s}(\mathbb{R}^N)$ such that
$$\begin{gathered}
M(\|u\|_{V,\mathcal{K}}^{N/s})\left(\langle u,\varphi\rangle_{s,\mathcal{K}}+\int_{\mathbb{R}^N}V|u|^{\frac{N}{s}-2}u\varphi dx\right)
=\int_{\mathbb{R}^N}f(x,u)\varphi dx
+\lambda\int_{\mathbb{R}^N}h(x)|u|^{p-2}u\varphi dx,\\
\langle u,\varphi\rangle_{s,\mathcal{K}}=\iint_{\mathbb{R}^{2N}}|u(x)-u(y)|^{\frac{N}{s}-2}(u(x)-u(y))(\varphi(x)-\varphi(y))\mathcal{K}(x-y)dxdy,
\end{gathered}$$
for all $\varphi\in W_{V,\mathcal{K}}^{s,N/s}(\mathbb{R}^N)$.

Here we  point out that it is not restrictive to assume $\mathcal{K}$ to be even{,} as in \cite{AFP}, since the odd part of {$\mathcal{K}$} does not give contribution in the integral of the left hand side. Indeed, we can write $\mathcal{K}(x)=\mathcal{K}_e(x)+
\mathcal{K}_o(x)$ for all $x\in\mathbb{R}^N\setminus\{0\}$, where
\begin{align*}
\mathcal{K}_e(x)=\frac{\mathcal{K}(x)+\mathcal{K}(-x)}{2}\ \ {\rm and}\
\ \mathcal{K}_o(x)=\frac{\mathcal{K}(x)-\mathcal{K}(-x)}{2}.
\end{align*}
Then by a direct calculation, one can get that
\begin{align*}
\langle u,\varphi\rangle_{s,\mathcal{K}}
=\iint_{\mathbb{R}^{2N}}|u(x)-u(y)|^{p-2}(u(x)-u(y))
(\varphi(x)-\varphi(y))\mathcal{K}_e(x-y)dxdy,
\end{align*}
for all $u$ and $\varphi \in W_{V,\mathcal{K}}^{s,N/s}(\mathbb{R}^N)$. Thus, it is not restrictive to assume that $\mathcal{K}$ is even.

The nontrivial solutions of~\eqref{p2} correspond to the critical points of the energy functional $ I_{\lambda,\mathcal{K}}:W_{V,\mathcal{K}}^{s,N/s}(\mathbb{R}^N)\rightarrow\mathbb{R}$, defined by
\begin{align*}
 I_{\lambda,\mathcal{K}}(u)=\frac{s}{N}\mathscr{M}(\|u\|_{V,\mathcal{K}}^{N/s})
-\int_{\mathbb{R}^N}F(x,u)dx-
\frac{\lambda}{p}
\int_{\mathbb{R}^{N}}h|u^+|^{p}dxdy
\end{align*}
for all $u\in W_{V,\mathcal{K}}^{s,N/s}(\mathbb{R}^N)$. Now we are able to prove the following results for problem~\eqref{p2}
by employing the parallel approach as in Theorems \ref{th1}--\ref{th3}.

\begin{theorem}
Assume that $V$ satisfies $(V_1)$--$(V_2)$,  $f$ satisfies $(f_1)$--$(f_2)$
and $M$ fulfills $(M_1)$--$(M_2)$. If $0\leq h\in L^\infty(\mathbb{R}^N)$ and $N\theta/s<p<\infty$, then there exists
$\widetilde{\lambda}^*>0$ such that for all $\lambda>\widetilde{\lambda}^*$ problem \eqref{p2}
admits
a nontrivial nonnegative mountain pass solution
$u_{\lambda}\in W_{V,\mathcal{K}}^{s,N/s}(\mathbb{R}^N)$.
Moreover,
\begin{equation*}
\lim_{\lambda\rightarrow\infty}\|u_\lambda\|_{V,\mathcal{K}}=0.\end{equation*}
\end{theorem}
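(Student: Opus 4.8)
The plan is to repeat the proof of Theorem~\ref{th1} almost verbatim, replacing the Gagliardo seminorm $[\cdot]_{s,N/s}$ by the kernel seminorm $[\cdot]_{s,\mathcal{K}}$, the form $\langle\cdot,\cdot\rangle_{s,N/s}$ by $\langle\cdot,\cdot\rangle_{s,\mathcal{K}}$, and $I_\lambda$ by $I_{\lambda,\mathcal{K}}$. The observation that makes this transfer essentially automatic is the continuous inclusion $W_{V,\mathcal{K}}^{s,N/s}(\mathbb{R}^N)\hookrightarrow W_V^{s,N/s}(\mathbb{R}^N)$ recorded above, which comes from $(k_2)$ and yields a constant $c_{\mathcal{K}}>0$ (depending only on $\mathcal{K}_0$) with $\|u\|\leq c_{\mathcal{K}}\|u\|_{V,\mathcal{K}}$; combined with the persistence of Theorem~\ref{th2.1} for the space $W_{V,\mathcal{K}}^{s,N/s}(\mathbb{R}^N)$, it lets me invoke the fractional Trudinger--Moser estimates of Lemmas~\ref{lem2.2} and~\ref{lem2.3} directly on elements of $W_{V,\mathcal{K}}^{s,N/s}(\mathbb{R}^N)$ through their plain norm. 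First I would check, using $(f_1)$ and $(k_1)$, that $I_{\lambda,\mathcal{K}}\in C^1(W_{V,\mathcal{K}}^{s,N/s}(\mathbb{R}^N),\mathbb{R})$ and that its critical points coincide with the weak solutions of~\eqref{p2}, and then reproduce Lemma~\ref{lem3.1} verbatim (only $\mathcal{K}>0$ and $(M_1)$ enter) to conclude that every nontrivial critical point is nonnegative.

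Next I would establish the two geometric conditions of Theorem~\ref{MPth}. The analogue of Lemma~\ref{lem3.2} follows from $(f_1)$, $(f_3)$, $(M_2)$ (which gives $\mathscr{M}(t)\geq\mathscr{M}(1)t^\theta$ on $[0,1]$), the embeddings $W_{V,\mathcal{K}}^{s,N/s}(\mathbb{R}^N)\hookrightarrow L^q(\mathbb{R}^N)$ for $q\geq N/s$, and $h\in L^\infty(\mathbb{R}^N)$; since $N\theta/s<p$ one gets $\rho_\lambda\in(0,1)$ and $\kappa_\lambda>0$ with $I_{\lambda,\mathcal{K}}(u)\geq\kappa_\lambda$ whenever $\|u\|_{V,\mathcal{K}}=\rho_\lambda$. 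For the analogue of Lemma~\ref{lem3.3} I would fix a nonnegative $e\in C_0^\infty(\mathbb{R}^N)$ with compact support and $\|e\|_{V,\mathcal{K}}=1$, independent of $\lambda$, and use $(f_2)$ (which gives $F(x,t)\geq C_1t^\mu-C_2$ on the support of $e$, with $\mu>\theta N/s$) together with $(M_2)$ (which gives $\mathscr{M}(t)\leq\mathscr{M}(1)t^\theta$ for $t\geq1$) to obtain $I_{\lambda,\mathcal{K}}(te)\to-\infty$ as $t\to\infty$. Theorem~\ref{MPth} then supplies a $(PS)_{c_{\lambda,\mathcal{K}}}$ sequence, with $c_{\lambda,\mathcal{K}}>0$ the associated min--max level.

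The decisive step, mirroring Lemma~\ref{lem3.4}, is to show $c_{\lambda,\mathcal{K}}\to0$ as $\lambda\to\infty$: along the path $t\mapsto te$ the maximiser $t_\lambda$ satisfies $I_{\lambda,\mathcal{K}}'(t_\lambda e)=0$, and $(M_2)$, $(f_2)$ and $p>N\theta/s$ first force $\{t_\lambda\}$ to be bounded and then $t_\lambda\to0$, whence $0<c_{\lambda,\mathcal{K}}\leq I_{\lambda,\mathcal{K}}(t_\lambda e)\leq\frac1p\mathscr{M}(\|t_\lambda e\|_{V,\mathcal{K}}^{N/s})\to0$. With this, the analogues of Lemmas~\ref{lem3.5} and~\ref{lem3.6} go through: any $(PS)_{c_{\lambda,\mathcal{K}}}$ sequence is bounded by $(f_2)$ and $(M_1)$--$(M_2)$, and since $c_{\lambda,\mathcal{K}}\to0$ there is $\widetilde\Lambda_1>0$ so that for $\lambda>\widetilde\Lambda_1$ one has $\limsup_n\|u_n\|_{V,\mathcal{K}}$ small enough that $\limsup_n\|u_n\|<(\alpha_{N,s}/\alpha_0)^{(N-s)/N}$ via $\|u_n\|\leq c_{\mathcal{K}}\|u_n\|_{V,\mathcal{K}}$. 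Passing to a weak limit $u$, I would treat the linear part by noting that for each fixed $v$ the map $w\mapsto\langle v,w\rangle_{s,\mathcal{K}}+\int_{\mathbb{R}^N}V|v|^{N/s-2}vw\,dx$ is a bounded linear functional on $W_{V,\mathcal{K}}^{s,N/s}(\mathbb{R}^N)$ (H\"older plus $(k_1)$); the term $\int_{\mathbb{R}^N}h|u_n^+|^{p-2}u_n^+(u_n-u)\,dx$ and the polynomial term $\int_{\mathbb{R}^N}|u_n|^{N\theta/s-1}|u_n-u|\,dx$ vanish by the compact embedding; and the exponential term $\int_{\mathbb{R}^N}\Phi_{\alpha_0}(u_n)|u_n-u|\,dx\to0$ by Lemma~\ref{lem2.2}, applicable precisely because $\|u_n\|$ stays below the Trudinger--Moser threshold. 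Feeding these into $\langle I_{\lambda,\mathcal{K}}'(u_n),u_n-u\rangle=o(1)$ and applying the elementary inequality $(|\xi|^{N/s-2}\xi-|\eta|^{N/s-2}\eta)\cdot(\xi-\eta)\geq C_{s,N}|\xi-\eta|^{N/s}$ integrated against the positive kernel $\mathcal{K}(x-y)$ (positivity from $(k_2)$) yields $[u_n-u]_{s,\mathcal{K}}\to0$ and $\|u_n-u\|_{N/s,V}\to0$, i.e.\ strong convergence in $W_{V,\mathcal{K}}^{s,N/s}(\mathbb{R}^N)$. This produces a nontrivial nonnegative mountain pass solution $u_\lambda$ for all $\lambda>\widetilde\lambda^*:=\widetilde\Lambda_1$, and rerunning the estimate of Lemma~\ref{lem3.5} on $u_\lambda$ itself (using again $c_{\lambda,\mathcal{K}}\to0$) gives $\|u_\lambda\|_{V,\mathcal{K}}\to0$ as $\lambda\to\infty$.

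The main obstacle is the same as in Theorem~\ref{th1}: the failure of a global Palais--Smale condition caused by the exponential nonlinearity. It is circumvented exactly as there, by showing $c_{\lambda,\mathcal{K}}\to0$, which keeps the relevant $(PS)$ sequences strictly below the fractional Trudinger--Moser threshold $(\alpha_{N,s}/\alpha_0)^{(N-s)/N}$ for $\lambda$ large. Everything that concerns the kernel $\mathcal{K}$ is routine: $(k_1)$ is used only to make $\langle\cdot,\cdot\rangle_{s,\mathcal{K}}$ and $I_{\lambda,\mathcal{K}}$ well defined, while $(k_2)$ is used both for the continuous inclusion $W_{V,\mathcal{K}}^{s,N/s}(\mathbb{R}^N)\hookrightarrow W_V^{s,N/s}(\mathbb{R}^N)$ and for the positivity of $\mathcal{K}$ in the monotonicity argument, so no estimate genuinely changes---only the harmless constant $c_{\mathcal{K}}=c_{\mathcal{K}}(\mathcal{K}_0)$ propagates through the proof.
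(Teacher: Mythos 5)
Your proposal is correct and follows exactly the route the paper intends: the paper itself does not write out a separate proof but simply states that the results for~\eqref{p2} follow ``by employing the parallel approach as in Theorems~\ref{th1}--\ref{th3},'' which is precisely the verbatim transfer you carry out, with the continuous inclusion $W_{V,\mathcal{K}}^{s,N/s}(\mathbb{R}^N)\hookrightarrow W_V^{s,N/s}(\mathbb{R}^N)$ coming from $(k_2)$ as the mechanism that makes the Trudinger--Moser estimates of Lemmas~\ref{lem2.2}--\ref{lem2.3} and the threshold argument of Lemma~\ref{lem3.5} available. One small remark: the statement as printed lists only $(f_1)$--$(f_2)$, but your argument (like the proof of Theorem~\ref{th1} and its Lemma~\ref{lem3.2}) necessarily invokes $(f_3)$ for the mountain pass geometry near the origin; this appears to be a typo in the paper's Theorem, as the analogous Theorem~\ref{th1} lists $(f_1)$--$(f_3)$, so your use of $(f_3)$ is justified and consistent with the paper's intent.
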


\begin{theorem}
Assume that $V$ satisfies $(V_1)$--$(V_2)$, $f$ satisfies $(f_1)$--$(f_3)$,
and $M$ fulfills $(M_1)$--$(M_2)$.  If $1<p<N/s$ and $0\leq h\in L^{\frac{N}{N-sp}}(\mathbb{R}^N)$, then there exists $\widetilde{\lambda}_{*}>0$ such that for all
$\lambda\in (0, \widetilde{\lambda}_{*})$ problem~\eqref{p2}
admits a nontrivial nonnegative solution $u_{\lambda}\in W_{V,\mathcal{K}}^{s,N/s}(\mathbb{R}^N)$. Moreover,
\begin{equation*}
\lim_{\lambda\rightarrow0}\|u_\lambda\|_{V,\mathcal{K}}=0.
\end{equation*}
\end{theorem}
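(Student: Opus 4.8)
The plan is to mimic, almost verbatim, the variational argument of Section~\ref{sec4}, the only structural change being that the Gagliardo seminorm $[\,\cdot\,]_{s,N/s}$ and the norm $\|\cdot\|$ are replaced throughout by the kernel seminorm $[\,\cdot\,]_{s,\mathcal K}$ and the norm $\|\cdot\|_{V,\mathcal K}$. The key remark is that, by $(k_2)$, $[u]_{s,N/s}\le\mathcal K_0^{-s/N}[u]_{s,\mathcal K}$, so that $\|u\|\le c_0\,\|u\|_{V,\mathcal K}$ for some $c_0=c_0(\mathcal K_0)>0$; hence every statement obtained on $W_V^{s,N/s}(\mathbb R^N)$ by means of the norm $\|\cdot\|$ — in particular the continuous embeddings into $L^\nu(\mathbb R^N)$, the compact embedding $W_{V,\mathcal K}^{s,N/s}(\mathbb R^N)\hookrightarrow\hookrightarrow L^\nu(\mathbb R^N)$ for $\nu\ge N/s$ recalled just before~\eqref{p2}, and the fractional Trudinger--Moser estimates of Lemmas~\ref{lem2.2} and~\ref{lem2.3} — transfers to $W_{V,\mathcal K}^{s,N/s}(\mathbb R^N)$, up to harmless constants depending on $\mathcal K_0$. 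By $(f_1)$ and~\eqref{TM}, $I_{\lambda,\mathcal K}\in C^1(W_{V,\mathcal K}^{s,N/s}(\mathbb R^N),\mathbb R)$ and its critical points are precisely the weak solutions of~\eqref{p2}.

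First I would establish the local geometry of $I_{\lambda,\mathcal K}$, exactly as in Lemma~\ref{lem4.1}. Using $(f_1)$, $(f_3)$, the estimate $\mathscr M(t)\ge\mathscr M(1)t^\theta$ for $t\in[0,1]$ from $(M_2)$, the embedding $W_{V,\mathcal K}^{s,N/s}(\mathbb R^N)\hookrightarrow L^{N/s}(\mathbb R^N)$ and H\"older's inequality with $h\in L^{N/(N-sp)}(\mathbb R^N)$, one obtains, for $\|u\|_{V,\mathcal K}\le1$ small, a lower bound of the type $I_{\lambda,\mathcal K}(u)\ge\bigl(c_1\|u\|_{V,\mathcal K}^{\theta N/s-p}-c_2\|u\|_{V,\mathcal K}^{q-p}-c_3\lambda\bigr)\|u\|_{V,\mathcal K}^{p}$ with $1<p<\theta N/s<q$; this yields $\widetilde\rho_\lambda>0$ and $\delta_2>0$ such that $I_{\lambda,\mathcal K}(u)\ge\delta_2$ on $\|u\|_{V,\mathcal K}=\widetilde\rho_\lambda$ for $\lambda$ below a first threshold $\widetilde\Lambda_2$, with $\widetilde\rho_\lambda\to0$ as $\lambda\to0$. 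On the other hand, choosing $0\le v\in W_{V,\mathcal K}^{s,N/s}(\mathbb R^N)$ with $\|v\|_{V,\mathcal K}=1$ and $\int_{\mathbb R^N}h v^{p}\,dx>0$, and recalling that $F\ge0$ by $(f_1)$--$(f_2)$, the condition $p<N/s$ gives $I_{\lambda,\mathcal K}(tv)<0$ for all small $t>0$; thus $c:=\inf_{\overline B_{\widetilde\rho_\lambda}}I_{\lambda,\mathcal K}<0<\inf_{\partial B_{\widetilde\rho_\lambda}}I_{\lambda,\mathcal K}$, where $B_{\widetilde\rho_\lambda}=\{u:\|u\|_{V,\mathcal K}<\widetilde\rho_\lambda\}$.

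Next I would prove the analogue of Lemma~\ref{lem4.2}: there is $\widetilde\Lambda_3>0$ so that $I_{\lambda,\mathcal K}$ satisfies $(PS)_c$ for every $c\le0$ whenever $0<\lambda<\widetilde\Lambda_3$. For a $(PS)_c$ sequence $\{u_n\}$ with $\inf_n\|u_n\|_{V,\mathcal K}>0$ (the complementary case being immediate), $(M_2)$, $(f_2)$ and H\"older's inequality give boundedness, then $(M_1)$ together with $p<\theta N/s$ gives an explicit bound on $\limsup_n\|u_n\|_{V,\mathcal K}$ that is a positive power of $\lambda$ times a constant; shrinking $\lambda$ we force $\limsup_n\|u_n\|_{V,\mathcal K}$, and hence $\limsup_n\|u_n\|$, strictly below $(\alpha_{N,s}/\alpha_0)^{(N-s)/N}$. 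Passing to a weakly convergent subsequence and using the compact embedding into $L^\nu(\mathbb R^N)$ together with Lemma~\ref{lem2.2}, the nonlinear terms $\int_{\mathbb R^N}f(x,u_n)(u_n-u)\,dx$ and $\int_{\mathbb R^N}h|u_n^+|^{p-2}u_n^+(u_n-u)\,dx$ tend to zero; testing $I_{\lambda,\mathcal K}'(u_n)$ with $u_n-u$, subtracting the bounded linear functional $\langle L_{\mathcal K}(u),u_n-u\rangle=o(1)$, and applying the pointwise inequality $(|\xi|^{N/s-2}\xi-|\eta|^{N/s-2}\eta)\cdot(\xi-\eta)\ge C_{s,N}|\xi-\eta|^{N/s}$ with $\xi=u_n(x)-u_n(y)$, $\eta=u(x)-u(y)$, integrated against the nonnegative kernel $\mathcal K(x-y)$, one gets $[u_n-u]_{s,\mathcal K}\to0$, and combined with the potential part, $\|u_n-u\|_{V,\mathcal K}\to0$.

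Finally, I would run Ekeland's variational principle on $\overline B_{\widetilde\rho_\lambda}$, just as in the proof of Theorem~\ref{th2}, to obtain $\{u_n\}\subset B_{\widetilde\rho_\lambda}$ with $I_{\lambda,\mathcal K}(u_n)\to c<0$ and $I_{\lambda,\mathcal K}'(u_n)\to0$; shrinking $\widetilde\rho_\lambda$ (equivalently $\lambda$) so that $\widetilde\rho_\lambda$ lies below the threshold of the previous step and setting $\widetilde\lambda_*=\min\{\widetilde\Lambda_2,\widetilde\Lambda_3\}$ (further reduced if needed), the $(PS)_c$ property produces a subsequence converging to some $u_\lambda$ with $I_{\lambda,\mathcal K}(u_\lambda)=c<0$, so $u_\lambda\not\equiv0$. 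Nonnegativity of $u_\lambda$ follows from the analogue of Lemma~\ref{lem3.1}: test $I_{\lambda,\mathcal K}'(u_\lambda)$ with $-u_\lambda^-$ and use $(M_1)$, the sign convention $f(x,t)\equiv0$ for $t\le0$, $h\ge0$ and the same pointwise inequality integrated against $\mathcal K$, which is licit because $\mathcal K\ge0$. The asymptotics is immediate, since $\|u_\lambda\|_{V,\mathcal K}\le\widetilde\rho_\lambda\to0$ as $\lambda\to0$. I expect the only real obstacle to be, as in Section~\ref{sec4}, the control of the exponential nonlinearity in the Palais--Smale analysis: one must keep careful track of the constant $\mathcal K_0$ so that the bound on $\limsup_n\|u_n\|_{V,\mathcal K}$ remains below the level at which Lemmas~\ref{lem2.2}--\ref{lem2.3} — which are stated for the norm $\|\cdot\|$ rather than $\|\cdot\|_{V,\mathcal K}$ — can be invoked.
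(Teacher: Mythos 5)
Your proposal is correct and follows essentially the same route as the paper: Section~\ref{sec6} explicitly reduces the theorem to ``employing the parallel approach as in Theorems~\ref{th1}--\ref{th3}'' after noting that $(k_1)$--$(k_2)$ give the continuous embedding $W_{V,\mathcal K}^{s,N/s}(\mathbb R^N)\hookrightarrow W_V^{s,N/s}(\mathbb R^N)$, hence the compact embeddings and the Trudinger--Moser machinery of Section~\ref{sec2} transfer, and you have filled in precisely that transcription of Section~\ref{sec4}. You have also correctly isolated and resolved the one point where the kernel genuinely enters, namely that the $(PS)$ threshold $\limsup_n\|u_n\|_{V,\mathcal K}$ must be pushed below $\mathcal K_0^{s/N}(\alpha_{N,s}/\alpha_0)^{(N-s)/N}$ rather than just $(\alpha_{N,s}/\alpha_0)^{(N-s)/N}$, which is harmless since the bound in the analogue of Lemma~\ref{lem4.2} is a positive power of $\lambda$.
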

Moreover, we can prove the multiplicity of solutions.
\begin{theorem}
Assume that $V$ satisfies $(V_1)$--$(V_2)$, $f$ satisfies $(f_1)$--$(f_3)$,
and $M(t)=a+b\theta t^{\theta-1}$ for all $t\geq0$, with $a,b\geq0, a+b>0$ and $\theta>1$.  If $1<p<N\theta/s$ and $0\leq h\in L^{\frac{N\theta}{N\theta-sp}}(\mathbb{R}^N)$, then there exists $\widetilde{\lambda}_{**}\in (0,\widetilde{\lambda}_*]$ such that for all
$\lambda\in (0,\widetilde{\lambda}_{**})$ problem~\eqref{p2}
has infinitely many solutions in $W_{V,\mathcal{K}}^{s,N/s}(\mathbb{R}^N)$.
\end{theorem}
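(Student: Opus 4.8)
The plan is to transplant the truncation-plus-genus scheme of Section~\ref{sec5} to the operator $\mathcal{L}_{\mathcal{K}}$, the kernel hypotheses $(k_1)$--$(k_2)$ supplying exactly what is needed to transfer every functional-analytic ingredient. First I would record that, by $(k_1)$, the norm $\|\cdot\|_{V,\mathcal{K}}$ is well defined, while $(k_2)$ gives $[u]_{s,N/s}\le\mathcal{K}_0^{-s/N}[u]_{s,\mathcal{K}}\le\mathcal{K}_0^{-s/N}\|u\|_{V,\mathcal{K}}$, so that $W_{V,\mathcal{K}}^{s,N/s}(\mathbb{R}^N)\hookrightarrow W_{V}^{s,N/s}(\mathbb{R}^N)$ continuously; together with $(V_1)$--$(V_2)$ this makes $W_{V,\mathcal{K}}^{s,N/s}(\mathbb{R}^N)\hookrightarrow\hookrightarrow L^{\nu}(\mathbb{R}^N)$ compact for every $\nu\ge N/s$ via Theorem~\ref{th2.1}, and it makes Lemmas~\ref{lem2.2}--\ref{lem2.3} available after replacing $\|\cdot\|$ by $\mathcal{K}_0^{-s/N}\|\cdot\|_{V,\mathcal{K}}$, since those estimates only see the Gagliardo seminorm $[\cdot]_{s,N/s}$, which is dominated by $\|\cdot\|_{V,\mathcal{K}}$. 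Following Section~\ref{sec5} I would then work with the functional $\mathcal{I}_{\lambda,\mathcal{K}}$ obtained from $I_{\lambda,\mathcal{K}}$ by replacing $|u^+|^p$ by $|u|^p$; its critical points with negative energy are nonnegative solutions of \eqref{p2}.

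Next I would build the truncation. Since $M(t)=a+b\theta t^{\theta-1}$ we have $\mathscr{M}(t)=at+bt^{\theta}$, hence $\mathscr{M}(\|u\|_{V,\mathcal{K}}^{N/s})\ge(a+b)\|u\|_{V,\mathcal{K}}^{\theta N/s}$ whenever $\|u\|_{V,\mathcal{K}}\le1$. Combining this with $(f_1)$, $(f_3)$, H\"older's inequality for $h\in L^{\frac{N\theta}{N\theta-sp}}(\mathbb{R}^N)$ and the embedding $W_{V,\mathcal{K}}^{s,N/s}(\mathbb{R}^N)\hookrightarrow L^{N\theta/s}(\mathbb{R}^N)$, I would obtain $\mathcal{I}_{\lambda,\mathcal{K}}(u)\ge G_\lambda(\|u\|_{V,\mathcal{K}})$ on the closed unit ball, with $G_\lambda(t)=c_1t^{\theta N/s}-c_2t^q-c_3\lambda t^p$ and $p<\theta N/s<q$, exactly as in \eqref{eq5.1}; its two smallest positive zeros $T_0(\lambda)<T_1(\lambda)$ satisfy $T_0(\lambda)\to0$ as $\lambda\to0^+$ by the elementary argument of Lemma~\ref{lem5.1}. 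I would then restrict $\lambda$ so that $T_0(\lambda)<\min\{1,\mathcal{K}_0^{s/N}(\alpha_{N,s}/\alpha_0)^{(N-s)/N}\}$, fix a cut-off $\Psi\in C_0^{\infty}([0,\infty))$ as in Section~\ref{sec5}, and set $J_{\lambda,\mathcal{K}}(u)=\frac{s}{N}\mathscr{M}(\|u\|_{V,\mathcal{K}}^{N/s})-\frac{\lambda}{p}\int_{\mathbb{R}^N}h|u|^p\,dx-\Psi(\|u\|_{V,\mathcal{K}})\int_{\mathbb{R}^N}F(x,u)\,dx$. As in Case~2 of Section~\ref{sec5} one checks, using the explicit form of $\mathscr{M}$, $a+b>0$ and $p<\theta N/s$, that $J_{\lambda,\mathcal{K}}(u)\ge0$ for $\|u\|_{V,\mathcal{K}}\ge1$ once $\lambda$ lies below a further explicit threshold, so that $J_{\lambda,\mathcal{K}}$ is coercive, bounded below, of class $C^1$, and symmetric in the sense needed by the genus theory; moreover $J_{\lambda,\mathcal{K}}(u)<0$ forces $\|u\|_{V,\mathcal{K}}<T_0(\lambda)$, where $J_{\lambda,\mathcal{K}}\equiv\mathcal{I}_{\lambda,\mathcal{K}}$ on a neighbourhood of $u$. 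I define $\widetilde{\lambda}_{**}\in(0,\widetilde{\lambda}_*]$ to be the minimum of the thresholds above.

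The local $(PS)_c$ condition for $c<0$ follows as in Lemmas~\ref{lem4.2} and~\ref{lem5.2}: a $(PS)_c$ sequence $\{u_n\}_n$ is bounded by coercivity, for large $n$ one has $\|u_n\|_{V,\mathcal{K}}<T_0(\lambda)$, hence $[u_n]_{s,N/s}\le\mathcal{K}_0^{-s/N}T_0(\lambda)<(\alpha_{N,s}/\alpha_0)^{(N-s)/N}$, so $\mathcal{I}_{\lambda,\mathcal{K}}(u_n)=J_{\lambda,\mathcal{K}}(u_n)$, $\mathcal{I}'_{\lambda,\mathcal{K}}(u_n)=J'_{\lambda,\mathcal{K}}(u_n)$, the exponential term is strictly subcritical, and the compact embedding into $L^{\nu}$, Lemma~\ref{lem2.2}, the elementary inequality for $|\xi|^{N/s-2}\xi$, and the boundedness of the linear functional built from the even part $\mathcal{K}_e$ of $\mathcal{K}$ yield a strongly convergent subsequence exactly as in Lemma~\ref{lem3.6}. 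With coercivity, boundedness below, symmetry and the local $(PS)_c$ for $c<0$ in hand, I would run the genus argument of Section~\ref{sec5} verbatim: for each $k\in\mathbb{N}$, using a $k$-dimensional subspace $E_k$ and the fact that $(\int_{\mathbb{R}^N}h|v|^p\,dx)^{1/p}$ is a norm on $E_k$, Lemma~\ref{lem5.3} produces $\epsilon_k>0$ with $\gamma\bigl(J_{\lambda,\mathcal{K}}^{-\epsilon_k}\bigr)\ge k$; the min-max levels $c_k:=\inf_{A\in\Xi_k}\sup_{u\in A}J_{\lambda,\mathcal{K}}(u)$ satisfy $-\infty<c_k\le-\epsilon_k<0$ and are critical values; and Lemma~\ref{lem5.4} together with the deformation lemma and Proposition~\ref{Prop5.1} shows that either the $c_k$ are pairwise distinct, or some $K_c$ with $c<0$ is infinite. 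In either case $J_{\lambda,\mathcal{K}}$ has infinitely many critical points with negative energy, which are critical points of $\mathcal{I}_{\lambda,\mathcal{K}}$, so \eqref{p2} has infinitely many solutions in $W_{V,\mathcal{K}}^{s,N/s}(\mathbb{R}^N)$ for every $\lambda\in(0,\widetilde{\lambda}_{**})$.

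The step I expect to be the main obstacle is making the several smallness windows for $\lambda$ mutually compatible in the degenerate case $a=0$: the cut-off must switch off the $F$-term while $\|u\|_{V,\mathcal{K}}$ is still below the Trudinger--Moser threshold $\mathcal{K}_0^{s/N}(\alpha_{N,s}/\alpha_0)^{(N-s)/N}$ and below $1$ (beyond which $\mathscr{M}(t)\ge(a+b)t^{\theta}$ fails), and it is $T_1(\lambda)$ rather than $T_0(\lambda)$ that governs where $G_\lambda$ turns positive; the whole scheme rests on $T_0(\lambda)\to0$ as $\lambda\to0^+$, which is precisely why one can only assert the conclusion for $\lambda<\widetilde{\lambda}_{**}$. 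A secondary, essentially bookkeeping difficulty is that throughout the $(PS)$ and Trudinger--Moser estimates one must keep the Gagliardo seminorm $[\cdot]_{s,N/s}$ (controlled via $(k_2)$) rather than $[\cdot]_{s,\mathcal{K}}$, and use the even part $\mathcal{K}_e$ when differentiating the energy along $\mathcal{L}_{\mathcal{K}}$.
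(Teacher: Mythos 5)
Your proposal is correct and follows essentially the same approach the paper itself indicates, namely transplanting the truncation-plus-genus scheme of Section~\ref{sec5} to the nonlocal operator via the kernel conditions $(k_1)$--$(k_2)$. Since the paper only states that one should employ ``the parallel approach as in Theorems~\ref{th1}--\ref{th3}'' without writing it out, the bookkeeping you supply --- controlling $[\cdot]_{s,N/s}$ by $\mathcal{K}_0^{-s/N}\|\cdot\|_{V,\mathcal{K}}$ so the Trudinger--Moser estimates of Lemmas~\ref{lem2.2}--\ref{lem2.3} apply, using the even part $\mathcal{K}_e$ when differentiating the energy, shrinking $T_0(\lambda)$ below both $1$ and the $\mathcal{K}_0$-adjusted exponential threshold, and invoking the compact embedding from Theorem~\ref{th2.1} --- is exactly what is intended.
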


\noindent
{\bf Acknowledgements}.
The authors would like to express their deep gratitude to  anonymous referees for their valuable suggestions and useful comments.
Mingqi Xiang was supported by the National Natural Science Foundation of China
(No. 11601515) and the Fundamental Research Funds for the Central Universities (No. 3122017080).
Binlin Zhang  was supported by the National Natural Science Foundation of China
(No. 11871199).
Du\v{s}an Repov\v{s}
 was supported by the Slovenian Research Agency grants  P1-0292, J1-8131, J1-7025, N1-0064, and N1-0083.

\end{document}